\documentclass{amsart}

\usepackage[T1]{fontenc}
\usepackage[utf8]{inputenc}

\usepackage[noadjust]{cite}
\usepackage{palatino}
\usepackage{hyperref}
\usepackage{amssymb,amsthm,units,mathabx}
\usepackage{color}
\usepackage[usenames,dvipsnames,svgnames,table]{xcolor}
\usepackage{mathtools,xparse,xspace}
\usepackage{rotating}
\usepackage{bm} 

\hypersetup{allbordercolors=red,linkcolor=red,urlcolor=blue,urlbordercolor=blue,colorlinks=false,pdfborder={0 0 1}} 

\usepackage{epsfig}  		
\usepackage{epic,eepic}       
\usepackage{tikz,pgf}
\usetikzlibrary{arrows,positioning}
\usepackage[abs]{overpic}

\textwidth = 6.0 in
\textheight = 8.87 in
\oddsidemargin = 0.25 in
\evensidemargin = 0.25 in
\topmargin = -0.25 in
\setlength{\parskip}{4pt}

\newtheorem{thm}{Theorem}[section]

\newtheorem{lem}[thm]{Lemma}
\newtheorem{cor}[thm]{Corollary}

\newtheorem{question}[thm]{Question}

\theoremstyle{definition}
\newtheorem{definition}[thm]{Definition}
\newtheorem{example}[thm]{Example}

\theoremstyle{remark}

\newtheorem{remark}[thm]{Remark}


\numberwithin{equation}{section}

\newcommand{\R}{\mathbb{R}}  
\newcommand{\Z}{\mathbb{Z}}  
\newcommand{\Q}{\mathbb{Q}}  
\newcommand{\F}{\mathbb{F}}  
\newcommand{\bd}{\partial}  

\newcommand{\chat}{\widehat{c}} 
\newcommand{\cplus}{c^+} 
\newcommand{\cplusred}{c^+_{\rm{red}}} 
\newcommand{\HFhat}{\widehat{{HF}}} 
\newcommand{\CFhat}{\widehat{{CF}}} 
\newcommand{\HFplus}{{{HF}}^+} 
\newcommand{\CFplus}{{{CF}}^+} 
\newcommand{\HFminus}{{{HF}}^-} 
\newcommand{\CFminus}{{{CF}}^-} 
\newcommand{\HFpm}{{{HF}}^{\pm}} 
\newcommand{\HFmp}{{{HF}}^{\mp}} 
\newcommand{\HFred}{{{HF}}_{\mathrm{red}}} 
\newcommand{\HFinfinity}{{{HF}}^{\infty}} 
\newcommand{\CFinfinity}{{{CF}}^{\infty}} 
\newcommand{\HF}{{{HF}}} 
\newcommand{\CF}{{{CF}}} 
\newcommand{\Spinc}{\mathrm{Spin}^{\it c}}
\newcommand{\VO}{{\rm V}} 
\newcommand{\DVO}{\widetilde{\rm V}} 



\newcommand{\spins}{\mathfrak{s}}

\DeclareMathOperator{\im}{im}
\DeclareMathOperator{\rank}{rank}
\DeclareMathOperator{\coker}{coker}

\begin{document}

\begin{abstract}
We investigate the line between tight and overtwisted for surgeries on fibred transverse knots in contact 3-manifolds.  When the contact structure $\xi_K$ is supported by the fibred knot $K \subset M$, we obtain a characterisation of when negative surgeries result in a contact structure with non-vanishing Heegaard Floer contact class.  To do this, we leverage information about the contact structure $\xi_{\overline{K}}$ supported by the mirror knot $\overline{K} \subset -M$.  We derive several corollaries about the existence of tight contact structures, L-space knots outside $S^3$, non-planar contact structures, and non-planar Legendrian knots.
\end{abstract}

\title{Tight Contact Structures via Admissible Transverse Surgery}
\author{James Conway}
\address{University of California, Berkeley}
\email{conway@berkeley.edu}
\maketitle

\section{Introduction}

Contact geometers have long been fascinated with the subtle art of distinguishing tight contact structures from overtwisted one: the border between these two categories is poorly understood.  In this paper, we investigate one manifestation of this problem: when does transverse surgery on transverse knots in overtwisted contact structures result in a tight contact structure?

We focus on fibred knots $K \subset M$, where the fibre surface is a (rational) Seifert surface for $K$; to such a knot is naturally associated a contact structure $\xi_K$, turning $K$ into a transverse knot.  Both null-homologous fibred knots and rationally fibred knots ({\it ie.\ }rationally null-homologous and fibred) support a unique contact structure up to isotopy (see \cite{Giroux:OBD, BEVHM}).

What draws our attention to such knots is a result of Etnyre and Vela-Vick (\cite{EVV:torsion}), namely, that $\xi_K$ restricted to the complement of $K$ is always tight, even when $(M, \xi_K)$ is overtwisted.  Following the guiding principle that ``removing twisting'' of the contact planes increases the chances of being tight, we will investigate the operation of \textit{admissible transverse surgery}: this operation defines a contact structure on $M_r(K)$ that has ``less twisting'' of the contact planes than the original contact structure.

We know that this operation has a hope of succeeding, because when we remove a ``sufficient amount'' of twisting, it is known that this operation will result in a tight contact structure (\textit{cf.\@} Corollary~\ref{FDTC > 1 tight}). We will look at what we can say when an arbitrary small amount of twisting is removed.  In this setting, the more negative the surgery coefficient, the less twisting is removed.

To make the above more precise, we say a few words about how the topological operation of surgery interacts with the contact geometry. Topological $r$-surgery on a null-homologous fibred knot $K$ induces a rational fibred knot $K_r$ in $M_r(K)$, for $r \neq 0$.  The contact structure $\xi_{K_r}$ on $M_r(K)$ induced by the fibred knot $K_r$ was determined by Baker, Etnyre, and van Horn-Morris (in \cite{BEVHM}) (for $r < 0$) and by the author (\cite{Conway}) (for $r > 0$) to be the contact structure obtained by admissible (resp.\@ inadmissible) transverse $r$-surgery on $K$ in $(M,\xi_K)$.  In the cases we will consider, admissible (resp.\@ inadmissible) transverse surgery on $K$ corresponds to negative (resp.\@ positive) contact surgery on a Legendrian approximation of $K$ (this is not true in general; see \cite{BE:transverse}).

The main result of this paper is a characterisation of the Heegaard Floer contact invariant of the contact structure $\xi_{K_r}$, for $r < 0$. In particular, we will be concerned with $\cplus(\xi_r(K)) \in \HFplus(-M_r(K))$ and its image $\cplusred(\xi_{K_r})$ in $\HFplus(-M_r(K)) \to \HFred(-M_r(K))$, defined by Ozsváth and Szabó in \cite{OS:contact}.  Among other properties, the non-vanishing of either of these classes implies that the contact structure $\xi_{K_r}$ is tight.

To state the result, we also need to consider the ``mirror'' of $K$ as a fibred knot in $-M$ (that is, $M$ with reversed orientation).  To emphasise the ambient manifold, we will denote this knot as $\overline{K} \subset -M$, and the supported contact structure on $-M$ as $\xi_{\overline{K}}$.

\begin{thm}\label{maintheorem}
If $(M_r(K), \xi_{K_r})$ is the result of admissible transverse $r$-surgery on $K \subset (M, \xi_K)$, then:
\begin{enumerate}
\item $c^+(\xi_{K_r}) \neq 0$ for all $r < 0$ if and only if $c^+_{\rm{red}}(\xi_{\overline{K}}) = 0$.
\item $c^+_{\rm{red}}(\xi_{K_r}) \neq 0$ for all $r < 0$ if and only if $c^+(\xi_{\overline{K}}) = 0$.
\end{enumerate}
\end{thm}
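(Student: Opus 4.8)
The plan is to trade the whole family $\{(M_r(K),\xi_{K_r})\}_{r<0}$ for a single statement about the complement of $K$, and then to exploit the orientation-reversing identification $-M_r(K)\cong(-M)_{-r}(\overline K)$, under which large \emph{positive} surgeries on $\overline K\subset -M$ appear, so as to read off $\cplus(\xi_{\overline K})$ and $\cplusred(\xi_{\overline K})$. I will work on the side $\HFplus(-M_r(K))$ where the contact classes live and keep careful track of the $U$-module structure, since ``$\cplus\neq0$'' versus ``$\cplusred\neq0$'' is exactly ``nonzero in $\HFplus$'' versus ``nonzero in $\HFred$''.

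\emph{Step 1: reducing the family to one stable class.} By \cite{BEVHM,Conway}, admissible transverse $r$-surgery on $K$ for $r<0$ is negative contact surgery on a Legendrian approximation of $K$, and passing to a more negative coefficient is one further Legendrian surgery (after a stabilisation). Feeding this into the naturality of the Ozsv\'ath--Szab\'o contact class (\cite{OS:contact}) under Legendrian-surgery cobordism maps, and into the surgery exact triangle relating consecutive coefficients to the trivial surgery on $M$, I obtain a directed system of maps among the $\HFplus(-M_r(K))$ intertwining the classes $\cplus(\xi_{K_r})$ (and $\chat$, $\cplusred$). Since less twisting is removed as $r$ decreases, and near $r=0^-$ enough is removed that tightness is automatic (Corollary~\ref{FDTC > 1 tight}), I expect this to collapse ``$\cplus(\xi_{K_r})\neq0$ for all $r<0$'' (resp.\ the $\cplusred$ statement) to the same assertion for all sufficiently negative integers $r=-n$, and hence --- via the Ozsv\'ath--Szab\'o mapping cone formula for integer surgeries --- to the non-vanishing of the image of a single fixed class (the transverse invariant $\LOSS(K)$, say, pushed forward to $\HFplus$) in the summand $H_*(A^+)$ of that mapping cone associated to the distinguished $\Spinc$ structure.

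\emph{Step 2: large surgery and the mirror.} For $n\gg0$ the large surgery formula identifies that summand of $\HFplus(-M_{-n}(K))$ with $H_*(A^+)$, where $A^+$ is now built from $\CFKinfinity(-M,\overline K)$ using $-M_{-n}(K)\cong(-M)_n(\overline K)$. Under the mirror duality $\CFKinfinity(-M,\overline K)\cong\CFKinfinity(M,K)^{\vee}$, which interchanges the two filtrations (equivalently, the subcomplex and quotient complex of $\CFKinfinity$), the stable class from Step 1 should be matched with the class in $\HFKhat$ of $\overline K$ in extremal Alexander grading --- precisely the class whose image in $\HFhat(-(-M))$ is $\chat(\xi_{\overline K})$ by the fibred-knot description of the contact invariant. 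Whether that class survives to $\HFplus$ of the surgery, only to its reduced quotient $\HFred$, or dies is then governed by the trichotomy $\cplus(\xi_{\overline K})\neq0$ / $\cplusred(\xi_{\overline K})\neq0$ / $\cplus(\xi_{\overline K})=0$, and the duality interchanging subcomplex and quotient is exactly what swaps the roles of $\cplus$ and $\cplusred$ between $K$ and $\overline K$: this turns non-vanishing of $\cplus$ across the whole $K$-family into vanishing of $\cplusred(\xi_{\overline K})$, which is (1), and non-vanishing of $\cplusred$ across the family into vanishing of $\cplus(\xi_{\overline K})$, which is (2). As a sanity check, $\cplus(\xi_{\overline K})=0$ then forces both families of contact classes to be nowhere vanishing, consistent with the tightness already known when enough twisting is removed.

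The hard part will be Step 1: making the naturality of $\cplus$, and especially of $\cplusred$, uniform in $r$ --- handling non-integer coefficients and the stabilisations of the Legendrian approximation, pinning down the directions of the cobordism maps so that ``nonzero for all $r<0$'' genuinely reduces to one stable statement rather than an $r$-dependent condition, and carrying enough of the $U$-module structure through the mapping-cone and large-surgery identifications that the $\cplus$-versus-$\cplusred$ distinction is preserved. Granting that, Step 2 is the large surgery formula together with the fibred-knot formula for the contact class, and assembling (1) and (2) is a diagram chase in the long exact sequences relating $\HFhat$, $\HFplus$, $\HFminus$, $\HFinfinity$ and $\HFred$.
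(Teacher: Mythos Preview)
Your two-step outline matches the paper's strategy: reduce the whole family $\{r<0\}$ to the statement for sufficiently negative integers, then read off the answer from the large-surgery formula for $\overline K\subset -M$ together with the Kronecker-pairing characterisation of $\HFred$. Where you diverge is in the assessment of difficulty. You flag Step~1 as ``the hard part'' and propose to build a directed system via cobordism maps, surgery triangles, and the mapping cone; the paper dispatches Step~1 in one line using Lemma~\ref{lemma:admissible preserves non-vanishing}: admissible transverse surgery (on any transverse knot) preserves non-vanishing of $\cplus$ and $\cplusred$, proved by open-book methods (Baldwin's capping-off map and the monoid property). Since $\xi_{K_r}$ for $-n<r<0$ is obtained from $\xi_{K_{-n}}$ by admissible surgery on $K_{-n}$, vanishing at any $r<0$ forces vanishing at every more negative integer, and the reduction is immediate. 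Your appeal to Corollary~\ref{FDTC > 1 tight} here is circular (in this paper it is a corollary of the theorem you are proving), and even if you cite the original Honda--Kazez--Mati\'c result, it only handles $r=-1/n$ and you would still need something like Lemma~\ref{lemma:admissible preserves non-vanishing} to cover the remaining $r$.

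By contrast, what you treat as routine in Step~2 is where the paper actually works. The identification of $\cplus(\xi_{K_{-n}})$ inside $H_*(A^+)$ and of its ``reduced'' status via duality is not just the large-surgery formula plus a slogan about subcomplex/quotient swapping; the paper sets up explicit short exact sequences of sub- and quotient-complexes of $\mathcal C_{\overline K}$ (for~(1)) and of $\mathcal C_K$ (for~(2)), takes the associated long exact sequences, and does a concrete diagram chase using Lemma~\ref{HF duality} to translate ``$\cplusred(\xi_{\overline K})\neq 0$'' into the existence of a preimage $y\in\widehat\pi^{-1}(x)$ with $\widehat\iota(y)=0$. Your sketch has the right ingredients (extremal Alexander grading, duality pairing, the $\HFhat/\HFplus/\HFred$ long exact sequences) but stops short of this chase, and in particular does not make precise why the $\cplus$/$\cplusred$ dichotomy for $\overline K$ governs exactly whether $\iota_S(x)$ vanishes. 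That is the genuine content; once you write it out, the mapping-cone and $\LOSS$ machinery you invoke is unnecessary.
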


This theorem will proved in slightly more generality (taking surgery rationally fibred knots into account) as Theorem~\ref{rationalmaintheorem}, and in Corollary~\ref{Rplus(red)}, we look at the particular values of $r$ at which the contact invariants change from zero to non-zero.

Ozsváth and Szabó took the first steps toward this result: in \cite{OS:contact}, they showed that when $\HFred(M) = 0$, then $\cplus(\xi_{K_{-1}}) \neq 0$ for any fibred knot $K \subset M$.  Since $\HFred(M) = 0$ implies that $\cplusred(\xi_{\overline{K}}) = 0$, our result extends this to $\xi_{K_r}$, for all $r < 0$.  Similar results appear by Hedden and Plamenevskaya in \cite{HP}, in the context of L-space surgeries.  In \cite{HM}, Hedden and Mark investigated $\cplus(\xi_{K_r})$ for $r = -1/n$, in cases where Theorem~\ref{maintheorem} gives no information. It is always true that $\cplus(\xi_{K_{-1/n}})\neq 0$ for sufficiently large positive integers $n$ (see Corollary~\ref{FDTC > 1 tight}), and they determined upper bounds on the minimum such $n$ (\textit{cf.\@} Corollary~\ref{Rplus(red)}).

\begin{example}
For an example of how Theorem~\ref{maintheorem} can obviate the need for detailed calculation of Heegaard Floer groups, we show that the open book $(\Sigma, \phi)$ in Figure~\ref{genus 2 example} supports a tight contact structure with $\cplusred \neq 0$, for any integers $k_1, k_2 \geq 2$.  Let $K$ be the binding of this open book, and consider the open book $(\Sigma,\phi^{-1}\circ \tau_{\bd})$ for inadmissible transverse $(-1)$-surgery on $\overline{K}$, where $\tau_{\bd}$ is a boundary parallel Dehn twist.  This latter open book is a stabilisation of one considered in \cite[Section~4]{BE:transverse}, which was shown to support a contact structure with $\cplus = 0$.  By Theorem~\ref{maintheorem}, we conclude that $(\Sigma,\phi)$ supports a tight contact structure with $\cplusred \neq 0$.

\begin{figure}[htbp]
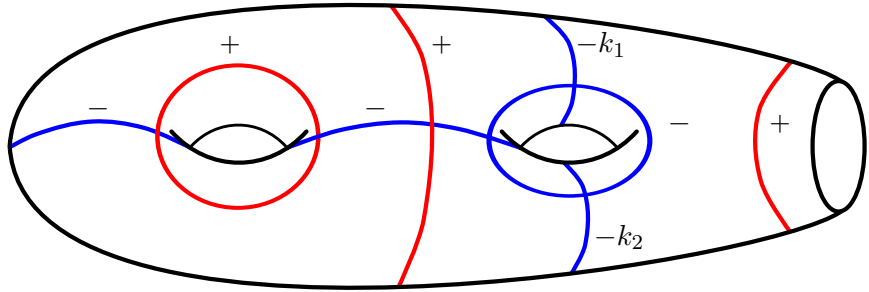

\begin{center}
\begin{overpic}[scale=2,tics=20]{"genus2example"}
\put(30,66){\large $-$}
\put(80,90){\large $+$}
\put(135,66){\large $-$}
\put(160,90){\large $+$}
\put(215,90){\large $-k_1$}
\put(222,17){\large $-k_2$}
\put(250,60){\large $-$}
\put(288,60){\large $+$}
\end{overpic}
\caption{A genus-2 open book supporting a tight contact structure with $\cplusred \neq 0$, for any integers $k_1, k_2 \geq 2$.}
\label{genus 2 example}
\end{center}
\end{figure}
\end{example}

We present several consequences of Theorem~\ref{maintheorem}.  First, we can recover a familiar result of Honda, Kazez, and Matić.  The fractional Dehn twist coefficient is a measure of how much a mapping class ``twists'' arcs with endpoints on the boundary components of an open book.

\begin{cor}[\cite{HKM:right2}]\label{FDTC > 1 tight}
Let $(\Sigma, \phi)$ be an open book decomposition, where $\Sigma$ has one boundary component.  If the fractional Dehn twist coefficient (FDTC) of $\phi$ around $\bd \Sigma$ is greater than $1$, then the supported contact structure is tight with $\cplusred \neq 0$.
\end{cor}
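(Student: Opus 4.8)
The plan is to realise the contact structure $\xi$ supported by $(\Sigma,\phi)$ as the result of an admissible transverse \emph{negative} surgery on a null-homologous fibred knot whose mirror supports an overtwisted contact structure, and then to feed this into Theorem~\ref{maintheorem}(2). Throughout, write $c(\cdot)$ for the fractional Dehn twist coefficient around $\bd\Sigma$.

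First I would peel off one boundary-parallel Dehn twist. Write $\phi=\psi\circ\tau_{\bd}^{\epsilon}$, where $\epsilon\in\{+1,-1\}$ is the sign for which, under the correspondence of \cite{BEVHM,Conway}, admissible transverse $(-1)$-surgery on the binding of an open book has the effect of adding $\tau_{\bd}^{\epsilon}$ to the monodromy. Let $K'\subset M'$ be the binding of $(\Sigma,\psi)$; since $\Sigma$ has connected boundary, $K'=\bd\Sigma$ bounds $\Sigma$ and so is null-homologous, hence $K'$ is a genuinely fibred knot and Theorem~\ref{maintheorem} applies to it directly (not merely its rational refinement Theorem~\ref{rationalmaintheorem}). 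By construction, admissible transverse $(-1)$-surgery on $K'\subset(M',\xi_{K'})$ produces the open book $(\Sigma,\phi)$, so $\xi_{K'_{-1}}=\xi$; note the surgery coefficient $-1$ is negative. Because $c(\cdot)$ is homogeneous and changes additively under composition with $\tau_{\bd}$, we obtain $c(\psi)=c(\phi)-\epsilon\geq c(\phi)-1>0$.

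Next I would identify $\xi_{\overline{K'}}$. Passing to the mirror replaces the monodromy $\psi$ by $\psi^{-1}$ (up to conjugation and reversal of the page orientation, neither of which affects tightness), and $c(\psi^{-1})=-c(\psi)<0$. By the Honda--Kazez--Matić criterion, an open book whose monodromy has negative fractional Dehn twist coefficient around a boundary component supports an overtwisted contact structure; hence $\xi_{\overline{K'}}$ is overtwisted, and therefore $c^+(\xi_{\overline{K'}})=0$ (and likewise $\cplusred(\xi_{\overline{K'}})=0$). Now Theorem~\ref{maintheorem}(2) applied to $K'$ turns $c^+(\xi_{\overline{K'}})=0$ into $\cplusred(\xi_{K'_r})\neq 0$ for all $r<0$; in particular $\cplusred(\xi_{K'_{-1}})=\cplusred(\xi)\neq 0$. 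Since $\cplusred$ is the image of $\cplus$ under $\HFplus\to\HFred$, this forces $\cplus(\xi)\neq 0$, and hence $\xi$ is tight.

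The only genuine friction is the bookkeeping in the first two steps. I expect the main obstacle to be pinning down the sign $\epsilon$ and, more importantly, checking that the surgery on $K'$ that recreates $(\Sigma,\phi)$ really is an admissible transverse surgery at a \emph{negative} coefficient (the transverse-surgery coefficients of \cite{BEVHM,Conway} are not the topological ones, so this requires care), together with confirming that mirroring negates the boundary fractional Dehn twist coefficient under the orientation conventions in which the Honda--Kazez--Matić overtwistedness criterion is phrased. Once these normalisations are nailed down, the Heegaard Floer content — vanishing of the contact invariant for overtwisted structures and Theorem~\ref{maintheorem} itself — supplies the rest.
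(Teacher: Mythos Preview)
Your proposal is correct and is essentially the paper's own argument, just reorganised: the paper keeps $K$ as the binding of $(\Sigma,\phi)$, observes that $(\Sigma,\phi^{-1}\circ\tau_{\bd})$ (which supports $\xi_{\overline{K}_{-1}}$) has FDTC $<0$ and hence is overtwisted, and then invokes the contrapositive of part~(1) of Theorem~\ref{rationalmaintheorem} applied to $\overline{K}$ --- which is exactly your application of part~(2) to $K'$ with $\psi=\phi\circ\tau_{\bd}^{-1}$, since $\psi^{-1}=\phi^{-1}\circ\tau_{\bd}$ (remember $\tau_{\bd}$ is central). Your worries about the sign $\epsilon$ and the surgery coefficient are settled in the paper by \cite{BEVHM} and \cite{KR}: admissible transverse $(-1)$-surgery adds one positive $\tau_{\bd}$, and $\mathrm{FDTC}$ changes by $+1$ accordingly.
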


Their proof involved the construction first of a taut foliation which was then perturbed to a tight contact structure. Our proof does not require passing via taut foliations, nor does it prove their existence (in that sense, we do not recover the full extent of their result).

We call $K \subset M$ an L-space knot if $M_r(K)$ is an L-space for some $r > 0$, \textit{ie.\@} $\big|H_1(M_r(K))\big| = \rank \HFhat(M_r(K))$.  In $S^3$, such knots are known to be fibred (\cite{Ni}) and support the tight contact structure (\cite{Hedden:positivity}).  Outside of $S^3$, not all L-space knots are fibered.  However, when they \textit{are} fibered, we generalise Hedden's result from \cite{Hedden:positivity}.

\begin{cor}\label{L-space knot}
If $K \subset M$ is a fibered L-space knot, then $\cplus(\xi_K) \neq 0$.
\end{cor}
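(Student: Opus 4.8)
The plan is to deduce this from Theorem~\ref{maintheorem} --- more precisely from its rational refinement Theorem~\ref{rationalmaintheorem} --- applied not to $K$ but to its mirror $\overline{K} \subset -M$. Write $\overline{K}_r \subset (-M)_r(\overline{K})$ for the (rationally) fibred knot induced by topological $r$-surgery on $\overline{K}$, and $\xi_{\overline{K}_r}$ for the contact structure it supports; for $r < 0$ this is exactly the contact structure obtained by admissible transverse $r$-surgery on $\overline{K} \subset (-M, \xi_{\overline{K}})$. Since the mirror of $\overline{K}$ is $\overline{\overline{K}} = K$, part~(2) of the theorem applied to $\overline{K}$ reads: $\cplusred(\xi_{\overline{K}_r}) \neq 0$ for all $r < 0$ if and only if $\cplus(\xi_K) = 0$. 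So it is enough to produce a single slope $r < 0$ for which $\cplusred(\xi_{\overline{K}_r}) = 0$.

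By hypothesis $K$ is fibred and there is an $r_0 > 0$ with $M_{r_0}(K)$ an L-space; the mirror $\overline{K} \subset -M$ is then a fibred (possibly only rationally fibred) knot of the type to which the theorem applies. Reversing orientations gives $(-M)_{-r_0}(\overline{K}) = -\,M_{r_0}(K)$, and since the L-space condition $|H_1| = \rank \HFhat$ is unchanged under orientation reversal, $(-M)_{-r_0}(\overline{K})$ is again an L-space. As $-r_0 < 0$, the contact manifold $\big((-M)_{-r_0}(\overline{K}), \xi_{\overline{K}_{-r_0}}\big)$ is defined, and its reduced contact class satisfies $\cplusred(\xi_{\overline{K}_{-r_0}}) \in \HFred\big(-(-M)_{-r_0}(\overline{K})\big) = \HFred\big(M_{r_0}(K)\big) = 0$. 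Thus $\cplusred(\xi_{\overline{K}_{-r_0}}) = 0$, and the first paragraph gives $\cplus(\xi_K) \neq 0$.

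I do not expect a genuine obstacle here: the corollary is a short logical consequence of the main theorem, the only extra input being the trivial remark that the reduced contact invariant of a contact structure supported on (the orientation reverse of) an L-space must vanish. The two points that want a word of care are that the mirror of a fibred, respectively rationally fibred, knot is again of the same kind --- so that the theorem really does apply to $\overline{K}$ --- and that one should cite the rational version Theorem~\ref{rationalmaintheorem}, since the L-space slope $r_0$ need not be an integer and $\overline{K}$ need not be null-homologous.
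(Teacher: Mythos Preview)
Your argument is correct and is essentially the same as the paper's: observe that $(-M)_{-r_0}(\overline{K}) = -M_{r_0}(K)$ is an L-space, so $\cplusred(\xi_{\overline{K}_{-r_0}})$ lives in a trivial group and hence vanishes, and then invoke part~(2) of Theorem~\ref{rationalmaintheorem} applied to $\overline{K}$ to conclude $\cplus(\xi_K) \neq 0$. The paper's proof is the same two-line deduction, only more tersely stated.
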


The support genus of a contact structure is defined in \cite{EO:invariants} to be the minimal genus of an open book supporting the contact structure.  Ozsváth, Stipsicz, and Szabó showed in \cite{OSS:planar} that if $\cplusred(\xi) \neq 0$, then $\xi$ is non-planar, \textit{ie.\@} cannot be supported by an open book with planar pages.

\begin{cor}\label{non-planar contact structures}
If $K \subset M$ is fibred, and $\cplus(\xi_{\overline{K}}) = 0$, then $(M_r(K), \xi_{K_r})$ is non-planar for all $r < 0$.
\end{cor}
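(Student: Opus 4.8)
The plan is to read this off directly from part~(2) of Theorem~\ref{maintheorem} combined with the non-planarity criterion of Ozsváth, Stipsicz, and Szabó recalled just above; essentially no new Floer-theoretic input is needed, since all the work has been packaged into Theorem~\ref{maintheorem}.

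First I would fix $r < 0$ and recall, from the discussion preceding Theorem~\ref{maintheorem}, that topological $r$-surgery on the fibred knot $K$ produces a (rationally) fibred knot $K_r \subset M_r(K)$, which supports a well-defined contact structure $\xi_{K_r}$; so the statement makes sense and Theorem~\ref{maintheorem} is applicable. Next, the hypothesis $\cplus(\xi_{\overline{K}}) = 0$ is exactly the right-hand condition appearing in part~(2) of Theorem~\ref{maintheorem}, so that theorem yields $\cplusred(\xi_{K_r}) \neq 0$ in $\HFred(-M_r(K))$ for every $r < 0$. Since $\cplusred(\xi_{K_r})$ is then a nonzero class in $\HFred$, the theorem of Ozsváth--Stipsicz--Szabó (that $\cplusred(\xi) \neq 0$ forces $\xi$ to be non-planar) applies to each $(M_r(K), \xi_{K_r})$, giving the claim for all $r<0$ simultaneously. (One also obtains $\cplus(\xi_{K_r}) \neq 0$ from part~(1), since $\cplus(\xi_{\overline{K}}) = 0$ implies $\cplusred(\xi_{\overline{K}}) = 0$, but non-planarity genuinely needs the stronger fact that the \emph{reduced} class survives.)

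The substantive content is entirely inside Theorem~\ref{maintheorem}; once that is in hand, the only things to be careful about are bookkeeping points: that the hypothesis on $\overline{K}$ is matched to the correct clause (part~(2), not part~(1)), that ``for all $r < 0$'' is indeed the conclusion of that clause so the non-planarity is uniform in the surgery coefficient, and that the non-integer coefficients are covered by the rationally-fibred refinement Theorem~\ref{rationalmaintheorem}. I would also flag explicitly that this argument gives only non-planarity and does not pin down the support genus of $\xi_{K_r}$.
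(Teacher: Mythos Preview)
Your proposal is correct and follows essentially the same argument as the paper: apply part~(2) of Theorem~\ref{maintheorem} (or its rational refinement Theorem~\ref{rationalmaintheorem}) to obtain $\cplusred(\xi_{K_r}) \neq 0$ for all $r<0$, then invoke the Ozsv\'ath--Stipsicz--Szab\'o non-planarity criterion. The paper's proof is the two-line version of exactly this.
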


Similarly, consider the support genus of a Legendrian knot $L \subset (M, \xi)$, \textit{ie.\@} the minimal genus open book supporting $\xi$ such that $L$ sits on a page, defined in \cite{Onaran}.  This next corollary generalises a result of Li and Wang (\cite{LW}).

\begin{cor}\label{non-planar Legendrian knots}
If $K \subset M$ is fibred, and $\cplus(\xi_{\overline{K}}) = 0$, and $L$ is a Legendrian approximation of $K$, then neither $L$ nor $-L$ sit on the page of a planar open book supporting $\xi_K$, where $-L$ is $L$ with the orientation reversed.
\end{cor}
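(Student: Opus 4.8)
The plan is to reduce the statement about the Legendrian knot $L$ to the statement about the contact structure $\xi_{K_r}$ that we already control via Corollary~\ref{non-planar contact structures}, using the fact that admissible transverse surgery on $K$ is realised by negative contact surgery on a Legendrian approximation $L$. First I would recall the Onaran-style support genus of a Legendrian knot: if $L$ sits on a page of a planar open book $(\Sigma,\phi)$ supporting $\xi_K$, then stabilising $L$ (Legendrian stabilisation) still leaves it on a page of a planar open book, and performing negative contact surgery (i.e.\ admissible transverse surgery) along $L$ amounts to a positive-boundary-twist modification of $(\Sigma,\phi)$ that does \emph{not} increase the genus of the page. Concretely, a $(-1)$-contact surgery on a Legendrian knot sitting on a page is realised by adding a Dehn twist along a curve on that page, and a general negative-contact-surgery coefficient is obtained by first Legendrian-stabilising $L$ (which keeps it planar) and then doing $(-1)$-surgeries; in each case the page stays planar. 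Hence: if $L$ sat on a planar page supporting $\xi_K$, then for every $r<0$ the contact structure $\xi_{K_r}$ obtained by admissible transverse $r$-surgery on $K$ would be planar.

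Next I would invoke the hypothesis $\cplus(\xi_{\overline K}) = 0$. By Corollary~\ref{non-planar contact structures}, this forces $(M_r(K),\xi_{K_r})$ to be non-planar for all $r<0$. This contradicts the conclusion of the previous paragraph, so $L$ cannot sit on the page of any planar open book supporting $\xi_K$.

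For the statement about $-L$, the point is that reversing the orientation of a Legendrian knot does not change the \emph{smooth} knot type (it is still a Legendrian approximation of $K$ as an unoriented knot), and the contact-surgery formula that realises admissible transverse surgery as negative contact surgery depends only on the contact framing, not on the orientation of $L$. So the same argument applies verbatim with $L$ replaced by $-L$: were $-L$ on a planar page of an open book supporting $\xi_K$, the resulting $\xi_{K_r}$ would again be planar for all $r<0$, contradicting Corollary~\ref{non-planar contact structures}. (One should note here that the transverse pushoff of $L$ and of $-L$ need not be transversely isotopic, but both are negatively transversely isotopic to $K$ after sufficiently many stabilisations, which is all that the surgery identification of \cite{BEVHM, Conway} requires.)

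The main obstacle I anticipate is the first paragraph: making precise that negative contact surgery on a Legendrian knot lying on a planar page produces an open book that is again planar and still supports the surgered contact structure. The subtlety is that a general negative surgery coefficient is not a single $(-1)$-surgery, so one must first Legendrian-stabilise $L$ and check that the stabilised knot still lies on a planar page of an open book supporting $\xi_K$; this is where I would be most careful, citing the Onaran/Etnyre--Ozbagci open-book surgery machinery and the classification of supporting open books under stabilisation. Everything else is a formal combination of Corollary~\ref{non-planar contact structures} with the surgery-identification results quoted in the introduction.
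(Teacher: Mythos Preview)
Your proposal is correct and follows essentially the same route as the paper: negative contact surgery on $L$ realises some admissible transverse $r$-surgery on $K$, and if $L$ sat on a planar page the surgered contact structure would remain planar, contradicting Corollary~\ref{non-planar contact structures}. The paper simply cites \cite[Theorem~5.10]{Onaran} for the ``surgery on a planar page stays planar'' step rather than spelling out the stabilisation mechanism, but your sketch of that mechanism is accurate.

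The one place where you diverge is the treatment of $-L$. You argue that contact surgery depends only on the contact framing and not on the orientation of $L$, so the same contradiction applies; this is correct, though your parenthetical about transverse pushoffs being ``negatively transversely isotopic to $K$ after sufficiently many stabilisations'' is unnecessary and slightly muddled. The paper's argument is more direct: if $-L$ lies on a page of some open book, then reversing orientation shows $L$ lies on the same page, and you have already ruled that out. Both arguments are valid; the paper's avoids any discussion of what surgery on $-L$ produces.
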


In particular, no Legendrian approximation of a torus knot (or more generally, any non-trivial fibered strongly quasi-positive knot, by \cite{Hedden:positivity}) with maximal Thurston--Bennequin number sits on the page of a planar open book for $(S^3, \xi_{\rm{std}})$.

We can use the full generality of Theorem~\ref{rationalmaintheorem} to show that sufficiently large inadmissible transverse surgery on a (rationally) null-homologous fibred knot $K \subset (M, \xi_K)$ preserves the non-vanishing of $\cplusred$.

\begin{cor}\label{cplusred preserved}
Let $K \subset M$ be a (rationally) null-homologous fibred knot, and fix a framing of $K$.  If $\cplusred(\xi_K) \neq 0$, then $\cplusred(\xi_{K_r}) \neq 0$ for all sufficiently large $r$ (measured with respect to the fixed framing).
\end{cor}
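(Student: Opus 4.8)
The plan is to reduce Corollary~\ref{cplusred preserved} to part~(2) of Theorem~\ref{maintheorem} (more precisely its rational extension, Theorem~\ref{rationalmaintheorem}) by a duality argument, exploiting that ``sufficiently large $r$'' surgery on $K$ is the mirror of ``all $r' < 0$'' surgery on $\overline{K}$. First I would set up the mirroring dictionary: a (rationally) null-homologous fibred knot $K \subset M$ with a fixed framing has a mirror $\overline{K} \subset -M$, and topological $r$-surgery on $K$ with respect to the chosen framing corresponds to topological $(-r)$-surgery on $\overline{K}$ (with the mirrored framing), with an orientation-reversing diffeomorphism $M_r(K) \cong -\big((-M)_{-r}(\overline{K})\big)$. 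Under this identification the induced rationally fibred knots match up, so the contact structures satisfy $\xi_{K_r} = \xi_{\overline{(\overline{K})}_{-r}}$ in the appropriate sense, and in particular for $r > 0$ the contact manifold $(M_r(K), \xi_{K_r})$ is the orientation reversal of an \emph{admissible} transverse surgery on $\overline{K} \subset -M$ with negative coefficient $-r$.

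Next I would apply Theorem~\ref{rationalmaintheorem}(2) to the knot $\overline{K} \subset -M$ in place of $K$: its mirror is $\overline{\overline{K}} = K \subset M$, so the statement reads ``$\cplusred$ of admissible transverse $r'$-surgery on $\overline{K}$ is non-zero for all $r' < 0$ if and only if $\cplus(\xi_K) = 0$.'' The hypothesis of the corollary is $\cplusred(\xi_K) \neq 0$, which in particular forces $\cplus(\xi_K) \neq 0$. So the ``all $r' < 0$'' conclusion of part~(2) \emph{fails}, and likewise part~(1) applied to $\overline{K}$ would give $\cplus(\xi_{\overline{\overline{K}}_{r'}}) \ne 0$ for all $r' < 0$ — but this is the wrong direction, and here is where I need to be careful about what ``sufficiently large $r$'' buys us: rather than using the iff statements directly, I would instead invoke Corollary~\ref{Rplus(red)} (the refinement pinpointing the values of $r$ at which the invariants switch), which should say that for admissible transverse surgery on $\overline{K}$ the class $\cplusred$ is non-zero for all $r'$ below some threshold $R^+_{\rm red}(\overline{K}) \le 0$ precisely when $\cplus(\xi_K) = 0$, and in general is non-zero for all $r'$ sufficiently negative unless some boundedness obstruction occurs. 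Mirroring back, ``$r'$ sufficiently negative'' becomes ``$r$ sufficiently large,'' and $\cplusred$ of the surgery on $\overline{K}$ pulls back to $\cplusred(\xi_{K_r})$ under the orientation-reversing identification of $\HFplus$ and $\HFred$.

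Concretely, the argument structure is: (i) translate the framed long-surgery problem on $K$ into a negative-coefficient admissible transverse surgery problem on $\overline{K}$ via the mirror correspondence of \cite{BEVHM} and Section~\ref{} of this paper; (ii) observe $\cplusred(\xi_K)\ne 0 \Rightarrow \cplus(\xi_K)\ne 0$, so the hypothesis of Theorem~\ref{rationalmaintheorem}(2) applied with $\overline K$ as the base knot is satisfied, where now the roles of $K$ and $\overline{K}$ are interchanged and the ``non-planarity/boundedness'' side condition is exactly $\cplus$ of the \emph{original} knot's supported structure; (iii) read off from the quantitative version (Corollary~\ref{Rplus(red)}) that the switch to non-vanishing of $\cplusred$ happens at a finite threshold, so it holds for all sufficiently negative $r'$, i.e., all sufficiently large $r$. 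The main obstacle I anticipate is bookkeeping: matching orientations, $\Spinc$ structures, and the precise framing conventions so that $\cplusred$ really does correspond to $\cplusred$ under the mirror, and making sure the rational-coefficient version of the mirror correspondence (long positive surgery on a rationally fibred knot versus admissible transverse surgery on its mirror) is stated in the generality needed — the contact-geometric content is entirely carried by Theorem~\ref{rationalmaintheorem}, so the work here is purely in correctly dualising its statement.
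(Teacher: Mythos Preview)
Your mirroring dictionary breaks down at the crucial step. For $r$ larger than the page slope, $\xi_{K_r}$ is \emph{inadmissible} transverse $r$-surgery on $K$ (Theorem~\ref{thm:OBD surgery}), and while the underlying smooth manifold $M_r(K)$ is orientation-reversingly diffeomorphic to $(-M)_{-r}(\overline{K})$, the contact structure $\xi_{K_r}$ is \emph{not} any kind of orientation reversal of $\xi_{\overline{K}_{-r}}$. These are two genuinely different contact structures, and their invariants $\cplusred(\xi_{K_r}) \in \HFred(-M_r(K))$ and $\cplusred(\xi_{\overline{K}_{-r}}) \in \HFred(M_r(K))$ lie in different (dual) groups with no identification of the sort you claim. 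Your assertion that ``$\cplusred$ of the surgery on $\overline{K}$ pulls back to $\cplusred(\xi_{K_r})$'' has no content; there is no such pullback. Likewise, applying Theorem~\ref{rationalmaintheorem} or Corollary~\ref{Rplus(red)} to $\overline{K}$ only controls $\cplus$ and $\cplusred$ of $\xi_{\overline{K}_{r'}}$, never of $\xi_{K_{-r'}}$. (Also note that Corollary~\ref{Rplus(red)} is deduced \emph{from} Corollary~\ref{cplusred preserved} in the paper, so invoking it here would be circular.)

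The paper's argument applies Theorem~\ref{rationalmaintheorem} \emph{twice}, to two different fibred knots. First, part~(1) applied to $\overline{K}$ (whose mirror is $K$), together with the hypothesis $\cplusred(\xi_K)\neq 0$, yields an $N$ with $\cplus(\xi_{\overline{K}_{-r}}) = 0$ for all $r > N$. Second, for a fixed $s > N$, one applies part~(1) again, now to the rationally fibred dual knot $\overline{K}_{-s} \subset (-M)_{-s}(\overline{K})$, whose mirror is precisely $K_s$: if $\cplusred(\xi_{K_s}) = 0$, then every admissible transverse surgery on $\overline{K}_{-s}$ has non-vanishing $\cplus$. But sufficiently negative admissible surgery on $\overline{K}_{-s}$ produces $\xi_{\overline{K}_{-s'}}$ for some $s' \in (N,s)$, which has $\cplus = 0$ by the first step --- a contradiction. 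The idea you are missing is this second application of the theorem to the \emph{dual} knot; that, not a direct mirror identification, is what links $\cplusred(\xi_{K_s})$ to the already-established vanishing of $\cplus(\xi_{\overline{K}_{-s'}})$.
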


As an amusing application, Theorem~\ref{maintheorem} gives many examples of rationally fibred knots $K$ such that both $K$ and $\overline{K}$ support tight contact structures.  The first such examples were discovered by Hedden and Plamenevskaya in \cite[Corollary~2]{HP}. This is a phenomenon that for null-homologous fibred knots only occurs when the monodromy of the fibration is isotopic to the identity.  To create these examples, start with a null-homologous fibred genus-$g$ knot $K'$ that supports a tight contact structure $\xi_{K'}$ on $M'$, and such that $\cplusred(\xi_{K'}) = 0$.  By \cite[Theorem~1.6]{Conway}, inadmissible transverse $r$-surgery on $K'$ is tight for $r > 2g-1$.  Thus, if $K = K'_r$ is the surgery dual knot to $K'$ under this $r$-surgery, then both $K$ and $\overline{K} = \overline{K'}_{-r}$ support tight contact structures, by \cite[Theorem~1.6]{Conway} and Theorem~\ref{maintheorem}, respectively.

Up to this point, we have only considered negative surgery coefficients.  This is because admissible transverse surgery is only defined for slopes in an interval $[-\infty, a)$, where $a$ is often hard to pin down.  However, when the fibred knot $K \subset M$ supports an overtwisted contact structure $\xi_K$, and the associated monodromy is not right-veering (see \cite{HKM:right1} for details), we can show that admissible transverse surgery is defined for any positive surgery coefficient, and that such a surgery is tight.

\begin{thm}\label{positive surgeries}
Let $K \subset M$ be a null-homologous fibred knot supporting an overtwisted contact structure $\xi_K$.  Assume further that the monodromy $\phi_K$ of the open book associated to the fibration is not right-veering.  Then we can define admissible transverse $r$-surgery for all $r \in \Q$, and the resulting manifold $(M_r(K),\xi_r)$ satisfies $\cplusred(\xi_r) \neq 0$ for all $r \geq 0$ (in addition to whatever is guaranteed by Theorem~\ref{maintheorem}).
\end{thm}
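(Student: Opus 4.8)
The plan is to prove the two halves of the statement in turn: first that admissible transverse $r$-surgery on $K$ can be defined for every $r \in \Q$, and then that $\cplusred(\xi_{K_r}) \neq 0$ whenever $r \geq 0$.

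For the first half, fix the open book $(\Sigma,\phi_K)$ with connected binding $K$. Since $\phi_K$ is not right-veering there is a properly embedded arc $\gamma \subset \Sigma$ with $\phi_K(\gamma)$ strictly to the left of $\gamma$ at one endpoint. I would first convert this combinatorial input into geometric room near the transverse knot: the left-veering arc $\gamma$ exhibits a bypass attached to the convex boundary of a standard neighbourhood of $K$ ``from the outside'', and attaching a chain of such bypasses thickens the standard neighbourhood of $K$ without bound --- there is no Bennequin-type obstruction to this since $(M,\xi_K)$ is overtwisted. Equivalently, $K$ admits Legendrian approximations $L$ with $\mathrm{tb}(L)$ arbitrarily large. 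Granting this, for any $r$ one chooses such an $L$ with $\mathrm{tb}(L) > r$ and realises admissible transverse $r$-surgery on $K$ as negative contact surgery on $L$, using the equivalence of these two operations for fibred knots recalled in the Introduction (\cite{BEVHM,Conway}); hence admissible transverse $r$-surgery on $K$ is defined for all $r \in \Q$.

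For the second half, fix $r \geq 0$ and realise the surgery by removing a standard neighbourhood $N$ of $K$ fat enough to accommodate the slope $r$. Identifying the resulting open book via the capping-off description of \cite{BEVHM}, one obtains an open book $(\Sigma',\phi')$ for $(M_r(K),\xi_{K_r})$ in which $\phi'$ is $\phi_K$ modified near $K$ by excising as much of the left-veering twisting encoded by $\gamma$ as the choice of $N$ allows and inserting the positive boundary-parallel twisting coming from the regluing. The key point is that the fractional Dehn twist coefficient of $\phi'$ is a non-decreasing function of $r$ that tends to $+\infty$ as $r \to 0^{-}$ --- this last fact follows already from the thin-neighbourhood description of admissible transverse $(-1/n)$-surgery together with Corollary~\ref{FDTC > 1 tight}. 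Consequently the fractional Dehn twist coefficient exceeds $1$ for all $r$ in some interval $(r_0,0)$, and hence, by monotonicity, for all $r \geq 0$. Corollary~\ref{FDTC > 1 tight} --- in the rationally-fibred generality of Theorem~\ref{rationalmaintheorem}, to accommodate a possibly disconnected binding --- then yields $\cplusred(\xi_{K_r}) \neq 0$ for every $r \geq 0$.

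The main obstacle is the first half: pinning down precisely how the purely combinatorial non-right-veering condition forces arbitrarily fat standard neighbourhoods of the transverse binding (equivalently, Legendrian approximations of unbounded Thurston--Bennequin number), and then tracking the regluing in the second half carefully enough both to identify the surgered open book and to confirm the monotonicity and the $+\infty$ behaviour of its fractional Dehn twist coefficient as $r \to 0^-$. By contrast, all of the Heegaard Floer input is already packaged into Theorem~\ref{rationalmaintheorem} and its Corollary~\ref{FDTC > 1 tight}, so once the surgery-theoretic bookkeeping is in place the non-vanishing of $\cplusred$ follows formally.
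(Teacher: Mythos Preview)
Your first half has the right target --- the non-right-veering hypothesis does yield Legendrian approximations of $K$ with arbitrarily large $\mathrm{tb}$ --- but your one-line bypass sketch is where the paper does actual work: it cites Ito--Kawamuro to conclude that $K$ has transverse depth $1$, then proves two lemmas (Lemmas~\ref{lem:depth transverse to legendrian} and~\ref{lem:destabilise along OT disc}) converting depth~$1$ first into a Legendrian approximation of depth~$1$ and then, via an explicit bypass on $\partial N(K)$ read off from the overtwisted disc, into arbitrary negative destabilisations. Your sketch is in the same spirit but skips this.

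The second half has a genuine gap. You propose to identify an open book $(\Sigma',\phi')$ for the surgered contact manifold via \cite{BEVHM} and then track $\mathrm{FDTC}(\phi')$ as a function of $r$. But for $r>0$ the contact structure $\xi_r$ produced by \emph{admissible} surgery (in the thickened neighbourhood you have just built) is \emph{not} the one supported by the surgery dual knot $K_r$: by Remark~\ref{admiss vs inadmiss} and Theorem~\ref{thm:OBD surgery}, the open-book machinery of \cite{BEVHM,Conway} outputs $\xi_{K_r}$, which for $r>0$ is the result of \emph{inadmissible} surgery. So there is no $\phi'$ available whose FDTC you could compute, and your ``monotonicity past $r=0$'' has nothing to be monotone in. (Your writing $\xi_{K_r}$ in place of $\xi_r$ in the second paragraph is a symptom of this conflation.) The paper avoids this entirely by never writing down an open book for $\xi_r$ with $r\geq 0$: it first performs admissible $(-1/n)$-surgery for large $n$, where Corollary~\ref{FDTC > 1 tight} applies directly since the page is unchanged and one has merely composed with $\tau_{\partial}^n$, giving $\cplusred(\xi_{K_{-1/n}})\neq 0$; then it observes that the thickened neighbourhood of $K$ survives as a neighbourhood of the dual knot inside $(M_{-1/n}(K),\xi_{K_{-1/n}})$, so that admissible $r$-surgery on $K$ for $r\geq 0$ factors as a further admissible surgery on $K_{-1/n}$, and Lemma~\ref{lemma:admissible preserves non-vanishing} carries $\cplusred\neq 0$ forward.
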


\begin{remark}\label{admiss vs inadmiss}
For $r > 0$, we stress that $\xi_r$ given in Theorem~\ref{positive surgeries} is not the contact structure $\xi_{K_r}$ on $M_r(K)$ supported by the rationally fibred knot $K_r$.  Indeed, $\xi_r$ is the result of admissible transverse surgery on $K$, whereas $\xi_{K_r}$ for $r > 0$ is the result of inadmissible transverse surgery on $K$ (see Theorem~\ref{thm:OBD surgery}).
\end{remark}

We can use Theorems~\ref{rationalmaintheorem}~and~\ref{positive surgeries} to narrow down the possibilities for manifolds that do not support tight contact structures.

\begin{cor}\label{where surgery is not tight}
If $\HFred(M) = 0$, and $K \subset M$ is a null-homologous fibred knot, then if $M_r(K)$ does not support a tight contact structure, then $\phi_K$ is right-veering and $r > 0$.

For a general $M$ and $K \subset M$ is null-homologous and fibred, there exists at most one $r \in \Q\cup \{\infty\}$ such that both orientations of $M_r(K)$ do not support a tight contact structure.  In addition, at least one of the following is true:
\begin{itemize}
\item $M_r(K)$ supports a tight contact structure in both of its orientations for all $r > 0$.
\item $M_r(K)$ supports a tight contact structure in both of its orientations for all $r < 0$.
\end{itemize}
\end{cor}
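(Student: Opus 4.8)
The plan is to combine Theorem~\ref{maintheorem} (in the stronger rational form Theorem~\ref{rationalmaintheorem}) and Corollary~\ref{Rplus(red)} with the identity of oriented manifolds $-M_r(K) = (-M)_{-r}(\overline{K})$. This identity realises the two orientations of $M_r(K)$ as a surgery on $K$ and a surgery on $\overline{K}$ with opposite coefficients, so that for $r < 0$ one orientation is governed by admissible transverse surgery on $K$ and the other by inadmissible transverse surgery on $\overline{K}$, and conversely for $r > 0$; the admissible halves are handled by the main theorem, the inadmissible halves by \cite{Conway} and, when a monodromy is not right-veering, by Theorem~\ref{positive surgeries}.

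First I would dispatch assertion~(1). Since $\cplusred(\xi_{\overline{K}})$ lies in $\HFred(-(-M)) = \HFred(M)$, the hypothesis $\HFred(M) = 0$ forces $\cplusred(\xi_{\overline{K}}) = 0$, and Theorem~\ref{maintheorem}(1) then gives $\cplus(\xi_{K_r}) \neq 0$, hence $M_r(K)$ tight, for every $r < 0$. For $r = 0$: capping the fibre surface of $K$ off across the surgery solid torus exhibits $M_0(K)$ as a surface bundle over $S^1$ with closed fibre $\hat{F}$; if $\hat{F}$ has positive genus the total space is aspherical and the fibration is a taut foliation, so $M_0(K)$ carries a weakly fillable, in particular tight, contact structure by the Eliashberg--Thurston theorem, while if $\hat{F} = S^2$ then $M = S^3$, $K$ is the unknot and $M_0(K) = S^1 \times S^2$ is tight. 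For $r > 0$: if $\phi_K$ is not right-veering then $\xi_K$ is overtwisted by \cite{HKM:right1}, so Theorem~\ref{positive surgeries} gives a contact structure on $M_r(K)$ with $\cplusred \neq 0$ for every $r \geq 0$. Hence $M_r(K)$ is tight for all $r \leq 0$ with no hypothesis on $\phi_K$, and also for all $r > 0$ once $\phi_K$ fails to be right-veering; so if $M_r(K)$ supports no tight contact structure then necessarily $r > 0$ and $\phi_K$ is right-veering.

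For assertion~(2), apply Theorem~\ref{rationalmaintheorem} and Corollary~\ref{Rplus(red)} to $K \subset M$ and, separately, to $\overline{K} \subset -M$, to obtain a threshold $R_K \in [-\infty, 0]$ with $M_r(K)$ tight for all $r \in (R_K, 0]$ (the case $r = 0$ as above), where $R_K = -\infty$ exactly when $\cplusred(\xi_{\overline{K}}) = 0$, and symmetrically $R_{\overline{K}} \in [-\infty, 0]$ with $-M_r(K) = (-M)_{-r}(\overline{K})$ tight for all $r \in [0, -R_{\overline{K}})$, where $R_{\overline{K}} = -\infty$ exactly when $\cplusred(\xi_K) = 0$. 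On the complementary ranges, \cite[Theorem~1.6]{Conway} --- refined by the computation of Corollary~\ref{Rplus(red)} and supplemented by Theorem~\ref{positive surgeries} when a monodromy is not right-veering --- shows $M_r(K)$ is tight for all $r$ above some finite bound and $-M_r(K)$ for all $r$ below some finite bound. The crucial point is that these bounds are mirror-dual to $R_{\overline{K}}$ and $R_K$, so that on each half-line the two tight ranges overlap away from at most one coefficient; hence at most one negative $r$ and at most one positive $r$ have both orientations non-tight, while $r = 0$ never does and $r = \infty$ (namely $M$ itself) does only if no finite $r$ does. To collapse this to a single bad value overall, I would use the structural fact that for a null-homologous fibred knot one cannot have both $\cplusred(\xi_K) \neq 0$ and $\cplusred(\xi_{\overline{K}}) \neq 0$ (if both $\xi_K$ and $\xi_{\overline{K}}$ were tight, $\phi_K$ would be isotopic to the identity, whence $M$ is a connected sum of copies of $S^1 \times S^2$ and $\cplusred(\xi_K) = 0$): thus at most one of $R_K, R_{\overline{K}}$ is finite and negative, so at most one of the two potential bad finite values is genuine, and in that case the corresponding orientation of $M$ is tight, so $r = \infty$ is not bad. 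The same dichotomy yields the bulleted alternative: if $\cplusred(\xi_{\overline{K}}) = 0$ then $M_r(K)$ is tight for all $r < 0$, and feeding $-M_r(K) = (-M)_{-r}(\overline{K})$ back through the machinery (inadmissible transverse surgery on $\overline{K}$, with $\cplusred(\xi_{\overline{K}}) = 0$) shows $-M_r(K)$ is tight for all $r < 0$ as well, which is the second bullet; the case $\cplusred(\xi_K) = 0$ gives the first bullet symmetrically; and the case where both reduced classes vanish is covered by either.

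The step I expect to be the main obstacle is the threshold-matching in the previous paragraph: proving that on each half-line the coefficient below which $\cplus(\xi_{K_r})$ can vanish is exactly the negative of the coefficient above which the inadmissible-surgery contact invariant of $\overline{K}$ becomes non-zero, so that the locus where neither orientation of $M_r(K)$ is tight shrinks to at most a point. This is essentially the content of Corollary~\ref{Rplus(red)} and the surgery-exact-triangle computation underlying it, and it subsumes the inadmissible analogue of the main theorem (that inadmissible transverse surgery on $\overline{K}$ remains tight for all positive coefficients once $\cplusred(\xi_{\overline{K}}) = 0$) needed for the ``both orientations'' clause of the bullets. Once that input is in place, what remains is routine bookkeeping with the mirror identity and with the ``for all $r$'' statements of Theorem~\ref{rationalmaintheorem}.
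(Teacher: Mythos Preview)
Your treatment of the first statement matches the paper's: $\HFred(M)=0$ forces $\cplusred(\xi_{\overline{K}})=0$, whence Theorem~\ref{maintheorem}(1) covers $r<0$, and Theorem~\ref{positive surgeries} handles $r\geq 0$ when $\phi_K$ is not right-veering. For the ``at most one $r$'' clause your threshold argument via Corollary~\ref{Rplus(red)} is also in the spirit of the paper's, though the paper organises the case analysis more directly (if both $\cplusred$ vanish, each orientation is tight on the appropriate half-line by Theorem~\ref{rationalmaintheorem}; otherwise, say $\cplusred(\xi_K)\neq 0$, then $M_r(K)$ is tight for all $r<0$ by Lemma~\ref{lemma:admissible preserves non-vanishing}, and Corollary~\ref{Rplus(red)} pins the only possible bad positive $r$ at $R^+(K)=R^+_{\rm red}(K)$).

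The genuine gap is in your argument for the bulleted alternative. You case-split on which of $\cplusred(\xi_K)$, $\cplusred(\xi_{\overline{K}})$ vanishes, and under $\cplusred(\xi_{\overline{K}})=0$ you assert that $-M_r(K)=(-M)_{-r}(\overline{K})$ is tight for all $r<0$ via ``inadmissible transverse surgery on $\overline{K}$, with $\cplusred(\xi_{\overline{K}}) = 0$''. But no result in the paper delivers that: the hypothesis $\cplusred(\xi_{\overline{K}})=0$ does not force inadmissible $s$-surgery on $\overline{K}$ to have non-vanishing contact invariant for all $s>0$ (it does not even force $\xi_{\overline{K}}$ to be tight), and the ``inadmissible analogue of the main theorem'' you flag as the main obstacle is neither a consequence of Corollary~\ref{Rplus(red)} nor proved anywhere. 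The paper sidesteps this entirely by splitting instead on right-veering: at least one of $\phi_K$, $\phi_{\overline{K}}$ is not right-veering (the exceptional case $\phi_K=\mathrm{id}$ having $\HFred(M)=0$ and so already handled); if it is $\phi_{\overline{K}}$, then $\xi_{\overline{K}}$ is overtwisted, so $\cplus(\xi_{\overline{K}})=0$ and Theorem~\ref{rationalmaintheorem}(2) makes $M_r(K)$ tight for all $r<0$, while Theorem~\ref{positive surgeries} applied to $\overline{K}$ (which is exactly where the not-right-veering hypothesis is needed) produces a tight contact structure on $(-M)_{-r}(\overline{K})=-M_r(K)$ for every $-r>0$ via \emph{admissible} surgery with positive coefficient. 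Your $\cplusred$-dichotomy cannot substitute here: $\cplusred(\xi_{\overline{K}})=0$ implies neither that $\phi_{\overline{K}}$ fails to be right-veering nor that Theorem~\ref{positive surgeries} applies to $\overline{K}$.
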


This paper is organised as follows: in Section~\ref{sec:background} we recall some Heegaard Floer homology, transverse surgery, and how they interact; in Section~\ref{sec:neg surgeries}, we deal with the proof of Theorem~\ref{maintheorem} and Corollaries~\ref{FDTC > 1 tight}--\ref{cplusred preserved}; in Section~\ref{sec:pos surgeries}, we prove Theorem~\ref{positive surgeries} and Corollary~\ref{where surgery is not tight}; and in Section~\ref{sec:questions}, we look at some further questions, namely, the difference between $\cplus$ and $\chat$, and genus-1 fibred knots.

\subsection*{Acknowledgements}

The author would like to thank John Etnyre for many helpful discussion throughout this project.  Additionally, the author has benefited from discussions with and/or comments on an early draft of this paper from David Shea Vela-Vick, Tom Mark, B\"{u}lent Tosun, Matt Hedden, and an anonymous referee.  This work was partially supported by NSF Grant DMS-13909073.

\section{Background}
\label{sec:background}

We assume familiarity with the basics of Heegaard Floer Homology and knot Floer Homology (\cite{OS:hf1, OS:hfk}); Legendrian and transverse knots, surgery on Legendrian knots, open book decompositions, and convex surfaces (\cite{Etnyre:knots, Etnyre:contactlectures, Etnyre:OBlectures, DGS:surgery}).  Here, we will recall the relevant information.

\subsection{Heegaard Floer Homology}
\label{subsec:HF}

All of the Heegaard Floer groups in this paper will be defined over the field $\F = \Z/2\Z$.  As per convention, given a generating set $\{\bm{x}\}$ for $\CFhat(M)$, we consider $\CFinfinity(M)$ as being generated by $\{U^i\bm{x}\,|\,i \in \Z\}$.  Then, $\CFminus(M)$ is generated by $\{U^i\bm{x}\,|\,i \in \Z_{> 0}\}$, and $\CFplus(M) = \CFinfinity(M) / \CFminus(M)$.  These chain complexes are involved in several short exact sequences, leading to homology long exact sequences.

$$\cdots \xrightarrow{\delta_+} \HFhat(M) \xrightarrow{\widehat{\iota}} \HFplus(M) \xrightarrow{U} \HFplus(M) \xrightarrow{\delta_+} \cdots$$

$$\cdots \xrightarrow{\delta_-} \HFminus(M) \xrightarrow{U} \HFminus(M) \xrightarrow{\pi_-} \HFhat(M) \xrightarrow{\delta_-} \cdots$$

$$\cdots \xrightarrow{\delta_\infty} \HFminus(M) \xrightarrow{\iota_-} \HFinfinity(M) \xrightarrow{\pi_\infty} \HFplus(M) \xrightarrow{\delta_\infty} \cdots$$

For the second short exact sequence, we naturally identify the chain complex generated by $\{U\bm{x}\}$ with $\CFhat(M)$.  With this identification, the commutativity of the following diagram can be checked on the chain level.

\begin{center}
\begin{tikzpicture}[node distance=2cm,auto]
  \node (UL) {$\HFhat(M)$};
  \node (UM) [right of=UL] {\null};
  \node (UR) [right of=UM] {$\HFplus(M)$};
  \node (BR) [below of=UM] {$\HFminus(M)$.};
  \draw[->] (UL) to node {$\widehat\iota$} (UR);
  \draw[->] (UR) to node {$\delta_\infty$} (BR);
  \draw[->] (UL) to node[below]{$\delta_-$} (BR);
\end{tikzpicture}
\end{center}

We define $\HFred(M)$ to be $\coker \pi_\infty$, or equivalently $\ker \iota_-$; these two definitions are isomorphic via $\delta_\infty$.

The Heegaard Floer chain groups for $-M$ are dual to those for $M$.  In particular, we get a natural non-degenerate bilinear pairing (the Kronecker pairing) on $\CFhat(-M) \otimes \CFhat(M)$ that descends to homology:
$$\langle \cdot, \cdot \rangle : \HFhat(-M) \otimes \HFhat(M) \to \F.$$

There is a similar pairing for $\HFpm(-M)$ and $\HFmp(M)$, see \cite{OS:4manifolds1} for details.  For our purposes, it is sufficient to note that if $\delta_-(x) \neq 0$, where $x \neq 0 \in \HFhat(-M)$ and $\delta_- : \HFhat(-M) \to \HFplus(-M)$, then there exists some $y \neq 0 \in \HFhat(M)$ such that $\langle x, y \rangle \neq 0$ and $y \in \im \delta_+$, or equivalently, $\widehat\iota(y) = 0 \in \HFplus(M)$.

Putting this together with the fact that $\delta_- = \delta_\infty \circ \widehat\iota$, we get the following characterisation of elements of $\HFred(-M)$, which we will use in the proof of Theorem~\ref{maintheorem}.

\begin{lem}\label{HF duality}
Given an element $x \in \HFhat(-M)$, then $\widehat\iota(x) \in \HFplus(-M)$ descends to a non-zero element of $\HFred(-M)$ if and only if there exists some $y \neq 0 \in \HFhat(M)$ such that $\langle x, y \rangle \neq 0$ and $\widehat\iota(y) = 0 \in \HFplus(M)$.
\end{lem}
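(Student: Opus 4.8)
The plan is to exploit the naturality and non-degeneracy of the Kronecker pairing together with the commutative triangle $\delta_- = \delta_\infty \circ \widehat\iota$, and to convert the statement ``$\widehat\iota(x)$ is a non-zero element of $\HFred(-M)$'' into a dual condition on $\HFhat(M)$. First I would unpack both sides. Saying that $\widehat\iota(x) \in \HFplus(-M)$ descends to a non-zero class in $\HFred(-M) = \coker \pi_\infty$ means precisely that $\widehat\iota(x) \notin \im \pi_\infty$. By exactness of the third long exact sequence, $\im \pi_\infty = \ker \delta_\infty$, so this is equivalent to $\delta_\infty(\widehat\iota(x)) \neq 0$ in $\HFminus(-M)$, i.e.\ $\delta_-(x) \neq 0$. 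So the whole lemma reduces to the equivalence: $\delta_-(x) \neq 0$ in $\HFplus(-M)$ (via the first sequence's boundary map $\delta_- \colon \HFhat(-M) \to \HFplus(-M)$, which I should be careful is the same map appearing just before the lemma statement) if and only if there exists $y \neq 0 \in \HFhat(M)$ with $\langle x, y\rangle \neq 0$ and $\widehat\iota(y) = 0 \in \HFplus(M)$.

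For the forward direction, assume $\delta_-(x) \neq 0$. This is exactly the hypothesis of the paragraph preceding the lemma: the discussion there (invoking \cite{OS:4manifolds1}) tells us directly that there exists $y \neq 0 \in \HFhat(M)$ with $\langle x, y\rangle \neq 0$ and $y \in \im \delta_+$, equivalently $\widehat\iota(y) = 0 \in \HFplus(M)$ by exactness of the first long exact sequence ($\im \delta_+ = \ker \widehat\iota$). So that direction is essentially quoted. For the converse, suppose such a $y$ exists but, for contradiction, $\delta_-(x) = 0$. Then $x \in \ker \delta_- = \im \widehat\iota^*$ — more precisely, I want to say $x$ is ``$\pi_-$ of something'' in the dual picture: since the $(-M)$ sequences are dual to the $M$ sequences, $\delta_- \colon \HFhat(-M) \to \HFplus(-M)$ is adjoint (under the Kronecker pairings) to $\widehat\iota \colon \HFhat(M) \to \HFplus(M)$ — this adjointness is the content I'd extract from \cite{OS:4manifolds1}, and it is the natural dual of ``$\delta_- = \delta_\infty\circ\widehat\iota$.'' Granting it, $\langle \delta_-(x), z\rangle = \langle x, \widehat\iota^{\vee}(z)\rangle$ appropriately, and $\delta_-(x) = 0$ forces $\langle x, y'\rangle = 0$ for every $y'$ in the image of the map dual to $\pi_\infty$ — but $\{y : \widehat\iota(y) = 0\} = \im \delta_+$ is exactly the annihilator (under the perfect pairing) of $\im \pi_\infty$, hence $x$ pairs to zero with every such $y$, contradicting the existence of our $y$. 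So $\delta_-(x) \neq 0$, completing the equivalence.

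The main obstacle is bookkeeping the duality correctly: I need the precise statement that under the Kronecker pairings $\HFhat(-M)\otimes\HFhat(M)\to\F$ and $\HFplus(-M)\otimes\HFminus(M)\to\F$ (and the version pairing $\HFplus$ with $\HFminus$), the maps $\widehat\iota$ and $\delta_-$ on the two manifolds are mutually adjoint, and likewise that $\im\delta_+ \subset \HFhat(M)$ is the exact orthogonal complement of $\im\widehat\iota^{\,\rm dual}$, or equivalently the orthogonal complement of $\im\pi_\infty \subset \HFplus(M)$ under a suitable pairing. This is standard (it is the homological-algebra shadow of the fact that $\CFhat(-M) = \mathrm{Hom}(\CFhat(M),\F)$ with the differential transposed, so that all the connecting maps transpose accordingly, and the six-term sequences for $-M$ are the $\mathrm{Hom}(-,\F)$-duals of those for $M$), but stating it cleanly without redeveloping \cite{OS:4manifolds1} requires care; I would cite that reference for the pairing and for the adjointness, and present the orthogonal-complement bookkeeping as an elementary consequence of non-degeneracy plus exactness. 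Everything else is a diagram chase in the three long exact sequences already displayed.
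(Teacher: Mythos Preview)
Your proposal is correct and follows essentially the same route as the paper, which does not give a separate proof: the lemma is presented as an immediate consequence of the preceding two paragraphs (non-degeneracy of the Kronecker pairing, the adjointness of $\delta_-$ on $-M$ with $\delta_+$ on $M$ cited from \cite{OS:4manifolds1}, and the identity $\delta_- = \delta_\infty \circ \widehat\iota$). You have simply made explicit the converse direction and the orthogonal-complement bookkeeping that the paper leaves to the reader; note that the target of $\delta_-$ is $\HFminus(-M)$, not $\HFplus(-M)$, a slip that also appears in the paper's text.
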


\subsection{Knot Heegaard Floer Homology}
\label{subsec:HFK}

Ozsváth and Szabó \cite{OS:hfk} and independently Rasmussen \cite{Rasmussen} define a filtration on $\CFhat(M)$ induced by a (rationally) null-homologous knot $K \subset M$; this filtration $\mathcal{A}$ is called the {\it Alexander grading}.  We get an induced bi-filtration on $\CFinfinity(M)$ given by $\mathcal{F}(U^i\bm{x}) = (-i, \mathcal{A}(\bm{x}) - i)$.  We will use $\mathcal{C}_K$ to denote the complex $\CFinfinity(M)$ filtred by $K$, and as per convention, $\mathcal{C}_K\{\mbox{relation}(i,j)\}$ will denote sub and quotient complexes of $\mathcal{C}_K$ determined by $\mbox{relation}(i,j)$.

\begin{remark}
The Alexander grading depends on a choice of relative Spin$^c$ structure $\spins \in \Spinc(M,K)$.  However, different choices of $\spins$ give isomorphic chain complexes, and in practice amounts to shifting the Alexander grading by a fixed constant.  Since we are only interested in the structure of $\mathcal{C}_K$ and not the particular values of the Alexander grading, we need not worry about this issue.  For the remainder of the article, we assume that we have fixed $\spins$, whenever needed.  See \cite{OS:hfk, OS:rationalsurgery, HP, Raoux} for more details.
\end{remark}

First note that $\mathcal{C}_K\{i = 0\} = \CFhat(M)$, and $\mathcal{C}_K\{i \geq 0\} = \CFplus(M)$.  The homology of quotient complexes of $\mathcal{C}_K\{i = 0\}$ tell us whether $K$ is fibred.

\begin{thm}[\cite{Ni}, \cite{HP}]\label{thm:fibred HFK}
A rationally null-homologous knot $K\subset M$ with irreducible complement is fibred if and only if $H_*(\mathcal{C}_K\{i = 0, j = {\rm bottom}\})\cong\F$, where ${\rm bottom}$ is the smallest value of $j$ such that the homology is non-trivial.  If $K$ is integrally null-homologous, then ${\rm bottom} = g(K)$, where $g(K)$ is the genus of the knot.
\end{thm}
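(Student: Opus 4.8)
The plan is to translate the statement into sutured Floer homology and then invoke the theory of taut sutured manifolds. Fix a (rational) Seifert surface $F$ for $K$ of minimal genus, and let $(Y', \gamma')$ denote the balanced sutured manifold obtained from the sutured knot complement $(M \setminus N(K), \gamma_\mu)$---where $\gamma_\mu$ is a pair of oppositely oriented meridional sutures---by decomposing along $F$. Juh\'asz's surface decomposition theorem identifies $SFH(Y', \gamma')$ with $H_*(\mathcal{C}_K\{i = 0, j = {\rm bottom}\})$; that ${\rm bottom}$ is the first non-trivial Alexander grading, and that it equals $g(K)$ in the integrally null-homologous case, then follows from knot Floer homology detecting the Thurston norm. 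Since the complement is irreducible and $F$ is norm-minimising, $(Y', \gamma')$ is taut.

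First I would treat the forward implication, which is essentially formal: if $K$ is fibred with fibre $F$, then cutting the complement open along $F$ yields the product sutured manifold $F \times [0,1]$, and Juh\'asz's computation $SFH(F \times [0,1]) \cong \F$, combined with the identification above, gives $H_*(\mathcal{C}_K\{i = 0, j = {\rm bottom}\}) \cong \F$.

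The real content is the reverse implication: assuming $SFH(Y', \gamma') \cong \F$, one must show that $(Y', \gamma')$ is a product, equivalently that $K$ is fibred. Following \cite{Ni}, the idea is to run a sutured manifold hierarchy on $(Y', \gamma')$. If it were not a product it would be taut of positive depth, so by Gabai's theory it admits a non-trivial hierarchy; the key input is Juh\'asz's theorem that each decomposing surface in such a hierarchy can be chosen \emph{nice} (well-groomed), which forces the $SFH$ of the decomposed manifold to be a non-zero direct summand of $SFH(Y', \gamma')$, hence of rank exactly $1$ at every stage. One then argues---and this is the heart of Ni's argument---that a taut sutured manifold with $\rank SFH = 1$ must in fact be a product; this requires a careful choice of hierarchy, with the genus-one situation settled by Ghiggini serving as a guiding special case. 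The extension to rationally null-homologous $K$, including the appropriate notion of rational Seifert surface and the associated sutured manifold with several parallel sutures, is carried out by Hedden and Plamenevskaya in \cite{HP}.

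The main obstacle is this reverse direction, and within it the crux is reconciling Gabai's sutured manifold hierarchies with the functoriality of $SFH$ under surface decomposition: Gabai's construction does not a priori produce surfaces along which $SFH$ behaves well, so arranging nice/groomed decomposing surfaces at every stage---while controlling tautness and what happens once the hierarchy is pushed into positive depth---is where essentially all the difficulty lies. By contrast, the forward implication and the $SFH$ reinterpretation are routine once the sutured setup and the surface decomposition theorem are in hand.
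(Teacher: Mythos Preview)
The paper does not give its own proof of this statement: Theorem~\ref{thm:fibred HFK} is a background result, quoted with attribution to \cite{Ni} and \cite{HP}, and used as a black box throughout Section~\ref{sec:neg surgeries}. There is therefore nothing in the paper to compare your argument against.

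That said, your outline is an accurate summary of how the cited proofs actually go. The translation of the extremal Alexander grading into $SFH$ of the sutured manifold obtained by decomposing along a minimal-genus (rational) Seifert surface is exactly Juh\'asz's surface decomposition theorem; the forward implication is then immediate from $SFH(F\times I)\cong\F$; and the reverse implication is Ni's theorem (with Ghiggini's genus-one case as the prototype) that a taut sutured manifold with $\rank SFH = 1$ is a product, whose proof proceeds via a carefully controlled sutured hierarchy along well-groomed surfaces. You correctly locate the difficulty in making Gabai's hierarchy compatible with the summand behaviour of $SFH$ under nice decompositions. The extension to rationally null-homologous knots is indeed what \cite{HP} supplies.

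One small caution: the value ${\rm bottom} = g(K)$ in the statement appears to be a sign slip in the paper (compare the proof of Theorem~\ref{rationalmaintheorem}, where ${\rm bottom} = -g$ and ${\rm top} = g$); the Thurston-norm detection you invoke gives $-g(K)$ for the lowest nontrivial Alexander grading under the standard normalisation. This does not affect your argument, only the stated constant.
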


Let $K$ be a rationally null-homologous knot with a fixed framing.  If $K$ is null-homologous, then we can choose the Seifert framing, and if not, then we can choose the \textit{canonical framing} (see \cite{MT, Raoux}), but this choice will not affect our results.

According to \cite{OS:hfk, OS:rationalsurgery}, we can use $\mathcal{C}_K$ to determine the Heegaard Floer homology of sufficiently large (positive or negative) surgeries on $K$ (with respect to the fixed framing).  For our purposes, it will be sufficient to identify the complexes $$\mathcal{C}_{K_n}\{i = 0, j = {\rm bottom}\} \mbox{, }\mathcal{C}_{K_n}\{i = 0\} \mbox{, and } \mathcal{C}_{K_n}\{i \geq 0\},$$ where $K_n$ is the surgery dual knot to $K$ after $n$-surgery, for a large integer $n > 0$.  We will also determine $$\mathcal{C}_{K_{-n}}\{i = 0, j = {\rm top}\} \mbox{, } \mathcal{C}_{K_{-n}}\{i = 0\} \mbox{, and } \mathcal{C}_{K_{-n}}\{i \geq 0\},$$ where $K_{-n}$ is the surgery dual knot to $K$ after $-n$-surgery, for a large integer $n > 0$.

Let ${\rm top}$ be the maximum value of $j_0$ such that $H_*\left(\mathcal{C}_K\{i = 0, j = j_0\}\right)$ is non-trivial, and define ${\rm bottom}$ as the minimum such value.  Let $\widehat{A} = \mathcal{C}_{K}\{\max(i,j-{\rm top}+1)=0\}$, let $A^+ = \mathcal{C}_{K}\{\max(i,j-{\rm top}+1)\geq0\}$, and let $S = \mathcal{C}_{K}\{i < 0 , j = {\rm top}-1\} = \mathcal{C}_K\{i = -1, j = {\rm top}-1\}$.  The last equality holds because for a fixed $i_0$, we can assume that $\mathcal{C}_K\{i = i_0\}$ is non-trivial only for ${\rm bottom} < j - i_0 < {\rm top}$.

\begin{thm}[\cite{OS:hfk, OS:rationalsurgery, HP}]\label{thm:HF positive surgery formula}
Let $K \subset M$ be a rationally null-homologous knot with a fixed framing.  For all sufficiently large integers $n$, we have:
\begin{itemize}
\item $\mathcal{C}_{K_n}\{i = 0, j = {\rm bottom}\} \simeq S$,
\item $\mathcal{C}_{K_n}\{i = 0\} = \CFhat(M_n(K)) \simeq \widehat{A}$,
\item $\mathcal{C}_{K_n}\{i \geq 0\} = \CFplus(M_n(K)) \simeq A^+$,
\item $\chat(\xi_{K_n})$ is the image of $H_*(S) \to H_*(A)$.
\end{itemize}
\end{thm}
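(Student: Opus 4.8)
The plan is to deduce all four bullets from the large-$n$ surgery formula of Ozsv\'ath and Szab\'o, refined so as to keep track of the Alexander filtration coming from the dual knot $K_n$, together with the standard description of the contact invariant of a (rationally) fibred knot as the image of the bottom-grading generator of knot Floer homology. First I would observe that $K_n$ is itself (rationally) fibred: since $M_n(K)$ and $K_n$ have the same exterior as $M$ and $K$, that exterior fibres over $S^1$ with fibre the once-punctured fibre surface of $K$, and for $|n|$ large the boundary slope of this surface is not the meridian of $K_n$, so the surface is a rational Seifert surface for $K_n$ and carries the supported contact structure $\xi_{K_n}$ of \cite{BEVHM} (this is the surgery picture already recorded in the introduction). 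In particular Theorem~\ref{thm:fibred HFK} applies to $K_n$, and $H_*(\mathcal{C}_{K_n}\{i=0,j=\mathrm{bottom}\})\cong\F$.

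For the first three bullets I would invoke \cite{OS:hfk} in the integral case and \cite{OS:rationalsurgery}, \cite{HP} in the rational case. These produce, for all sufficiently large $n$ and in the $\Spinc$ structure $\mathfrak{s}_n$ carrying $\xi_{K_n}$, a chain homotopy equivalence between $\CFplus(M_n(K),\mathfrak{s}_n)$ and the subquotient complex $A^+ = \mathcal{C}_K\{\max(i,j-\mathrm{top}+1)\geq 0\}$; the shift by $\mathrm{top}-1$ is precisely what matches $\mathfrak{s}_n$ to the distinguished relative $\Spinc$ class, and ``sufficiently large'' means $n$ exceeds a bound depending only on the width $\mathrm{top}-\mathrm{bottom}$ of the filtration and the page genus. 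Restricting this equivalence to the $i=0$ part turns $\CFplus$ into $\CFhat$ and $A^+$ into $\widehat A$, which is the second bullet; taking homology then gives the first two bullets.

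The third and fourth bullets require the refinement that the equivalence $\CFhat(M_n(K),\mathfrak{s}_n)\simeq\widehat A$ can be chosen filtered with respect to the Alexander filtration of the dual knot $K_n$, which under the surgery formula becomes the algebraic filtration on $\widehat A$. Near the bottom this filtration is concentrated in the most negative values of $i$, and using the support bound on $\mathcal{C}_K$ recorded above (which gives $\mathcal{C}_K\{i<0,\,j=\mathrm{top}-1\} = \mathcal{C}_K\{i=-1,\,j=\mathrm{top}-1\}$) its bottom associated graded piece is exactly the subcomplex $S\hookrightarrow\widehat A$. Hence $\mathcal{C}_{K_n}\{i=0,j=\mathrm{bottom}\}\simeq S$, which is the third bullet. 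Finally, by the description of the contact invariant of a (rationally) fibred knot via knot Floer homology --- Ozsv\'ath--Szab\'o in the null-homologous case \cite{OS:contact}, and \cite{BEVHM}, \cite{HP} rationally, equivalently the invariant $\LOSS$ of a Legendrian approximation of $K_n$ on the page --- the class $\chat(\xi_{K_n})$ is, with our orientation conventions, the image of the generator of $H_*(\mathcal{C}_{K_n}\{i=0,j=\mathrm{bottom}\})$ under inclusion into $\HFhat(M_n(K)) = H_*(\widehat A)$. Transporting this inclusion through the equivalences above exhibits it as the image of $H_*(S)\to H_*(\widehat A)$, which is the fourth bullet.

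The main obstacle is the bookkeeping in the third paragraph: one must check that the surgery equivalence can be arranged compatibly with the dual knot's Alexander filtration, identify the bottom associated graded of that filtration with the corner complex $S$ rather than with $\mathcal{C}_K\{i=0,j=\mathrm{bottom}\}$, and match the orientation conventions in the fibred-knot contact class statement; one also needs a single value of $n$ to be large enough for the first three bullets, the filtered refinement, and the $\Spinc$ matching simultaneously. The identifications of $\CFhat$ and $\CFplus$ themselves are by now routine consequences of the cited surgery formulae, so the genuinely new content is the last bullet, and even there the work lies in matching conventions rather than in new homological algebra.
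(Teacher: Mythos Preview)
The paper does not prove this theorem: it is quoted from the literature (Ozsv\'ath--Szab\'o's large-surgery formula together with the filtered refinement of Hedden--Plamenevskaya), with only the brief remark following Theorem~\ref{thm:HF negative surgery formula} indicating that \cite[Theorem~4.2]{HP} goes through for rationally null-homologous knots. Your sketch therefore supplies more than the paper itself does, and its overall shape---the large-surgery formula for the $\CFhat$ and $\CFplus$ identifications, the filtered refinement from \cite{HP} tracking the dual knot's Alexander grading for the $S$ identification, and the fibred-knot description of the contact class for the last bullet---is exactly how one assembles the statement from the cited sources.

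A few small corrections. Your bullet numbering is scrambled: the identification $\mathcal{C}_{K_n}\{i=0,j=\mathrm{bottom}\}\simeq S$ is the \emph{first} bullet, not the third, and $\CFplus\simeq A^+$ is the third, not something you derive before the $\CFhat$ statement. Your phrase ``restricting this equivalence to the $i=0$ part'' is also imprecise: the $U$-filtration variable on $\CFplus(M_n(K))$ is not literally the coordinate $i$ on $\mathcal{C}_K$, so one should invoke the separate (but compatible) identifications $\CFhat\simeq\widehat A$ and $\CFplus\simeq A^+$ given by the surgery formula rather than claim one follows from the other by restriction. Finally, note that the first three bullets hold for any rationally null-homologous $K$, not just fibred ones; your opening paragraph establishing that $K_n$ is fibred is only needed for the fourth bullet, where $\xi_{K_n}$ must be defined.
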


Let $\widehat{A}' = \mathcal{C}_{K}\{\min(i,j - {\rm bottom} - 1)=0\}$, let $A'^+ = \mathcal{C}_{K}\{\min(i,j-{\rm bottom}-1)\geq0\}$, and let $Q' = \mathcal{C}_{K}\{i > 0 , j = {\rm bottom}+1\} = \mathcal{C}_K\{i = 1, j = {\rm bottom} + 1\}$.

\begin{thm}[\cite{OS:hfk, OS:rationalsurgery, HP}]\label{thm:HF negative surgery formula}
Let $K \subset M$ be a rationally null-homologous knot with a fixed framing.  For all sufficiently large integers $n$, we have:
\begin{itemize}
\item $\mathcal{C}_{K_{-n}}\{i = 0, j = {\rm top}\} \simeq Q'$,
\item $\mathcal{C}_{K_{-n}}\{i = 0\} = \CFhat(M_{-n}(K)) \simeq \widehat{A}'$,
\item $\mathcal{C}_{K_{-n}}\{i \geq 0\} = \CFplus(M_{-n}(K)) \simeq A'^+$.
\end{itemize}
\end{thm}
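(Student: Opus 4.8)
The plan is to deduce this from Theorem~\ref{thm:HF positive surgery formula} by passing to the mirror $\overline K\subset -M$ and dualising. The geometric input is the orientation-reversal identity $M_{-n}(K)=-\bigl((-M)_n(\overline K)\bigr)$ — mirroring reverses the orientation of the ambient manifold and negates the framing — under which the surgery dual knots correspond: $K_{-n}\subset M_{-n}(K)$ is the mirror of $(\overline K)_n\subset (-M)_n(\overline K)$, since in each case the dual knot is the core of the glued-in solid torus and mirroring carries cores to cores. For null-homologous $K$ the Seifert framing is used on both sides and is manifestly preserved; for rationally null-homologous $K$ one checks that the canonical framing of $\overline K$ is induced from that of $K$, so the hypotheses transfer (and, as already noted, the precise framing does not affect the conclusions).

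Next I would set up the algebraic dictionary. The mirror identity $\CFKinfinity(-M,\overline K)\cong\CFKinfinity(M,K)^{\ast}$ sends the bifiltration $(i,j)$ to $(-i,-j)$, interchanges sub-complexes with quotient complexes, and identifies $\mathrm{top},\mathrm{bottom}$ for $\overline K$ with $-\mathrm{bottom},-\mathrm{top}$ for $K$; concretely $\mathcal{C}_{\overline K}\{\,\mathrm{rel}(i,j)\,\}\cong\bigl(\mathcal{C}_{K}\{\,\mathrm{rel}(-i,-j)\,\}\bigr)^{\ast}$. Combining this with the standard Heegaard Floer dualities $\CFhat(-Y)\cong\CFhat(Y)^{\ast}$ and $\CFplus(-Y)\simeq\CFminus(Y)^{\ast}$ (the latter up to a grading reversal), a routine rewriting of the relations $\{i<0,\ j=\mathrm{top}-1\}$ and $\{\max(i,\,j-\mathrm{top}+1)=0\}$ under $(i,j)\mapsto(-i,-j)$ turns them into the relations $\{i>0,\ j=\mathrm{bottom}+1\}$ and $\{\min(i,\,j-\mathrm{bottom}-1)=0\}$ defining $Q'$ and $\widehat{A}'$, while the relation $\{\max(i,\,j-\mathrm{top}+1)<0\}$ of the minus-flavour complex becomes $\{\min(i,\,j-\mathrm{bottom}-1)\ge 1\}$, which the $U$-action on $\CFKinfinity$ identifies with the relation $\{\min(i,\,j-\mathrm{bottom}-1)\ge 0\}$ defining $A'^+$.

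Then I would apply Theorem~\ref{thm:HF positive surgery formula}, together with the $\CFminus$/$\CFinfinity$ part of the same large-surgery package in \cite{OS:hfk, OS:rationalsurgery, HP}, to $\overline K\subset -M$: for all sufficiently large $n$ this gives complexes $S_{\overline K},\widehat{A}_{\overline K},A^-_{\overline K}$, built from $\mathcal{C}_{\overline K}$ by the relations above, with $\mathcal{C}_{(\overline K)_n}\{i=0,\,j=\mathrm{bottom}\}\simeq S_{\overline K}$, $\CFhat\bigl((-M)_n(\overline K)\bigr)\simeq\widehat{A}_{\overline K}$, and $\CFminus\bigl((-M)_n(\overline K)\bigr)\simeq A^-_{\overline K}$. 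Dualising the first via $\CFhat(-Y)\cong\CFhat(Y)^{\ast}$ and the dual-knot correspondence $K_{-n}=\overline{(\overline K)_n}$ yields $\mathcal{C}_{K_{-n}}\{i=0,\,j=\mathrm{top}\}\cong\bigl(\mathcal{C}_{(\overline K)_n}\{i=0,\,j=\mathrm{bottom}\}\bigr)^{\ast}\simeq (S_{\overline K})^{\ast}\cong Q'$; dualising the second gives $\CFhat(M_{-n}(K))\simeq(\widehat{A}_{\overline K})^{\ast}\cong\widehat{A}'$; and dualising the third via $\CFplus(-Y)\simeq\CFminus(Y)^{\ast}$ gives $\CFplus(M_{-n}(K))\simeq (A^-_{\overline K})^{\ast}\cong A'^+$ after the $U$-shift. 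The double duals cancel since everything is finite-dimensional in each grading over $\F$, and the range ``sufficiently large $n$'' is inherited from the positive case; there is no contact-class bullet to transport.

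I expect the main obstacle to be the bookkeeping rather than any new idea: keeping three dualities straight at once (orientation reversal of $M$, mirroring of $K$ with its $(i,j)\mapsto(-i,-j)$ effect, and the $\pm$-flavour duality), correctly tracking the sub/quotient swap, the $\mathrm{top}\leftrightarrow\mathrm{bottom}$ and $\min\leftrightarrow\max$ interchanges, and the Maslov and Alexander grading shifts, and checking that the $\Spinc$ decomposition can be split off and reassembled compatibly. The one genuinely nontrivial (though standard) manoeuvre is the $U$-shift needed to land exactly on $A'^+$, which is the same normalisation already used in the positive surgery argument of \cite{OS:hfk}.
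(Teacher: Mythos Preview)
The paper does not supply its own proof of this statement: Theorem~\ref{thm:HF negative surgery formula} is quoted directly from the literature \cite{OS:hfk, OS:rationalsurgery, HP}, with only the brief remark afterwards that the proof of \cite[Theorem~4.2]{HP} (underlying the first bullet point) goes through unchanged for rationally null-homologous fibred knots. There is therefore nothing in the paper to compare your argument against.

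That said, your proposal is a perfectly reasonable way to \emph{derive} the negative large-surgery formula from the positive one, and the outline is correct. The orientation-reversal identity $M_{-n}(K)=-\bigl((-M)_n(\overline K)\bigr)$, the correspondence of surgery duals, the $(i,j)\mapsto(-i,-j)$ effect of mirroring on $\CFKinfinity$, and the resulting $\mathrm{top}\leftrightarrow-\mathrm{bottom}$, $\max\leftrightarrow\min$ swaps are all standard and lead to $S_{\overline K}^{\ast}\cong Q'$, $\widehat A_{\overline K}^{\ast}\cong\widehat A'$, and (after the $U$-shift) $(A^-_{\overline K})^{\ast}\cong A'^{+}$ exactly as you describe. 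The one place to be careful is that $\widehat A$ and $\widehat A'$ are sub\emph{quotients} rather than subcomplexes, so the duality interchanges which piece is ``sub'' and which is ``quotient''; this is implicit in your $\max\leftrightarrow\min$ translation but worth making explicit when you write it out. Your own assessment that the difficulty is bookkeeping rather than ideas is accurate.
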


\begin{remark}
The proof of \cite[Theorem~4.2]{HP}, on which the first bullet points of both the theorems above are based, goes through with no changes for rationally null-homologous fibred knots, after taking care when calculating the values of the Alexander filtration.
\end{remark}

Based on Theorem~\ref{thm:fibred HFK}, we conclude that if $K$ is fibred, $H_*(Q')$ and $H_*(S)$ are singly generated.

\subsection{Transverse Surgery and Open Books}
\label{subsec:transverse surgery}

Consider a contact manifold $(M, \xi)$ containing a transverse knot $K$.  Fix a framing on $K$, against which all slopes will be measured.  There is a natural analogue of Dehn surgery on transverse knots, called {\it transverse surgery}.  This comes in two flavours: {\it admissible} and {\it inadmissible}.  See \cite{BE:transverse, Conway} for more details.

Given a transverse knot $K\subset (M, \xi)$, a neighbourhood of $K$ is contactomorphic to $\{r \leq a\} \subset (S^1 \times \R^2, \xi_{\rm{rot}}=\ker(\cos r\, dz + r\sin r\,d\theta))$ for some $a$, where $z$ is the coordinate on $S^1$, and $K$ is identified with the $z$-axis.  The slope of the leaves of the foliation induced by the contact planes on the torus $\{r = r_0\}$ is $-\cot r_0 / r_0$.  We will abuse notation and refer to the torus $\{r = r_0\}$ (resp.{}\ the solid torus $\{r \leq r_0\}$) as being a torus (resp.{}\ a solid torus) of slope $-\cot r_0 / r_0$.

If $L$ is a Legendrian approximation of $K$, then inside a standard neighbourhood of $L$, we can find a neighbourhood of $K$ where the boundary torus is of slope $p/q$, for any $p/q < tb(L)$.  Conversely, if we have a neighbourhood for $K$ where the boundary torus is of slope $p/q$, then there exist Legendrian approximations $L_n$ of $K$ with $tb(L_n) = n$, for any integer $n < p/q$.

To perform {\it admissible transverse surgery}, we take a torus of rational slope $p/q$ inside $\{r \leq a\}$, we remove the interior of the corresponding solid torus of slope $p/q$, and perform a contact cut on the boundary, {\it ie.\ }we quotient the boundary by the $S^1$-action of translation along the leaves of slope $p/q$ (see \cite{Lerman} for details); the contact structure descends to the quotient manifold to give the result of admissible transverse $p/q$-surgery on $K$.  To perform {\it inadmissible transverse surgery}, we first remove $\{r < b\}$ for some $b \leq a$. We then glue on the $T^2 \times I$ layer $\{r_0 \leq r \leq b + 2\pi\}$ by identifying $\{r = b + 2\pi\}$ with $\bd\left( M\backslash \{r < b\}\right)$; we choose $r_0$ such that the new boundary $\{r = r_0\}$ is a torus of slope $p/q$.  The result of inadmissible transverse $p/q$-surgery on $K$ is the result of performing a contact cut on the new boundary.

A (rationally) fibred link $L \subset M$ induces an open book decomposition of $M$, which supports a unique contact structure $\xi_L$ and in which $L$ is a transverse link (\cite{Giroux:OBD, BEVHM}).  The construction of $\xi_L$ gives a natural neighbourhood of each component of $L$ in which we can define admissible transverse surgery up to the slope given by the page (the \textit{page slope}).

\begin{remark}\label{multiple tori}
As in \cite[Remark 2.9]{Conway}, the result of admissible transverse surgery depends in general on the neighbourhood of $K$ chosen to define it.  For fibred knots $K \subset (M, \xi_K)$, we will always be using the neighbourhood mentioned above, unless otherwise noted.
\end{remark}

If a link $L\subset M$ is (rationally) fibred, then topological $r$-surgery on a component $K$ of $L$ for any $r$ not equal to the page slope gives a rationally fibred knot $L' \subset M'$.

\begin{thm}[\cite{BEVHM}, \!\cite{Conway}]\label{thm:OBD surgery} The contact structure $\xi_{L'}$ on $M'$ is the result of admissible (resp.{}\ inadmissible) transverse $r$-surgery on the knot $K \subset (M, \xi_L)$, when $r$ is less than (resp.{}\ greater than) the page slope. \end{thm}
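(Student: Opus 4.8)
The plan is to compare the two constructions directly inside a standard neighbourhood of the surgered binding component $K$, reducing the contact-geometric assertion to a topological statement about open books via Giroux's correspondence. Both $\xi_{L'}$ and the transverse-surgery contact structure are obtained by altering $(M,\xi_L)$ only inside a neighbourhood of $K$, so it suffices to match the two alterations. First I would fix the model contactomorphism of a neighbourhood of $K$ with $\{r\le a\}\subset(S^1\times\R^2,\xi_{\rm{rot}})$, in which $K$ is the core $\{r=0\}$ and the pages of the open book for $L$ meet the neighbourhood in the half-planes $\{\theta=\theta_0\}$. A short computation shows that each such page meets the torus $\{r=r_0\}$ in a curve of the page (Seifert) framing, that the slope function $-\cot r_0/r_0$ is monotonic in $r_0$, and that its supremum over the neighbourhood is the page slope; hence the interior tori $\{r=r_0\}$ realise exactly the slopes strictly below the page slope. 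This both explains the dichotomy in the statement and separates the two cases.

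For the admissible case ($r$ below the page slope) I would choose $r_0$ with $-\cot r_0/r_0=r$, remove the interior of $\{r\le r_0\}$, and perform the contact cut along the leaves of slope $r$. The core of the argument is twofold: first, that collapsing the slope-$r$ foliation by the contact cut is topologically Dehn filling along slope $r$, i.e.\ realises topological $r$-surgery on $K$; and second, that the half-plane pages $\{\theta=\theta_0\}$ extend across the filling solid torus to an honest open book of $M'$ whose binding is $L'=(L\setminus K)\cup K'$, with $K'$ the core of the glued-in torus. Because the contact cut produces, by construction, a contact structure compatible with this extended open book, Giroux's uniqueness identifies it with $\xi_{L'}$; the monodromy one reads off is precisely the one induced by $r$-surgery on the fibred link, closing this case.

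For the inadmissible case ($r$ above the page slope) no torus in $\{r\le a\}$ has slope $r$, so I would first excise $\{r<b\}$ and insert the rotative layer $T^2\times I\cong\{r_0\le r\le b+2\pi\}$ to create a boundary torus of slope $r$ before cutting. I would then show that this inserted layer is a tight $T^2\times I$ whose net effect on the open book is to modify the monodromy by the appropriate number of boundary-parallel Dehn twists, producing exactly the rationally fibred open book of $L'$ for $r$ above the page slope; Giroux's uniqueness again closes the case. Throughout, the genuinely rational setting requires the slope-$p/q$ leaves to wrap $q$ times around the torus, so the filling is a Seifert piece and the page meets $K'$ in $q$ strands, a point I would track carefully in the page-extension step.

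The main obstacle is the framing bookkeeping in the second halves of both cases: translating ``contact cut along slope $r$'' into ``topological $r$-Dehn filling,'' and then into the correct modification of pages and monodromy. Pinning down the normalisation between the fibration framing (the page slope) and the surgery coefficient $r$ exactly right — and, in the inadmissible case, verifying that the inserted $T^2\times I$ layer contributes exactly the intended boundary twists without introducing overtwisting or extra ambiguity — is the delicate part; once this is settled, the remainder follows formally from the model computation and Giroux's correspondence.
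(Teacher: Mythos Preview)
This theorem is not proved in the present paper: it is stated with attribution to \cite{BEVHM} (for $r$ below the page slope) and \cite{Conway} (for $r$ above), and the paper only records, immediately after the statement, the algorithm from \cite[Section~3.3]{Conway} for producing explicit abstract open books for $\xi_{L'}$. There is therefore no in-paper proof to compare your proposal against.

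That said, your outline is essentially the strategy of the cited sources. The admissible case is exactly the Baker--Etnyre--van Horn-Morris argument: in the model neighbourhood the pages are the radial half-planes $\{\theta=\theta_0\}$, the contact cut on the torus of slope $r$ realises topological $r$-filling, and the pages extend over the glued-in solid torus to give the rational open book with binding $L'$; compatibility with the quotient contact structure is immediate from the contact-cut construction, so Giroux's uniqueness finishes. Your inadmissible sketch is in the right spirit, but the argument in \cite{Conway} is organised differently from what you describe: rather than inserting a single rotative $T^2\times I$ and reading off the monodromy in one step, it factors inadmissible $r$-surgery as an inadmissible $(f+1/n)$-surgery (which contributes $n$ negative boundary-parallel Dehn twists) followed by an admissible surgery, handled as above. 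The ``main obstacle'' you identify --- the framing bookkeeping and verifying that the inserted layer contributes exactly the intended boundary twists --- is indeed where the work lies, and your proposal acknowledges this step without yet carrying it out.
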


In \cite[Section~3.3]{Conway}, the author gave an algorithm to exhibit an abstract integral open book supporting $\xi_{L'}$.  To do this, we break up an admissible transverse surgery into a sequence of integral admissible transverse surgeries, and an inadmissible transverse surgery is treated as an inadmissible transverse $(f+1/n)$-surgery followed by an admissible transverse surgery (where $f$ is the page slope).  The algorithm can be described in terms of the following two moves: (1) adding a boundary parallel Dehn twist about a boundary component of the current open book, and (2) stabilising the open book along a boundary parallel arc.  The first move is an integral admissible (resp.{} inadmissible) transverse surgery if we add a positive (resp.{} negative) Dehn twist, and the second move changes the surgery coefficient of subsequent surgeries.  See \cite[Section~3.3]{Conway} for the details.

\subsection{Heegaard Floer Contact Elements}
\label{subsec:HF and contact geometry}

As in \cite{OS:contact, HP}, when $K \subset M$ is rationally fibred, the \textit{contact element} $\widehat{c}(\xi_K) \in \HFhat(-M)$ of the supported contact structure is given by the image of
$$H_*(\mathcal{C}_{\overline{K}}\{i = 0, j = {\rm bottom}\}) \to H_*(\mathcal{C}_{\overline{K}}\{i = 0\}) \cong \HFhat(-M),$$ where $\overline{K}$ is the knot $K$ considered as a subset of $-M$.  We denote by $\cplus(\xi_K)$ and $\cplusred(\xi_K)$ the elements $\widehat\iota(\chat(\xi_K)) \in \HFplus(-M)$ and $\delta_\infty(\cplus(\xi_K)) \in \HFred(-M)$.  By Lemma~\ref{HF duality}, $\cplusred(\xi_K) \neq 0$ if and only if there exists some $y \neq 0 \in \HFhat(M)$ such that $\langle \chat(\xi_K), y \rangle \neq 0$ and $\widehat\iota(y) = 0 \in \HFplus(M)$.

The contact elements interest us because their non-vanishing proves that the contact structure is tight (see \cite{OS:contact}).  Furthermore, they behave well under capping off binding components of open books (\cite{Baldwin:cappingoff}) and admissible transverse surgery, as in the following lemma.

\begin{figure}[htbp]
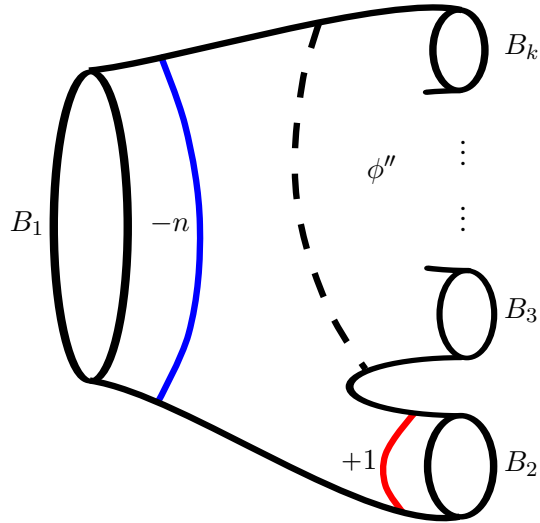

\begin{center}
\begin{overpic}[scale=3,tics=20]{"positivesurgery"}
\put(-15,110){\large $B_1$}
\put(172,19){\large $B_2$}
\put(172,77){\large $B_3$}
\put(172,177){\large $B_k$}
\put(110,20){\large $+1$}
\put(38,110){\large $-n$}
\put(155,110){\large $\vdots$}
\put(155,135){\large $\vdots$}
\put(120,130){\large $\phi''$}
\end{overpic}
\caption{This open book is glued along $B_1$ to do transverse inadmissible $s$-surgery, for $1/n < s < 1/(n-1)$.  The monodromy is $\tau_{B_1}^{-n}\tau_{B_2}\phi''$, where the support of $\phi''$ is to the right of the dashed curve.}
\label{fig:positive surgery}
\end{center}
\end{figure}

\begin{lem}\label{lemma:admissible preserves non-vanishing}
Let $K \subset (M, \xi)$ be a transverse knot (not necessarily fibred or null-homologous).  If $\cplus(\xi) \neq 0$ (resp.{}\ $\cplusred(\xi) \neq 0$), then the result $(M',\xi')$ of admissible transverse surgery on $K$ satisfies $\cplus(\xi')\neq0$ (resp.{}\ $\cplusred(\xi') \neq 0$).
\end{lem}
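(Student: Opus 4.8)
The plan is to reduce the statement to the behaviour of the contact class under the two elementary moves into which any admissible transverse surgery decomposes, following the algorithm of \cite[Section~3.3]{Conway}. First I would recall that an arbitrary admissible transverse $r$-surgery (for $r$ below the page slope) can be realised as a finite sequence of \emph{integral} admissible transverse surgeries interspersed with stabilisations of the open book along boundary-parallel arcs; in terms of monodromies, the first move replaces $\phi$ by $\tau_B \circ \phi$ where $\tau_B$ is a positive boundary-parallel Dehn twist, and the second move is a positive (right-handed) Murasugi-type stabilisation along a boundary-parallel arc. Since the contact class $\chat$, and hence $\cplus$ and $\cplusred$ via $\widehat\iota$ and $\delta_\infty$, is an invariant of the contact structure, it suffices to show each elementary move preserves the non-vanishing of $\cplus$ (resp.\ $\cplusred$). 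Because positive stabilisation does not change the contact structure at all, only the first move (adding a positive boundary-parallel Dehn twist) needs genuine argument.

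For that move I would appeal to the capping-off formalism of Baldwin \cite{Baldwin:cappingoff}, already flagged in the text as the relevant tool. Adding a positive boundary-parallel Dehn twist about a binding component $B$ is, up to isotopy, the operation obtained by capping off a suitable auxiliary binding component in an enlarged open book; Baldwin's results give a map on Heegaard Floer homology (commuting with $U$, hence with $\widehat\iota$ and $\delta_\infty$) that carries the contact class of the smaller open book to the contact class of the capped-off one. More precisely, I would set up a one-parameter family of open books $(\Sigma', \tau_B^k \phi')$ and use Baldwin's exact triangle / naturality statement to produce, from a nonzero $\cplus(\xi)$, a nonzero image in $\HFplus$ of the surgered manifold that is identified with $\cplus(\xi')$; the same map sends $\cplusred(\xi)$ to $\cplusred(\xi')$ since it intertwines $\delta_\infty$. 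The fact that positive Dehn twists never \emph{create} overtwistedness on the nose is exactly what makes the map preserve non-vanishing rather than merely being defined.

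Alternatively, and perhaps more cleanly, I would phrase the first move directly as a $(-1)$-contact surgery on a Legendrian approximation pushed onto the page: an integral admissible transverse surgery corresponds to negative contact surgery on a Legendrian approximation of $K$ (as noted in the introduction, under the hypotheses of interest), and negative contact surgery — being obtained by attaching a Stein/Weinstein 2-handle along the Legendrian — induces a map on $\HFplus(-M) \to \HFplus(-M')$ taking $\cplus(\xi)$ to $\cplus(\xi')$ (this is the Honda--Kazez--Mati\'c / Ozsv\'ath--Szab\'o functoriality of the contact class under Legendrian surgery). Since this cobordism map commutes with $U$, it also takes $\cplusred(\xi)$ to $\cplusred(\xi')$. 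Iterating over the finitely many elementary moves then yields the claim for all admissible transverse surgeries on $K$, regardless of whether $K$ is fibred or null-homologous, since the decomposition into elementary moves makes no use of those hypotheses.

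The main obstacle I anticipate is bookkeeping rather than deep input: one must verify that the maps supplied by \cite{Baldwin:cappingoff} (or by Legendrian-surgery functoriality) genuinely intertwine $\widehat\iota$ and $\delta_\infty$ so that the $\cplus$ and $\cplusred$ statements follow simultaneously, and that the algorithmic decomposition of a general admissible transverse surgery is compatible with the neighbourhood of $K$ fixed in Remark~\ref{multiple tori} (so that ``admissible transverse surgery'' means what we think it means at each stage). Handling the $U$-equivariance carefully — in particular that $\delta_\infty$ is natural under the cobordism maps in question — is the one point where I would slow down and check chain-level details.
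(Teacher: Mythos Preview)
Your approach is viable but takes a different route from the paper. The paper argues backwards: it passes to the surgery dual $K' \subset (M', \xi')$, realises $(M, \xi)$ as \emph{inadmissible} transverse $s$-surgery on $K'$, places $K'$ as a binding component via \cite[Lemma~6.5]{BEVHM}, stabilises so that $s>0$ relative to the page, and builds an explicit open book for $(M, \xi)$ from the algorithm of \cite[Section~3.3]{Conway}; it then invokes the monoid property for monodromies with non-vanishing contact class (\cite{BEVHM, Baldwin:monoid}) together with Baldwin's capping-off theorem to push non-vanishing from $(M, \xi)$ back to $(M', \xi')$. Your forward decomposition into positive boundary twists plus stabilisations, handled by Legendrian-surgery functoriality of $\cplus$, is more direct and avoids capping off entirely; $U$-equivariance of the cobordism maps then handles $\cplusred$ uniformly. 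What the paper's backward route buys is that it is agnostic about which neighbourhood of $K$ defined the admissible surgery (cf.\ Remark~\ref{multiple tori}): it only needs an open book for $(M', \xi')$ containing $K'$, whereas your forward argument tacitly assumes the chosen surgery neighbourhood of $K$ is compatible with an open-book neighbourhood---a step you should make explicit via \cite[Lemma~6.5]{BEVHM} plus stabilisation.

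Two small corrections. The Stein-cobordism map runs $\HFplus(-M') \to \HFplus(-M)$, sending $\cplus(\xi') \mapsto \cplus(\xi)$, not the direction you state; the non-vanishing conclusion survives, but the arrow is reversed. And your attempt to phrase ``adding $\tau_B$'' as capping off an auxiliary binding component in an enlarged open book does not parse---capping off and composing with a boundary twist are different operations---so drop that paragraph and keep your cleaner Legendrian-surgery formulation.
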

\begin{proof}
If $K' \subset (M', \xi')$ is the surgery dual knot to $K$, then some inadmissible transverse $s$-surgery on $K'$ results in $(M, \xi)$, where $s$ is measured with respect to some framing of $K'$.  By \cite[Lemma~6.5]{BEVHM}, there exists an open book decomposition for $(M',\xi')$ such that $K'$ is a component of the binding.  We then stabilise this open book along arcs parallel to the boundary component corresponding to $K'$ to get an open book $(\Sigma', \phi')$, such that $s > 0$ when measured against the framing induced by $\Sigma'$ on $K'$.  By Theorem~\ref{thm:OBD surgery}, we can construct an integral open book for $(M, \xi)$ from $(\Sigma', \phi')$, using the algorithm from \cite[Section~3.3]{Conway}.

Let $0$ be the page slope on $K'$.  If $s = 1/n$ for some positive integer $n$, then $(M, \xi)$ is supported by the open book $(\Sigma', \phi'\circ\tau_{B'}^{-n})$, where $\tau_{B'}$ is a Dehn twist about the boundary component $B'$ corresponding to $K'$.  By \cite{BEVHM, Baldwin:monoid}, the property that a monodromy supports a contact structure with non-vanishing contact invariant is preserved under composition.  Hence, since $(\Sigma', \tau_{B'}^n)$ supports a contact structure with non-vanishing contact invariant, and $\phi' = \left(\phi'\circ\tau_{B'}^{-n}\right)\circ\tau_{B'}^n$, so too does $(\Sigma', \phi')$ support a contact structure with non-vanishing contact invariant.

If $1/n < s < 1/(n-1)$ (where if $n=1$, we mean just $s > 1$), then by \cite[Section~3.3]{Conway}, $(M, \xi)$ is supported by an open book $(\Sigma' \cup \Sigma'', \phi'\circ\tau^{-n}_{B_1}\tau_{B_2}\phi'')$.  Here, the open book $(\Sigma'',\tau_{B_1}^{-n}\tau_{B_2}\phi'')$ has boundary components $B_1, B_2,\ldots, B_k$ (where $k \geq 3$), and $\phi''$ is supported in a neighbourhood of $B_3 \cup \cdots \cup B_k$ (see Figure~\ref{fig:positive surgery}); we glue $\Sigma'$ to $\Sigma''$ by identifying $B_1$ with $B'$ (the binding component corresponding to $K'$).  In this open book for $(M, \xi)$, if we cap off $B_3,\ldots,B_k$, we arrive at an open book $(\Sigma', \phi'\circ\tau_{B'}^{-n+1})$.

Since by \cite{Baldwin:cappingoff}, capping off an open book preserves the non-vanishing of the contact class, if $n = 1$, then we are done.  If $n > 1$, then $-n+1 > 0$, and we have an open book for inadmissible transverse $1/(-n+1)$-surgery on $K'$, where the supported contact structure has non-vanishing contact invariant.  We finish the proof as in the case $s = 1/(-n+1)$.
\end{proof}

\section{Negative Surgeries}
\label{sec:neg surgeries}

In this section, we will state and prove Theorem~\ref{maintheorem} in full generality (as Theorem~\ref{rationalmaintheorem}), and then prove its corollaries.

\begin{thm}\label{rationalmaintheorem}
Let $K \subset M$ be a rationally null-homologous fibred knot with a fixed framing, and let $f$ be the slope on $\bd N(K)$ induced by a page of the fibration.  $(M_r(K), \xi_{K_r})$ is the result of admissible transverse $r$-surgery on $K \subset (M, \xi_K)$, where $r$ is measured with respect to the fixed framing, then:
\begin{enumerate}
\item $c^+(\xi_{K_r}) \neq 0$ for all $r < f$ if and only if $c^+_{\rm{red}}(\xi_{\overline{K}}) = 0$.
\item $c^+_{\rm{red}}(\xi_{K_r}) \neq 0$ for all $r < f$ if and only if $c^+(\xi_{\overline{K}}) = 0$.
\end{enumerate}
\end{thm}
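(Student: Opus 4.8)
The strategy is to reduce the behaviour of $\cplus(\xi_{K_r})$ and $\cplusred(\xi_{K_r})$ for \emph{all} $r<f$ to a single computation at a very negative slope, and then to translate that computation into the structure of $\mathcal{C}_{\overline{K}}$ via the large-negative-surgery formula of Theorem~\ref{thm:HF negative surgery formula}. First I would observe that, because $r<f$ means $K_r$ is obtained by admissible transverse surgery, Lemma~\ref{lemma:admissible preserves non-vanishing} gives a monotonicity: if $r'<r<f$ then $\xi_{K_{r'}}$ is obtained from $\xi_{K_r}$ by a further admissible transverse surgery (one passes through the surgery-dual description and uses that admissible surgeries compose), so non-vanishing of $\cplus$ (resp.\ $\cplusred$) at slope $r$ forces it at all more negative slopes. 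Hence ``$\cplus(\xi_{K_r})\neq 0$ for all $r<f$'' is equivalent to ``$\cplus(\xi_{K_{-n}})\neq 0$ for all sufficiently large integers $n$,'' and likewise for $\cplusred$; this is the step that collapses the quantifier over $r$ to a limiting statement, and it is where I would be most careful that the hypotheses of Lemma~\ref{lemma:admissible preserves non-vanishing} really apply at every intermediate slope and that the page slope $f$ is not crossed.

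Next I would compute $\chat(\xi_{K_{-n}})$ and its images in $\HFplus$, $\HFred$ for large $n$ using the knot-surgery formula applied to $\overline{K}\subset -M$. By the definition of the contact class recalled in Section~\ref{subsec:HF and contact geometry}, $\chat(\xi_{K_{-n}})\in\HFhat(-M_{-n}(K))$ is the image of $H_*(\mathcal{C}_{\overline{K_{-n}}}\{i=0,j=\mathrm{top}\})$; but $\overline{K_{-n}} = (\overline{K})_{n}$ is large \emph{positive} surgery on $\overline{K}\subset -M$ (mirroring reverses the sign of the surgery coefficient). So I apply Theorem~\ref{thm:HF positive surgery formula} to $\overline{K}\subset -M$: the relevant complexes are identified with $S=\mathcal{C}_{\overline{K}}\{i=-1,\,j=\mathrm{top}-1\}$ sitting inside $\widehat A=\mathcal{C}_{\overline{K}}\{\max(i,j-\mathrm{top}+1)=0\}$ and $A^+=\mathcal{C}_{\overline{K}}\{\max(i,j-\mathrm{top}+1)\geq 0\}$, and $\chat(\xi_{K_{-n}})$ is the image of $H_*(S)$ in $H_*(\widehat A)$. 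Since $\overline{K}$ is fibred, $H_*(S)\cong\F$. Then $\cplus(\xi_{K_{-n}})$ is the image of this class under $H_*(\widehat A)\to H_*(A^+)$, and $\cplusred$ its further image in $\HFred(-M_{-n}(K))\cong\ker(\iota_-)$. The task is to read off, purely from the filtered chain homotopy type of $\mathcal{C}_{\overline{K}}$, when these images vanish for all large $n$.

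The key comparison is with the contact class of $\overline{K}$ itself: $\chat(\xi_{\overline{K}})\in\HFhat(M)$ — careful, mirroring $\overline{K}\subset -M$ lands us back in $M$ — is the image of $H_*(\mathcal{C}_{K}\{i=0,j=\mathrm{bottom}\})$, which by Theorem~\ref{thm:fibred HFK} is a single generator, and $\cplus(\xi_{\overline{K}})$, $\cplusred(\xi_{\overline{K}})$ are its images in $\HFplus(M)$, $\HFred(M)$. Using Lemma~\ref{HF duality}, $\cplusred(\xi_{\overline{K}})\neq 0$ iff the generating class of $H_*(\mathcal{C}_{K}\{i=0,j=\mathrm{bottom}\})$ pairs nontrivially (under the Kronecker pairing) with some $y\in\HFhat(-M)$ lying in $\im\delta_+$, i.e.\ with $\widehat\iota(y)=0$. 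Dualising, the bottom-filtration generator in $\mathcal{C}_K$ corresponds exactly to the top-filtration generator in $\mathcal{C}_{\overline{K}}$ that feeds $S$ above. So the plan is a bookkeeping argument: trace the single generator of $H_*(S)$ through the maps $H_*(S)\to H_*(\widehat A)\to H_*(A^+)$ and identify its image in $\HFred$ with the obstruction measured by $\cplusred(\xi_{\overline{K}})$ (for part (1)) resp.\ by $\cplus(\xi_{\overline{K}})$ (for part (2)). Concretely: $\cplus(\xi_{K_{-n}})=0$ for large $n$ iff the $S$-generator maps into the image of $U$ on $A^+$, which unwinds to the statement that the corresponding class in $\HFplus(-M)$ is in the image of $\HFinfinity$, i.e.\ that $\cplusred(\xi_{\overline K})=0$ fails — giving the contrapositive of (1); and $\cplusred(\xi_{K_{-n}})=0$ for large $n$ iff the $S$-generator already dies in $\HFplus(-M)$ under $\widehat\iota$, i.e.\ $\cplus(\xi_{\overline K})=0$ fails, giving (2).

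I expect the main obstacle to be the last paragraph: precisely matching the subquotient $S\hookrightarrow \widehat A\to A^+$ coming from the large-$n$ surgery formula with the long exact sequences $\HFhat\to\HFplus\xrightarrow{U}\HFplus$ and $\HFinfinity\to\HFplus\to\HFred$ for $-M$ itself, and checking that ``image of $H_*(S)$ lands in $\im U$'' really is equivalent, as $n\to\infty$, to ``$\widehat\iota$ of the $\overline K$-bottom class survives to a nonzero element of $\HFred(M)$.'' This requires being honest about which direction of the Alexander filtration corresponds to which, keeping the mirror bookkeeping straight ($\overline{K_{-n}}=(\overline K)_n$, and mirroring a knot in $-M$ returns to $M$), and verifying the stabilisation relating the page slope $f$ to the integer slope $-n$ does not change the contact class (this is the content of the algorithm in \cite[Section~3.3]{Conway} together with Baldwin's capping-off invariance \cite{Baldwin:cappingoff}, as used in the proof of Lemma~\ref{lemma:admissible preserves non-vanishing}). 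Once the identification of subquotients is pinned down, both equivalences in (1) and (2) should follow formally from the exact sequences and Lemma~\ref{HF duality}.
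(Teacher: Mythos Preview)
Your overall architecture matches the paper's: reduce to large negative integer surgeries via Lemma~\ref{lemma:admissible preserves non-vanishing}, then compute using the large-surgery formula and a diagram chase invoking Lemma~\ref{HF duality}. For part~(1) your setup is essentially the paper's. One writes two short exact sequences on $\mathcal{C}_{\overline K}$ with common subcomplex $D=\mathcal{C}_{\overline K}\{i=0,\,j\le g-1\}$; the associated homology long exact sequences link $\widehat\iota:\HFhat(-M)\to\HFplus(-M)$ on one side to $\iota_S:H_*(S)\to H_*(A^+)$ on the other, and the equivalence $\cplus(\xi_{K_{-n}})=0 \Leftrightarrow \cplusred(\xi_{\overline K})\neq 0$ is a short chase. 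Two small slips to fix: $\chat(\xi_{K_{-n}})$ comes from the \emph{bottom} Alexander level of $\mathcal{C}_{\overline{K_{-n}}}$, not the top; and $\cplus(\xi_{K_{-n}})=0$ means $\iota_S(x)=0$, not merely that it lands in $\im U$ (your subsequent clause ``in the image of $\HFinfinity$, i.e.\ $\cplusred(\xi_{\overline K})=0$ fails'' has the direction reversed as well).

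For part~(2), however, your sketch is not yet a proof. You try to stay inside the $(S,\widehat A,A^+)$ picture built from $\mathcal{C}_{\overline K}$, but $\cplusred(\xi_{K_{-n}})\neq 0$ is detected (via Lemma~\ref{HF duality}) by an element $y\in\HFhat(M_{-n}(K))$ with $\widehat\iota(y)=0\in\HFplus(M_{-n}(K))$, and nothing in that diagram computes $\HFhat(M_{-n}(K))$ or $\HFplus(M_{-n}(K))$. Those groups require the \emph{negative} surgery formula, Theorem~\ref{thm:HF negative surgery formula}, applied to $K\subset M$, which identifies them with $H_*(\widehat A')$ and $H_*(A'^+)$ built from $\mathcal{C}_K$. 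The paper therefore runs a second, dual diagram on $\mathcal{C}_K$, with $Q'=\mathcal{C}_K\{i=1,\,j=-g+1\}$ playing the role of $S$ and with $\iota_Q:H_*(Q')\to\HFplus(M)$ identified (via $U$) with the map defining $\cplus(\xi_{\overline K})$; once that diagram is written down, the chase is formally identical to part~(1). Your phrase ``the $S$-generator already dies in $\HFplus(-M)$'' conflates $-M$ with $-M_{-n}(K)$ and cannot be made precise as stated. So: keep your strategy, but for~(2) switch to $\mathcal{C}_K$ and invoke Theorem~\ref{thm:HF negative surgery formula} rather than trying to squeeze both parts out of the part-(1) diagram.
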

\begin{proof}
To keep notation simple, we will assume $K$ is null-homologous, where the fixed framing is taken to be the Seifert framing (or equivalently, the page slope $f$), and we fix a relative Spin$^c$ structure to measure the Alexander grading on $\mathcal{C}_K$ and $\mathcal{C}_{\overline{K}}$ such that ${\rm top} = g$ and ${\rm bottom} = -g$, where $g = g(K)$ is the genus of $K$.  The proof for the more general case is the same, \textit{mutatis mutandis}.

It follows from Lemma~\ref{lemma:admissible preserves non-vanishing} that the vanishing of $\cplus(\xi_{K_r})$ (resp.{}\ $\cplusred(\xi_{K_r})$) for some $r < 0$ implies the same vanishing for $\xi_{K_n}$, where $n$ is a sufficiently negative integer.

Thus, to prove Theorem~\ref{rationalmaintheorem} we only need to show that the theorem is true when $r$ is restricted to be a sufficiently negative integer $n$, using Theorems~\ref{thm:HF positive surgery formula}~and~\ref{thm:HF negative surgery formula}.

\subsection*{Proof of (1)} Since $\cplus(\xi_{K_n}) \in \HFplus(-\left(M_n(K)\right)) = \HFplus(\left(-M\right)_{-n}(\overline{K}))$, we will look at $\mathcal{C}_{\overline{K}}$, and large positive integer surgeries on $\overline{K}$.  We start by noting two short exact sequences (where the left map is inclusion), which commute with the obvious vertical inclusions:

\begin{center}
\begin{tikzpicture}[node distance=3cm,auto]
  \node (UL) {$\mathcal{C}_{\overline{K}}\{i = 0, j \leq g-1\}$};
  \node (ULL) [left of=UL, node distance = 3cm] {$0$};
  \node (UM) [right of=UL, node distance=4cm] {$\mathcal{C}_{\overline{K}}\{i = 0\}$};
  \node (UR) [right of=UM, node distance=4cm] {$\mathcal{C}_{\overline{K}}\{i = 0, j = g\}$};
  \node (URR) [right of=UR, node distance = 3cm] {$0$};
  \node (LL) [below of=UL, node distance = 2cm] {$\mathcal{C}_{\overline{K}}\{i = 0, j \leq g-1\}$};
  \node (LLL) [left of=LL, node distance = 3cm] {$0$};
  \node (LM) [right of=LL, node distance=4cm] {$\mathcal{C}_{\overline{K}}\{i \geq 0\}$};
  \node (LR) [right of=LM, node distance=4cm] {$\mathcal{C}_{\overline{K}}\{\max(i,j-g+1) \geq 1\}$};
  \node (LRR) [right of=LR, node distance = 3cm] {$0$};
  \draw[->] (ULL) to (UL);
  \draw[->] (UL) to (UM);
  \draw[->] (UM) to (UR);
  \draw[->] (UR) to (URR);
  \draw[->] (LLL) to (LL);
  \draw[->] (LL) to (LM);
  \draw[->] (LM) to (LR);
  \draw[->] (LR) to (LRR);
  \draw[->] (UL) to (LL);
  \draw[->] (UM) to (LM);
  \draw[->] (UR) to (LR);
\end{tikzpicture}
\end{center}

Denote the left-hand complexes by $D$.  We know that $\mathcal{C}_{\overline{K}}\{i = 0 \} = \CFhat(-M)$ and $\mathcal{C}_{\overline{K}}\{i \geq 0\} = \CFhat(-M)$.  The two right-hand complexes can be identified via $U$ with $\mathcal{C}_{\overline{K}}\{i = -1, j = g-1\} = S$ and $\mathcal{C}_{\overline{K}}\{\max(i, j - g + 1) \geq 0\} = A^+$.  The following commutative diagram comes from the homology long exact sequence associated to the above diagram.

\begin{center}
\begin{tikzpicture}[node distance=3cm,auto]
  \node (UL) {$H_*(D)$};
  \node (UM) [right of=UL, node distance=3cm] {$\HFhat(-M)$};
  \node (UR) [right of=UM, node distance=3cm] {$H_*(S)$};
  \node (URR) [right of=UR, node distance = 3cm] {$H_*(D)$};
  \node (LL) [below of=UL, node distance = 2cm] {$H_*(D)$};
  \node (LM) [right of=LL, node distance=3cm] {$\HFplus(-M)$};
  \node (LR) [right of=LM, node distance=3cm] {$H_*(A^+)$};
  \node (LRR) [right of=LR, node distance = 3cm] {$H_*(D)$};
  \draw[->] (UL) to node{$\widehat\iota_D$} (UM);
  \draw[->] (UM) to node{$\widehat\pi$} (UR);
  \draw[->] (UR) to node{$\widehat\partial$} (URR);
  \draw[->] (LL) to node{$\iota^+_D$} (LM);
  \draw[->] (LM) to node{$\pi^+$} (LR);
  \draw[->] (LR) to node{$\partial^+$} (LRR);
  \draw[->] (UL) to node{$\cong$} (LL);
  \draw[->] (UM) to node{$\widehat\iota$} (LM);
  \draw[->] (UR) to node{$\iota_S$} (LR);
  \draw[->] (URR) to node{$\cong$} (LRR);
\end{tikzpicture}
\end{center}

By Theorem~\ref{thm:fibred HFK}, $H_*(S)$ is generated by a single element $x$.  The quotient complex $\mathcal{C}_{\overline{K}}\{i = 0, j = g\}$ is dual to the subcomplex $\mathcal{C}_K\{i = 0, j = -g\}$, and $\chat(\xi_{\overline{K}})$ is the image of $$H_*(\mathcal{C}_K\{i = 0, j = -g\}) \to \HFhat(M).$$ It follows that if $\chat(\xi_{\overline{K}}) \neq 0$, then any element $y \in \widehat\pi^{-1}(x) \subset \HFhat(-M)$ will pair non-trivially with $\chat(\xi_{\overline{K}})$. Thus, based on Theorem~\ref{thm:HF positive surgery formula} and the discussion in Section~\ref{subsec:HF and contact geometry}, we deduce the following facts:

\begin{itemize}
\item $\cplusred(\xi_{\overline{K}}) \neq 0$ if and only if $\widehat\partial(x) = 0$ and there is a $y \in \widehat\pi^{-1}(x)$ such that $\widehat\iota(y) = 0$;
\item $\cplus(\xi_{K_n}) = 0$ for sufficiently negative integers $n$ if and only if $\iota_S(x) = 0$.
\end{itemize}

First assume that $\cplusred(\xi_{\overline{K}}) \neq 0$.  As above, we can find some non-zero $y \in \HFhat(-M)$ such that $\widehat\pi(y) = x$ and $\widehat\iota(y) = 0$.  But then by commutativity, $\iota_S(x) = \pi^+\widehat\iota(y) = 0$, and so $\cplus(\xi_{K_n}) = 0$.

Now assume that $\cplus(\xi_{K_n}) = 0$.  Thus, $\iota_S(x) = 0$.  Since $\widehat\partial(x) = \partial^+\iota_S(x) = 0$, we know that $\chat(\xi_{\overline{K}}) \neq 0$.  Then, pick a non-zero element $y_0 \in \widehat\pi^{-1}(x)$.  If $\widehat\iota(y_0) = 0$, then we're done, so assume that $\widehat\iota(y_0) \neq 0$.  Since $\pi^+(\widehat\iota(y_0)) = \iota_S(x) = 0$, we can find some element $d \in H_*(D)$ such that $\iota^+_D(d) = \widehat\iota(y_0)$.  But now, $y_0 - \widehat\iota_D(d) \in \widehat\pi^{-1}(x)$, and $\widehat\iota(y_0 - \widehat\iota_D(d)) = \widehat\iota(y_0) - \iota^+_D(d) = 0$.  Thus, $\cplusred(\xi_{\overline{K}}) \neq 0$.

\subsection*{Proof of (2)} This time, we will consider $\mathcal{C}_K$, and sufficiently negative integer surgeries on $K$.  As in (1), we start by noting two short exact sequences (where the left map is inclusion), which commute with the obvious vertical inclusions:

\begin{center}
\begin{tikzpicture}[node distance=3cm,auto]
  \node (UL) {$\mathcal{C}_K\{i = 0, j \geq -g+1\}$};
  \node (ULL) [left of=UL, node distance = 3cm] {$0$};
  \node (UM) [right of=UL, node distance=5cm] {$\mathcal{C}_K\{\max(i, j + g - 1) = 0\}$};
  \node (UR) [right of=UM, node distance=5cm] {$\mathcal{C}_K\{i \geq 1, j = -g+1\}$};
  \node (URR) [right of=UR, node distance = 3cm] {$0$};
  \node (LL) [below of=UL, node distance = 2cm] {$\mathcal{C}_K\{i = 0, j \geq -g+1\}$};
  \node (LLL) [left of=LL, node distance = 3cm] {$0$};
  \node (LM) [right of=LL, node distance=5cm] {$\mathcal{C}_K\{\max(i, j + g - 1) \geq 0\}$};
  \node (LR) [right of=LM, node distance=5cm] {$\mathcal{C}_K\{i \geq 1\}$};
  \node (LRR) [right of=LR, node distance = 3cm] {$0$};
  \draw[->] (ULL) to (UL);
  \draw[->] (UL) to (UM);
  \draw[->] (UM) to (UR);
  \draw[->] (UR) to (URR);
  \draw[->] (LLL) to (LL);
  \draw[->] (LL) to (LM);
  \draw[->] (LM) to (LR);
  \draw[->] (LR) to (LRR);
  \draw[->] (UL) to (LL);
  \draw[->] (UM) to (LM);
  \draw[->] (UR) to (LR);
\end{tikzpicture}
\end{center}

Denote the left-hand complexes by $D$.  The two middle complexes are $\widehat{A}'$ and $A'^+$, and the top-right complex is $Q'$ (recall, $\mathcal{C}_K\{i > 1, j = -g+1\}$ is empty).  The two right-hand complexes can also be identified via $U$ with $\mathcal{C}_K\{i = 0, j = -g\}$ and $\CFplus(M)$.  The following commutative diagram comes from the homology long exact sequence associated to the above diagram.

\begin{center}
\begin{tikzpicture}[node distance=3cm,auto]
  \node (UL) {$H_*(D)$};
  \node (UM) [right of=UL, node distance=3cm] {$H_*(\widehat{A}')$};
  \node (UR) [right of=UM, node distance=3cm] {$H_*(Q')$};
  \node (URR) [right of=UR, node distance = 3cm] {$H_*(D)$};
  \node (LL) [below of=UL, node distance = 2cm] {$H_*(D)$};
  \node (LM) [right of=LL, node distance=3cm] {$H_*(A'^+)$};
  \node (LR) [right of=LM, node distance=3cm] {$\HFplus(M)$};
  \node (LRR) [right of=LR, node distance = 3cm] {$H_*(D)$};
  \draw[->] (UL) to node{$\widehat\iota_D$} (UM);
  \draw[->] (UM) to node{$\widehat\pi$} (UR);
  \draw[->] (UR) to node{$\widehat\partial$} (URR);
  \draw[->] (LL) to node{$\iota^+_D$} (LM);
  \draw[->] (LM) to node{$\pi^+$} (LR);
  \draw[->] (LR) to node{$\partial^+$} (LRR);
  \draw[->] (UL) to node{$\cong$} (LL);
  \draw[->] (UM) to node{$\iota_A$} (LM);
  \draw[->] (UR) to node{$\iota_Q$} (LR);
  \draw[->] (URR) to node{$\cong$} (LRR);
\end{tikzpicture}
\end{center}

By Theorem~\ref{thm:fibred HFK}, $H_*(Q')$ is generated by a single element $x$.  The complex $Q' = \mathcal{C}_{K_n}\{i = 0, j = \rm{top}\}$ is dual to $S = \mathcal{C}_{\overline{K}_{-n}}\{i = 0, j = \rm{bottom}\}$, and so if $\chat(\xi_{K_n}) \neq 0$, then any element $y \in \widehat\pi^{-1}(x) \subset H_*(A')$ will pair non-trivially with $\chat(\xi_{K_n})$.   Also, since $Q'$ can be identified via $U$ with $\mathcal{C}_K\{i = 0, j = -g\}$, the map $\iota_Q$ can be identified with the map $$H_*(\mathcal{C}\{i = 0, j = -g\}) \to \HFhat(M) \to \HFplus(M).$$  Thus, based on Theorem~\ref{thm:HF negative surgery formula} and the discussion in Section~\ref{subsec:HF and contact geometry}, we deduce the following facts:

\begin{itemize}
\item $\cplusred(\xi_{\overline{K}_{n}}) \neq 0$ for sufficiently negative integers $n$ if and only if $\widehat\partial(x) = 0$ and there is a $y \in \widehat\pi^{-1}(x)$ such that $\iota_A(y) = 0$;
\item $\cplus(\xi_{\overline{K}}) = 0$ if and only if $\iota_Q(x) = 0$.
\end{itemize}

The rest of the proof follows exactly as in the proof of (1) above.
\end{proof}

\begin{proof}[Proof of Corollary~\ref{FDTC > 1 tight}]
If $K \subset M$ is the fibred knot giving the open book decomposition $(\Sigma, \phi)$, then the open book decomposition associated to $\overline{K}$ is $(\Sigma,\phi^{-1})$, and if $FDTC(\phi) > 1$, then $FDTC(\phi^{-1}) < -1$.  By \cite{BEVHM}, the contact structure $\xi_{\overline{K}_{-1}}$ is supported by $(\Sigma, \phi^{-1} \circ \tau_{\bd})$, where $\tau_{\bd}$ is a positive Dehn twist around the binding of $\Sigma$.  By \cite{KR}, $\rm{FDTC}(\phi^{-1} \circ \tau_{\bd}) = \rm{FDTC}(\phi^{-1}) + 1 < 0$, and thus $\xi_{\overline{K}_{-1}}$ is overtwisted, by \cite{HKM:right1}.  Since $\cplus(\xi_{\overline{K}_{-1}}) = 0$, the corollary now follows from Theorem~\ref{rationalmaintheorem}.
\end{proof}

\begin{proof}[Proof of Corollary~\ref{L-space knot}]
If $M_r(K)$ is an L-space for some $r > 0$, then $\left(-M\right)_{-r}(\overline{K})$ is also an L-space for the same $r$.  This means that $\cplusred(\xi_{\overline{K}_{-r}})$ must vanish, and so we conclude that $\cplus(\xi_K) \neq 0$, by Theorem~\ref{rationalmaintheorem}.
\end{proof}

\begin{proof}[Proof of Corollary~\ref{non-planar contact structures}]
Since $\cplus(\xi_{\overline{K}}) = 0$, Theorem~\ref{rationalmaintheorem} implies that $\cplusred(\xi_{K_r}) \neq 0$.  Thus, $(M_r(K), \xi_{K_r})$ cannot be planar, by \cite{OSS:planar}.
\end{proof}

\begin{proof}[Proof of Corollary~\ref{non-planar Legendrian knots}]
If $L$ is a Legendrian approximation of $K$, then some negative contact surgery on $L$ is equivalent to some admissible transverse surgery on $K$, by \cite{BE:transverse}.  We know from \cite[Theorem~5.10]{Onaran} that if $L$ sits on the page of a planar open book for $\xi_K$, then the result of negative contact surgery on $L$ is also planar, which contradicts Corollary~\ref{non-planar contact structures}.  Since after putting $-L$ on the page of an open book, we can reverse the orientation to get $L$, we see that $-L$ is also not planar.
\end{proof}

\begin{proof}[Proof of Corollary~\ref{cplusred preserved}]
By Theorem~\ref{rationalmaintheorem}, $\cplus(\xi_{\overline{K}_{-r}}) = 0$ for sufficiently large $r$, say for $r > N$.  Then, for any $s > N$, we claim that $\cplus(\xi_{K_s}) \neq 0$.  Indeed, if it were not the case, then by Theorem~\ref{rationalmaintheorem}, sufficiently negative admissible transverse surgery on $\overline{K}_{-s}$ would have non-vanishing $\cplus$ invariant.  However, sufficiently negative admissible transverse surgery on $\overline{K}_{-s}$ gives us $\xi_{\overline{K}_{-s'}}$, for some $N < s' < s$, which we know to have vanishing $\cplus$ invariant.
\end{proof}

Let $K \subset (M, \xi_K)$ be an integrally null-homologous fibred knot, where $\cplus(\xi_K) \neq 0$.  Let
$$R^+_{\rm{red}}(K) = \inf \big\{r \in \Q\,\big|\,\cplusred(\xi_{K_r}) \neq 0\big\},$$
and let
$$R^+(K) = \inf \big\{r \in \Q\,\big|\,\cplus(\xi_{K_r}) \neq 0\big\}.$$
If $\cplusred(\xi_K) = 0$, then we define $R^+_{\rm{red}}(K) = \infty$.  Then, the same approach as the proof of Corollary~\ref{cplusred preserved} will give the following.

\begin{cor}\label{Rplus(red)}
Let $K \subset M$ be an integrally null-homologous fibred knot, such that $\cplus(\xi_{\overline{K}}) \neq 0$.  Then:
\begin{itemize}
\item $\cplus(\xi_{K_r}) = 0$ for all $-\infty \leq r \leq -R^+_{\rm{red}}(\overline{K})$.
\item $\cplus(\xi_{K_r}) \neq 0$ and $\cplusred(\xi_{K_r}) = 0$ for all $-R^+_{\rm{red}}(\overline{K}) < r \leq -R^+(\overline{K})$.
\item $\cplusred(\xi_{K-r}) \neq 0$ for all $-R^+(\overline{K}) < r < 0$.
\end{itemize}
\end{cor}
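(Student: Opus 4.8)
The plan is to deduce everything from Theorem~\ref{rationalmaintheorem} by playing off the monotonicity that is implicit in Lemma~\ref{lemma:admissible preserves non-vanishing}, exactly as in the proof of Corollary~\ref{cplusred preserved}. The starting observation is that admissible transverse surgery on the surgery dual knot interpolates between the various $\xi_{K_r}$: if $r < s < f$, then $\xi_{K_r}$ is obtained from $\xi_{K_s}$ by a (sufficiently negative) admissible transverse surgery on the dual knot $(K_s)' = K$, so by Lemma~\ref{lemma:admissible preserves non-vanishing}, non-vanishing of $\cplus$ (resp.\ $\cplusred$) propagates from smaller $r$ to larger $r$. Equivalently, running the same comparison for the mirror knot $\overline{K}$ in $-M$, we learn that the sets $\{r : \cplus(\xi_{K_r}) \neq 0\}$ and $\{r : \cplusred(\xi_{K_r}) \neq 0\}$ are upward-closed intervals of the form $(R^+(K), 0)$ (or $[R^+(K),0)$) and $(R^+_{\rm red}(K), 0)$, with $R^+_{\rm red} \leq R^+$. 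The corresponding statement for $\overline{K}$ gives upward-closed intervals governed by $R^+(\overline{K})$ and $R^+_{\rm red}(\overline{K})$.

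The key step is to translate Theorem~\ref{rationalmaintheorem}, which is phrased as a statement ``for all $r<f$'', into a statement about the precise thresholds. For the first bullet: by Theorem~\ref{rationalmaintheorem}(1) applied to $\overline{K}_{-s}$ for a fixed $s < 0$, we have $\cplus(\xi_{K_r}) \neq 0$ for all $r$ sufficiently negative relative to $s$ if and only if $\cplusred(\xi_{\overline{K}_{-s}}) = 0$. Now $\cplusred(\xi_{\overline{K}_{-s}}) = 0$ precisely when $-s \leq R^+_{\rm red}(\overline{K})$, i.e.\ when $s \geq -R^+_{\rm red}(\overline{K})$. Since $\cplus(\xi_{K_r}) = 0$ is an downward-closed condition in $r$, ``$\cplus(\xi_{K_r}) \neq 0$ fails for all $r$ sufficiently negative relative to $s$'' is equivalent to ``$\cplus(\xi_{K_r}) = 0$ for all $r \leq s$'' which, letting $s$ range, yields $\cplus(\xi_{K_r}) = 0$ for all $r \leq -R^+_{\rm red}(\overline{K})$, i.e.\ the first bullet (the endpoint $r = -R^+_{\rm red}(\overline{K})$ is handled by the closedness of the vanishing locus or, if one prefers, by a limiting argument along the sequence of integral surgeries). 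The third bullet is the mirror of this: applying Theorem~\ref{rationalmaintheorem}(2) to $\overline{K}_{-s}$ gives $\cplusred(\xi_{K_r}) \neq 0$ for all $r$ sufficiently negative relative to $s$ iff $\cplus(\xi_{\overline{K}_{-s}}) = 0$, which holds iff $s > -R^+(\overline{K})$; since non-vanishing of $\cplusred$ is upward-closed in $r$, this forces $\cplusred(\xi_{K_r}) \neq 0$ for all $-R^+(\overline{K}) < r < 0$. The middle bullet is then simply the complement: for $-R^+_{\rm red}(\overline{K}) < r \leq -R^+(\overline{K})$ we are outside the vanishing range of the first bullet (so $\cplus(\xi_{K_r}) \neq 0$) but not yet inside the non-vanishing range of the third bullet (so $\cplusred(\xi_{K_r}) = 0$), using that $R^+(\overline{K}) \geq R^+_{\rm red}(\overline{K})$ so the interval is genuinely well-defined.

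The main obstacle I anticipate is bookkeeping at the endpoints and the careful reduction from ``sufficiently negative integer surgeries'' (the form in which Theorems~\ref{thm:HF positive surgery formula} and~\ref{thm:HF negative surgery formula}, and hence Theorem~\ref{rationalmaintheorem}, are actually established) to the stated claims about all rational $r$ in a half-open interval. The interpolation argument via Lemma~\ref{lemma:admissible preserves non-vanishing} already upgrades ``integer'' to ``rational'', since any rational $r < f$ admits an admissible transverse surgery factoring through sufficiently negative integral ones; but one must make sure that the infima defining $R^+$ and $R^+_{\rm red}$ are attained (or handled consistently as non-strict/strict inequalities) — this is exactly the subtlety already navigated in Corollary~\ref{cplusred preserved}, and no new idea beyond the monotonicity of the contact invariant under admissible transverse surgery is required. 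A secondary point is verifying that the hypothesis $\cplus(\xi_{\overline{K}}) \neq 0$ guarantees $R^+(\overline{K}) < 0 \leq \infty$ is not vacuous, so that all three ranges in the statement are nonempty or consistently empty; this follows because $\cplus(\xi_{\overline{K}}) \neq 0$ is the $r = 0$ (i.e.\ page-slope, after the framing normalisation) case, forcing $R^+(\overline{K})$ and $R^+_{\rm red}(\overline{K})$ to be genuine finite thresholds in $[0,\infty)$, $[0,\infty]$ respectively.
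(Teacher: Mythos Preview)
Your approach is exactly what the paper intends --- it says only that ``the same approach as the proof of Corollary~\ref{cplusred preserved} will give'' the result, and your outline (monotonicity from Lemma~\ref{lemma:admissible preserves non-vanishing} plus applying Theorem~\ref{rationalmaintheorem} to the surgery duals $K_s$) is precisely that approach.

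That said, there are a few slips worth correcting before this becomes a clean proof.  First, the direction of the interpolation is reversed: for $r < s < 0$ it is $\xi_{K_s}$ that is obtained from $\xi_{K_r}$ by admissible transverse surgery on the dual $K_r$ (compare the proof of Theorem~\ref{rationalmaintheorem}, where vanishing at some $r$ forces vanishing at all sufficiently negative integers, and the explicit slope computation in the proof of Theorem~\ref{thm:genus 1}).  Your stated monotonicity (``non-vanishing propagates from smaller $r$ to larger $r$'') is therefore correct, but it does not follow from your premise as written --- it follows from the opposite one.  Second, when you say ``Theorem~\ref{rationalmaintheorem}(1) applied to $\overline{K}_{-s}$'', you mean applied to $K_s$: the theorem compares $\cplus$ of negative surgeries on a knot with $\cplusred$ of its \emph{mirror}, and it is $\overline{K_s} = \overline{K}_{-s}$ whose $\cplusred$ appears on the right-hand side.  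Third, the inequality you invoke to order the middle interval should read $R^+(\overline{K}) \le R^+_{\rm red}(\overline{K})$, since $\cplusred \neq 0$ implies $\cplus \neq 0$, so the set defining $R^+_{\rm red}$ is contained in the one defining $R^+$.  None of these affect the strategy, but each would cause a reader to stumble.
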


\section{Positive Surgeries}
\label{sec:pos surgeries}

We have thus far restricted our attention to negative surgeries, as in general, the existence of admissible transverse $r$-surgeries on a fibred transverse knot $K \subset (M, \xi_K)$ are only guaranteed for $r$ less than the page slope.  In fact, if $U$ is the unknot in $(S^3,\xi_U)$, then we know that we cannot do admissible transverse $r$-surgery for any $r \geq 0$.  In general, however, if we can thicken the standard neighbourhood of $K$, we can define admissible transverse $r$-surgery for some $r \geq 0$.

Recall from Remark~\ref{multiple tori} that the result of admissible transverse surgery may depend on the chosen neighourhood, and so is not in general well-defined.  Thus far, we have chosen a neighourhood of $K$ that was compatible with the open book decomposition, such that Theorem~\ref{thm:OBD surgery} applied.  For positive surgeries, there is no such choice.  In particular, if $K_r$ is the surgery dual knot to $K$ in $M_r(K)$, then the contact structure coming from admissible transverse $r$-surgery on $K \subset (M, \xi_K)$ bears no relation to $\xi_{K_r}$ (see Remark~\ref{admiss vs inadmiss}).

For the rest of this section, let $K \subset M$ be a null-homologous transverse knot.  If $K$ has a Legendrian approximation $L_n$ with $tb(L_n) = n$, then for any $r < n$, we can find a standard neighbourhood of $K$ inside a standard neighbourhood of $L_n$ which allows admissible transverse $r$-surgery for any $r < n$.

We will focus on the case where $\xi_K$ is overtwisted, as we can say the most in that situation.  Etnyre and van Horn-Morris proved in \cite{EVHM:fibered} that the complement of the fibred transverse knot $K \subset (M,\xi_K)$ is non-loose, {\it ie.\ }the restriction of $\xi_K$ to $M\backslash K$ is tight.  Thus $K$ intersects every overtwisted disc in $(M,\xi_K)$.  Following \cite{BO:non-loose}, we define the {\it depth} $d(K)$ of a transverse knot $K$ to be the minimum of $\big| K \cap D\big|$ over all overtwisted discs $D$.  Since $K$ is non-loose, $d(K) \geq 1$.  The depth of a transverse link, a Legendrian knot, and a Legendrian link can be similarly defined.

We first show that we can pass from transverse knots to Legendrian knots without increasing the depth.

\begin{lem}\label{lem:depth transverse to legendrian}
If $K$ is a transverse knot such that $d(K) = 1$, then there exists a Legendrian approximation $L$ of $K$ such that $d(L) = 1$.
\end{lem}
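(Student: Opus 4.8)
\emph{Plan of proof.} The strategy is to produce a Legendrian approximation $L$ of $K$ that meets some overtwisted disc in a single point, and to observe that $L$ is automatically non-loose, so that $d(L)\geq 1$ and hence $d(L)=1$. I will first dispose of the inequality $d(L)\geq 1$, which holds for \emph{every} Legendrian approximation $L$ of $K$: since $K$ is a transverse push-off of $L$ it is isotopic to the core of a standard contact neighbourhood $N(L)$, so removing a thin tubular neighbourhood of $K$ inside $N(L)$ leaves a collar $T^2\times I$, and therefore $M\setminus N(L)$ embeds into $M\setminus N(K)$. Because $d(K)=1$, no overtwisted disc is disjoint from $K$, so $\xi|_{M\setminus K}$ is tight, and hence so is its restriction to the submanifold $M\setminus N(L)$. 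Thus $L$ is non-loose, and it remains only to construct a Legendrian approximation with $d(L)\leq 1$.

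Fix an overtwisted disc $D$ with $K\cap D=\{p\}$; here $p$ lies in the interior of $D$, because $K$ is transverse to $\xi$ while $\partial D$ is tangent to $\xi$. Choose a standard contact neighbourhood $N=N_\epsilon(K)$ with $\epsilon$ so small that $\partial D$ is disjoint from $N$, and then, after an ambient isotopy supported in $N$ which fixes $K$ and $\partial D$ (hence keeps $D$ an overtwisted disc), arrange that $D\cap N$ is a single meridional disc $\Delta$ of the solid torus $N$ with $\Delta\cap K=\{p\}$. This last reduction is a standard normal-form / innermost-disc argument: for $\epsilon$ small, $D\cap N$ consists of the meridional disc through $p$ together with finitely many boundary-parallel ``cap'' pieces, and these can be pushed out of $N$ since $N$ is irreducible. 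Now take any Legendrian approximation $L_0$ of $K$ lying inside $N$ (such $L_0$ exist in every neighbourhood of $K$); as $L_0$ winds once around $N$, its algebraic intersection number with $\Delta$ is $1$.

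The main step is to modify $L_0$ within $N$ to a Legendrian knot $L$ with $|L\cap\Delta|=1$ without changing its transverse push-off. If $|L_0\cap\Delta|>1$ then, since the algebraic count equals $1$, there are two intersection points of opposite sign that are consecutive along $L_0$; after putting $L_0$ in minimal position with $\Delta$, the sub-arc of $L_0$ joining them, together with a sub-arc of $\Delta$, bounds a disc in $N$, across which $L_0$ may be pushed to cancel the pair. Realising this cancellation by a Legendrian isotopy may force the insertion of a negative stabilisation, but both Legendrian isotopy and negative stabilisation preserve the transverse push-off. Iterating, we obtain $L$ with $|L\cap\Delta|=1$; it lies in $N$ and has transverse push-off transverse-isotopic to $K$, so it is a Legendrian approximation of $K$. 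Since $L\subset N$ meets $D$ only inside $D\cap N=\Delta$, we get $|L\cap D|=1$, so $d(L)\leq 1$, and combined with the first paragraph, $d(L)=1$.

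The hard part is precisely this last modification, together with the normal-form step for $D$: one must carry out smooth-topological operations (innermost discs, minimal position, cancelling a pair of intersection points of a knot with a disc) while keeping the knot Legendrian and not disturbing its transverse push-off. Everything else — the existence of $L_0$ inside $N$, the invariance of the transverse push-off under Legendrian isotopy and negative stabilisation, and the non-looseness of Legendrian approximations of a non-loose transverse knot — is standard.
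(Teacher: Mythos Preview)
Your lower-bound argument ($d(L)\geq 1$ for every Legendrian approximation, via the transverse push-off) is correct and matches the paper's closing paragraph. Where your plan diverges --- and where it has a genuine gap --- is in the upper bound. You take an arbitrary Legendrian approximation $L_0\subset N$ and propose to cancel excess intersections with the meridional disc $\Delta$ in pairs via Whitney-type moves, asserting that each cancellation can be realised by a Legendrian isotopy possibly ``forc[ing] the insertion of a negative stabilisation.'' You yourself flag this as ``the hard part,'' and you do not carry it out. It is not a standard fact: in general a smooth isotopy of Legendrians cannot be upgraded to a Legendrian isotopy using only \emph{negative} stabilisations --- one typically needs both signs, and a positive stabilisation would change the transverse push-off. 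Making this step rigorous would require a genuine local analysis of Legendrian arcs relative to a convex disc in a tight solid torus, and nothing in your outline supplies one. (Your normal-form step for $D\cap N$, by contrast, is unproblematic: just take $N$ small enough that $K\pitchfork D$ forces $D\cap N$ to be a single meridional disc.)

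The paper sidesteps the cancellation issue entirely by taking a constructive rather than a reductive approach. It fixes an explicit local model $(S^1\times\R^2,\xi_{\rm rot})$ for a neighbourhood of $K$ near the intersection point, with $K$ modelled on $\{r=\pi/4,\,\theta=0\}$ and the overtwisted disc on $\{z=0,\,r\leq\pi\}$, and then writes down a parametrised Legendrian curve $L_{-n}(t)=\bigl(\alpha z(t),\,\pi/4+\epsilon\cos(nt),\,-\alpha f(nt)\bigr)$ with a carefully designed $2\pi$-periodic function $f$. By tuning the parameters $n$, $m$, $\epsilon$, $\alpha$ it arranges directly that $\big|L_{-n}\cap\{z=z_0\}\big|=1$ for a suitable $z_0$, with no cancellation argument needed. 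So the paper \emph{builds} the right Legendrian from scratch rather than trying to repair a generic one; this explicit construction is where the actual content of the proof lies, and your plan does not yet contain an analogue of it.
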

\begin{proof}
We consider a neighbourhood of $\{r = \pi/4, \theta = 0\} \subset (S^1 \times \R^2,\xi_{\rm{rot}} = \ker(\cos r\, dz + r\sin r\,d\theta))$ as a generic model for a neighbourhood of $K$ (where $z \sim z + 1$), and let the overtwisted disc $D = \{z = 0, r \leq \pi\}$.  It would seem preferable to consider $\{r = 0\}$, but since that knot passes through the unique elliptic point of the characteristic foliation on $D$, it would not represent a generic model.

The contact planes away from $r = 0$ are spanned by $\frac{\bd}{\bd r}$ and the vector $X(r) = -\tan r\,\frac{\bd}{\bd z} + \frac{\bd}{\bd \theta}$.  We define a Legendrian approximation $L_{-n}$ of $K'$ by
$$L_n(t) = \begin{pmatrix}\alpha\, z(t)\\\pi/4 + \epsilon \cos(nt)\\ -\alpha f(nt)\end{pmatrix},$$
in the coordinates $(z,r,\theta)$, for some positive real number $\alpha$, where $$z'(t) = -\frac{n f'(nt)}{\tan\left(\pi/4 + \epsilon \cos(nt)\right)}, \, z(0) = 0.$$  We choose $f : \R \to \R$ to be a smooth function such that
$$f(t) = \begin{cases}\frac{\sin t}{m} & t \in \left(0, \frac{\pi}{2}-\delta\right)\cup\left(\frac{3\pi}{2} + \delta, 2\pi\right), \\ m\sin t & t \in \left(\frac{\pi}{2} + \delta, \frac{3\pi}{2} - \delta\right),\end{cases}$$
where $m$ is a positive integer and $\delta > 0$ is small. We define $f$ to be periodic with period $2\pi$, and we define $f$ on $\left(\frac{\pi}{2} - \delta, \frac{\pi}{2} + \delta\right)\cup\left(\frac{3\pi}{2} - \delta, \frac{3\pi}{2} + \delta\right)$ such that $f$ is smooth.

Then, given any $\epsilon > 0$, choose $n$ to be a sufficiently large positive integer such that $z(2\pi) > 1$ and $\max |f|/z(2\pi) < \arctan \left(4\epsilon/\pi\right)$; choose $m$ to be a sufficiently large positive integer such that there exists some value $z_0$ of $z$ such that $\big| L_n \cap \{z = z_0 \}\big| = 1$.  If we now let $\alpha$ be $1/z(2\pi)$, then $L_{-n}$ will be a closed Legendrian in an $\epsilon$-neighbourhood of $K'$.  Note that $tb(L_{-n}) = -n$, and $\big|L_{-n}\cap\{ z = z_0/z(2\pi), r \leq \pi\}\big| = 1$.  Thus, $d(L_{-n}) \leq 1$.

To show that $d(L_{-n}) \neq 0$, note that a transverse push-off of $L_{-n}$ can be made arbitrarily close to $L_{-n}$.  Thus, if there exists an overtwisted disc in $(M, \xi)$ that is disjoint from $L_{-n}$, it is also disjoint from a sufficiently small neighbourhood of $L_{-n}$.  Then, the tranverse push-off $K$ of $L_{-n}$ would also be disjoint from this overtwisted disc.  However, $d(K) = 1$, and so this cannot happen.
\end{proof}

The next step is to show that if $d(L) = 1$, then $L$ can be negatively destabilised.

\begin{lem}\label{lem:destabilise along OT disc}
Let $K \subset (M,\xi)$ be a non-loose transverse knot, and let $L_n$ be a Legendrian approximation of $K$ with $tb(L_n) = n$.  If $d(K) = d(L_n) = 1$, then $L_n$ can be negatively destabilised to $L_m$ (hence $L_m$ is also a Legendrian approximation of $K$) for any $n \geq m$, where $tb(L_m) = m$ and $d(L_m) = 1$.
\end{lem}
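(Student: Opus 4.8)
The plan is to analyze the convex overtwisted disc that $K$ punctures exactly once, cut the ambient manifold along it, and read off the destabilization from the resulting contact structure on the cut-open manifold. Concretely, let $D$ be an overtwisted disc with $|K \cap D| = 1$, which exists since $d(K) = 1$; by a small isotopy we may take $D$ to be convex with Legendrian boundary and to meet $L_n$ transversally in a single point $p$ (using that $L_n$ is $C^0$-close to $K$, so $d(L_n)=1$ is realized by the same disc after perturbation — this is essentially the argument at the end of the proof of Lemma~\ref{lem:depth transverse to legendrian}). The key geometric input is that near $p$ the disc $D$ and the Legendrian $L_n$ interact in a standard model: a single transverse intersection of a convex surface with a Legendrian arc looks like a Legendrian arc passing through a convex disc, and one can arrange the dividing set of $D$ so that $p$ lies on the disc in the ``destabilizing'' position relative to the dividing curves (the boundary-parallel dividing arc cutting off a half-disc containing $p$). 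This is exactly the local picture that detects a stabilization: the arc of $L_n$ on one side of $D$ together with a pushoff across $D$ bounds a bypass-like region, so $L_n$ admits a negative destabilization supported near $D$.

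The steps I would carry out, in order: \textbf{(i)} Promote the abstract overtwisted disc for $d(K)=1$ to a convex overtwisted disc $D$ meeting $L_n$ once transversally, using genericity and the $C^0$-closeness of $L_n$ to $K$; record that $\partial D$ is Legendrian and that the dividing set $\Gamma_D$ of $D$ contains (at least) one boundary-parallel arc, since $D$ is overtwisted (the characteristic foliation has a limit cycle / the dividing set is non-trivial of the overtwisted type). \textbf{(ii)} Isotope $L_n$, rel the rest of the knot, so that the intersection point $p$ sits inside the half-disc cut off by a boundary-parallel dividing arc of $D$; this uses the Giroux flexibility / realization principle on $D$ to move the characteristic foliation and hence the intersection. \textbf{(iii)} Identify the local model near $D \cup L_n$ with the standard neighborhood of a stabilized Legendrian arc: the presence of the boundary-parallel dividing arc enclosing $p$ exhibits a negative stabilization point on $L_n$, so $L_n = S_-(L_{n-1})$ for a Legendrian $L_{n-1}$, and crucially the destabilizing isotopy is supported in a neighborhood of $D$, which can be taken disjoint from a transverse pushoff of the rest of $L_n$, so $L_{n-1}$ is still a Legendrian approximation of $K$ and $tb(L_{n-1}) = n-1$. \textbf{(iv)} Check $d(L_{n-1}) = 1$: it is $\leq 1$ because the destabilized knot still meets $D$ (the intersection point survives, only the extra twisting near it is removed), and it is $\geq 1$ because a transverse pushoff of $L_{n-1}$ is still a transverse approximation of the non-loose $K$ (any OT disc disjoint from $L_{n-1}$ would, after pushing off, be disjoint from a transverse approximation of $K$, contradicting non-looseness — verbatim the argument ending the proof of Lemma~\ref{lem:depth transverse to legendrian}). \textbf{(v)} Iterate: since $d(L_{n-1})=1$ and it is still a Legendrian approximation of $K$, repeat (i)--(iv) to get $L_{n-2}, \ldots, L_m$ for any $m \leq n$, each with $tb = $ the appropriate value and depth $1$.

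The main obstacle I anticipate is step (iii): making precise that ``a single transverse intersection of $L_n$ with a convex overtwisted disc, with the intersection point enclosed by a boundary-parallel dividing arc, forces a negative destabilization of $L_n$.'' The subtlety is that the overtwisted disc is not a Seifert surface for $L_n$ and the usual bypass/destabilization criteria are phrased for convex surfaces containing the Legendrian; here $L_n$ pierces $D$ rather than lying on it. I expect the right tool is either (a) to build an annulus $A$ from the arc of $L_n$ near $p$ and a Legendrian pushoff of a dividing arc of $D$, show $A$ is convex with a dividing set that contains a boundary-parallel dead-end arc, and extract the bypass; or (b) to use the local normal form of Etnyre--van Horn-Morris and Baker--Onaran for non-loose knots near an overtwisted disc, where the depth-one condition is exactly what guarantees a ``folding'' of the Legendrian that is a stabilization. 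Either route requires care that the bypass attachment is \emph{negative} (so we get a negative, not positive, destabilization) — this should follow from the sign conventions on the dividing set of an overtwisted disc, but it is the place where I would be most careful. The rest — genericity, the non-looseness argument for $d \geq 1$, and the iteration — is routine given the techniques already used in the preceding lemma.
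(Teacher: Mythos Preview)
Your plan has a genuine gap, and it stems from a misconception in step (i) that propagates through (ii) and (iii). A convex overtwisted disc $D$ has Legendrian boundary with $tb(\partial D) = 0$, so the dividing set $\Gamma_D$ meets $\partial D$ in $2\,|tb(\partial D)| = 0$ points: $\Gamma_D$ consists of a single closed curve parallel to $\partial D$, not arcs. There is no ``boundary-parallel dividing arc'' to enclose the intersection point $p$, and so the local model you invoke in (iii) --- a Legendrian arc piercing a half-disc cut off by such an arc --- does not exist. You correctly flag (iii) as the main obstacle, but the difficulty is more basic than a sign check: the mechanism you describe simply is not there, and neither of your suggested fixes (a) or (b) is carried far enough to repair it. (Your step (iv), the non-looseness argument for $d(L_m)\geq 1$, is fine and matches the paper.)

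The paper avoids this by never trying to read the destabilisation off $D$ directly. Instead, take the convex boundary torus $T'$ of a standard neighbourhood of $L_n$ and use the Legendrian Realisation Principle to arrange that $D \cap T'$ is a Legendrian meridian $\gamma$ with $tb(\gamma) = -1$. Now the \emph{annulus} $A \subset D$ with $\partial A = \partial D \cup \gamma$ can be made convex; its dividing set meets $\gamma$ twice and $\partial D$ not at all, and non-looseness of $L_n$ (via Giroux's criterion) forbids closed components, so $\Gamma_A$ is a single arc parallel to $\gamma$. This is a bypass for $T'$. Attaching it yields a torus $T''$ with meridional dividing curves, and the region between $T'$ and $T''$ is a basic slice containing convex tori of every integral slope $m$ beyond $n$; the Legendrian divides of these tori are the desired $L_m$, obtained all at once rather than by iteration. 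The sign of the destabilisation is then pinned down by gluing this basic slice to the one coming from a known negative stabilisation of $L_n$ and invoking Honda's classification of tight contact structures on basic slices.
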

\begin{proof}
Let $T$ be the convex boundary of a regular neighbourhood of $L_n$.  Let $\gamma$ be a curve on $T$ isotopic to a meridian of $L_n$.  Using the Legendrian Realisation Principle (\cite[Theorem~3.7]{Honda:classification1}), we can isotope $T$ to another convex torus $T'$ such that $\gamma$ is Legendrian with $tb(\gamma) = -1$.

Given an overtwisted disc $D$ such that $\big|L_n \cap D\big| = 1$, isotope $D$ such that its intersection with $T'$ is exactly $\gamma$.  Let $A \subset D$ be the annulus with boundary $\bd D \cup \gamma$.  Since the twisting of the contact planes along the boundary components of $A$ is non-positive with respect to the framing given by $A$, we can perturb $A $ to be convex. Consider the dividing curves $\Gamma$ on $A$: since $tb(\gamma) = -1$ and $tb(\bd D) = 0$, $\Gamma$ intersects $\gamma$ twice and $\bd D$ zero times.  Since $L_n$ is a non-loose Legendrian knot (as $K$ is a non-loose transverse knot), the dividing curves $\Gamma$ can have no contractible components, by Giroux's Criterion (\cite{Giroux:convex}), so, $\Gamma$ is an arc parallel to $\gamma$, and gives a bypass.

If $T''$ is the convex torus we get from the bypass $A$ on $T'$, the dividing curves on $T''$ are meridional.  The region in between $T'$ and $T''$ is a basic slice, and we can find convex tori with dividing curves of slope $m$ for any $n \leq m$.  We define $L_m$ to be a Legendrian divide of a convex torus with dividing curves of slope $m$.

In order to determine whether $L_m$ is a positive or negative destabilisation of $L_n$, consider the convex torus $T'''$, the boundary of a regular neighbourhood of $L_{n-1}$, the negative stabilisation of $L_n$ (we pick a negative stabilisation since it is non-loose).  If we glue the basic slice from $T'''$ to $T'$ to the basic slice from $T'$ to $T'''$, we get a $T^2 \times I$ with a tight contact structure, and convex boundaries with dividing curves of slope $n-1$ and $\infty$, {\it ie.\ }another basic slice.  By the classification in \cite{Honda:classification1} of tight contact structures on basic slices, the signs of the basic slices that we glue together must agree.  Thus, since $L_{n-1}$ is a negative stabilisation of $L_n$, so too is $L_n$ an $(m-n)$-fold negative stabilisation of $L_m$.

To show that $d(L_m) \geq 1$, note (as in the proof of Lemma~\ref{lem:depth transverse to legendrian}) that $K$ is a transverse push-off of $L_m$, so $L_m$ must be non-loose.  Then, since $\big|L_m \cap D\big| = 1$, $d(L_m) \leq 1$, and so we conclude that $d(L_m) = 1$.
\end{proof}

Finally, we use these destabilisations to define admissible transverse $r$-surgery on $K$, for $r \geq 0$.

\begin{proof}[Proof of Theorem~\ref{positive surgeries}]
Ito and Kawamuro showed in \cite[Corollary~3.4]{IK:coverings} that the binding of an open book $(\Sigma, \phi)$ is a transverse link of depth 1 if and only if the monodromy is not right-veering.  With our hypotheses, we conclude that $d(K) = 1$.

Let $(\Sigma, \phi)$ be the open book defined by $K$.  By Theorem~\ref{thm:OBD surgery}, an open book for admissible transverse $(-1/n)$-surgery on $K$ is given by $(\Sigma,\phi\circ\tau_{\bd}^n)$, where $\tau_{\bd}$ is a Dehn twist about the binding component of $\Sigma$.  By Corollary~\ref{FDTC > 1 tight}, this supports a contact structure $\xi_{K_{-1/n}}$ that satisfies $\cplusred(\xi_{K_{-1/n}}) \neq 0$, for sufficiently large integers $n$.

Since $d(K) = 1$, Lemma~\ref{lem:destabilise along OT disc} guarantees the existence of Legendrian approximations $L_m$ of $K$ with $tb(L_m) = m$, for any $m$. We will use the neighbourhood of $K$ coming from a standard neighbourhood of $L_m$ to define transverse admissible $r$-surgery on $K$, for $r < m$. This neighbourhood also corresponds to a neighbourhood of $K_{-1/n} \subset (M_{-1/n}(K), \xi_{K_{-1/n}})$, and admissible transverse $r$-surgery on $K \subset (M, \xi_K)$ for $-1/n < r < m$ is equivalent to some admissible transverse surgery on $K_{-1/n} \subset (M_{-1/n}(K), \xi_{K_{-1/n}})$ using the corresponding neighbourhood. The theorem then follows from Lemma~\ref{lemma:admissible preserves non-vanishing}.
\end{proof}

\begin{proof}[Proof of Corollary~\ref{where surgery is not tight}]
The first statement, for $\HFred(M) = 0$, follows from Theorems~\ref{maintheorem}~and~\ref{positive surgeries}.

For the second statement in the theorem, consider now a general $M$.  The manifold $M_0(K)$ supports a tight contact structure in both its orientations (see for example \cite{HKM:haken}).  If $\cplusred(\xi_K)$ and $\cplusred(\xi_{\overline{K}})$ both vanish, then $\cplus(\xi_{K_r})$ and $\cplus(\xi_{\overline{K}_r})$ do not vanish for any $r < 0$, by Theorem~\ref{rationalmaintheorem}.  So assume $\cplusred(\xi_K) \neq 0$.  By Lemma~\ref{lemma:admissible preserves non-vanishing}, $\cplus(\xi_{K_r}) \neq 0$ for all $r < 0$.  By Corollary~\ref{Rplus(red)}, at least one of $\cplus(\xi_{K_r})$ and $\cplus(\xi_{\overline{K}_{-r}})$ is non-vanishing for any $r > 0$, unless $r = R^+(K) = R^+_{\rm red}(K)$.

For the last statement, note that at least one of $\phi_K$ and $\phi_{\overline{K}}$ is not right-veering (unless $M = \#\left(S^1 \times S^2\right)$ and $\phi_K = \rm{id}$, whereupon $\HFred(M) = 0$, and we're done, as above).  Assume $\phi_{\overline{K}}$ is not right-veering, so $\xi_{\overline{K}}$ is overtwisted. Then $(M_r(K), \xi_{K_r})$ is tight for all $r < 0$, by Theorem~\ref{rationalmaintheorem}, and $-\left(M_r(K)\right) = (-M)_{-r}(\overline{K})$ supports a tight contact structure for all $r < 0$, by Theorem~\ref{positive surgeries}.
\end{proof}

\begin{figure}[htbp]
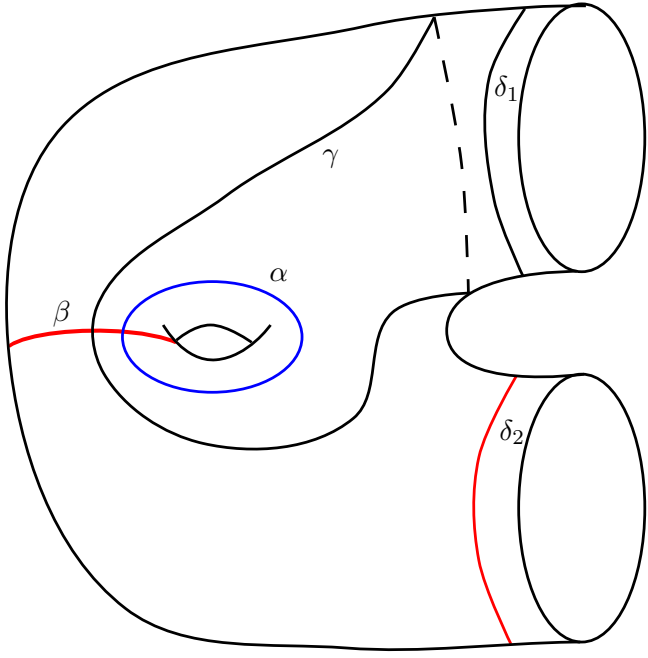

\begin{center}
\begin{overpic}[scale=1.3,tics=20]{"figureeightOTboundary"}
\put(100,140){\large $\alpha$}
\put(18,125){\large $\beta$}
\put(185,210){\large $\delta_1$}
\put(187,80){\large $\delta_2$}
\put(120,185){\large $\gamma$}
\end{overpic}
\caption{The open book $(\Sigma,\tau_\alpha^{-1}\tau_\beta\tau_{\delta_2})$ is a positive stabilisation of the open book given by the figure-eight knot $K\subset S^3$. The curve $\delta_1$ is a push-off of $K$, and $\gamma$ is the boundary of an overtwisted disc in $\xi_K$.}
\label{fig:figure eight OBD}
\end{center}
\end{figure}

If the open book $(\Sigma, \phi)$ is a negative stabilisation of another open book (whereupon it is not right-veering), Baker and Onaran explicitly identified an overtwisted disc that intersects $K$ once and has boundary on the page of a stabilisation of $(\Sigma, \phi)$ (see \cite[Theorem~5.2.3]{BO:non-loose}).  Given a transverse knot $K$, a Legendrian approximation $L$ of $K$, and an overtwisted disc $D$ such that $\big| L \cap D \big| = 1$, a careful tracking of dividing curves will show that contact $(-1)$-surgery on $\bd D$ results in a contact manifold contactomorphic to the starting one, but where $L$ gets identified with its negative destabilisation.  We can use this and Baker and Onaran's identification of $\bd D$ on a page to construct open books for admissible transverse $r$-surgery on $K$ for $r \geq 0$.

\begin{example}
Let $K \subset (S^3,\xi_K)$ be the figure-eight knot.  After a positive stabilisation using a boundary-parallel arc, we use \cite[Theorem~5.2.3]{BO:non-loose} to identify $\gamma$ in Figure~\ref{fig:figure eight OBD} as the boundary of an overtwisted disc $D$ which $K$ intersects once.

In the construction of $S^3$ from the open book decomposition $(\Sigma,\tau_\alpha^{-1}\tau_\beta\tau_{\delta_2})$, the boundary component parallel to $\delta_1$ is identified with the figure-eight knot $K$, and the framing on $K$ given by $\Sigma$ is $-1$ (with respect to the Seifert framing).  The open book  $(\Sigma,\tau_\alpha^{-1}\tau_\beta\tau_\gamma^n\tau_{\delta_2})$, for some $n \in \Z$, also supports $(S^3,\xi_K)$, but now the framing given to $K$ by $\Sigma$ is $n-1$, since composing the monodromy with $\tau_{\gamma}^n$ is equivalent to contact $(-1)$-surgery on $n$ push-offs of $\gamma = \bd D$.  Thus, the open book $(\Sigma,\tau_\alpha^{-1}\tau_\beta\tau_\gamma^n\tau_{\delta_1}\tau_{\delta_2})$ supports the tight contact manifold $(S^3_{n-2}(K),\xi_{n-2})$, where $\xi_{n-2}$ is the result of admissible transverse $(n-2)$-surgery on $K$.
\end{example}

\section{Further Questions}
\label{sec:questions}

In this section, we discuss how Theorem~\ref{rationalmaintheorem} might be brought to bear on some open questions.

\subsection{$\cplus$ versus $\chat$}
\label{subsec:cplus versus chat}

Given a contact manifold $(M, \xi)$, we know that $\cplus(\xi)$ vanishes if $\chat(\xi)$ does.  The converse, however, remains open: does the vanishing of $\cplus(\xi)$ imply the vanishing of $\chat(\xi)$?

To explore this, we define two invariants of elements of $\HFhat(M)$, using $\CF^{0,i}(M) = \ker U^i \subset \CFplus(M)$, and its homology $\HF^{0,i}(M)$.

\begin{definition}
Given $x \in \HFhat(M)$, let the \textit{vanishing order} $\VO(x)$ of $x$ be the minimum $i$ such that the image of $x$ in $\HFhat(M) \to \HF^{0,i}(M)$ is trivial (and it equals $\infty$ is no such finite $i$ exists).  Given $x \neq 0 \in \HFhat(M)$, let the \textit{dual vanishing order} $\DVO(x)$ of $x$ be the minimum of $\VO(y)$ over all $y \in \HFhat(-M)$ that pair non-trivially with $x$ (and it equals $\infty$ if no such $y$ exists).
\end{definition}

Note that $\chat(\xi) \neq 0$ if and only if $\VO(\chat(\xi)) > 0$; $\cplus(\xi) \neq 0$ if and only if $\VO(\chat(\xi)) = \infty$; and $\cplusred(\xi) \neq 0$ if and only if $\VO(\chat(\xi)) = \infty$ and $\DVO(\chat(\xi)) < \infty$.

Let $K \subset M$ be a fibred knot, fix an integer $m \geq 1$, and consider the following short exact sequences, where the bottom-right complex $C = \mathcal{C}_{\overline{K}}\{1 \leq \max(i,j-g+1) \leq m\}\cup\mathcal{C}_{\overline{K}}\{i = m, j = g+m\}$.

\begin{center}
\begin{tikzpicture}[node distance=3cm,auto]
  \node (UL) {$\mathcal{C}_{\overline{K}}\{i = 0, j \leq g-1\}$};
  \node (ULL) [left of=UL, node distance = 3cm] {$0$};
  \node (UM) [right of=UL, node distance=4cm] {$\mathcal{C}_{\overline{K}}\{i = 0\}$};
  \node (UR) [right of=UM, node distance=4cm] {$\mathcal{C}_{\overline{K}}\{i = 0, j = g\}$};
  \node (URR) [right of=UR, node distance = 3cm] {$0$};
  \node (LL) [below of=UL, node distance = 2cm] {$\mathcal{C}_{\overline{K}}\{i = 0, j \leq g-1\}$};
  \node (LLL) [left of=LL, node distance = 3cm] {$0$};
  \node (LM) [right of=LL, node distance=4cm] {$\mathcal{C}_{\overline{K}}\{0 \leq i \leq m\}$};
  \node (LR) [right of=LM, node distance=4cm] {$C$};
  \node (LRR) [right of=LR, node distance = 3cm] {$0$};
  \draw[->] (ULL) to (UL);
  \draw[->] (UL) to (UM);
  \draw[->] (UM) to (UR);
  \draw[->] (UR) to (URR);
  \draw[->] (LLL) to (LL);
  \draw[->] (LL) to (LM);
  \draw[->] (LM) to (LR);
  \draw[->] (LR) to (LRR);
  \draw[->] (UL) to (LL);
  \draw[->] (UM) to (LM);
  \draw[->] (UR) to (LR);
\end{tikzpicture}
\end{center}

Then, the same proof as in Theorem~\ref{rationalmaintheorem}(1), and the similar version corresponding to Theorem~\ref{rationalmaintheorem}(2), gives the following.

\begin{thm}\label{vanishing order}
Let $K \subset M$ be a fibred knot.  Then for all sufficiently negative integers $n$:
\begin{enumerate}
\item $\DVO(\chat(\xi_{\overline{K}})) - 1 \leq \VO(\chat(\xi_{K_n})) \leq \DVO(\chat(\xi_{\overline{K}}))$.
\item $\DVO(\chat(\xi_{K_n})) - 1 \leq \VO(\chat(\xi_{\overline{K}})) \leq \DVO(\chat(\xi_{K_n}))$.
\end{enumerate}
\end{thm}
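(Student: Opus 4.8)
The plan is to adapt the proof of Theorem~\ref{rationalmaintheorem} by replacing the ``zero versus infinity'' dichotomy for the contact invariants with a quantitative count of how far one must push before the relevant class dies. First I would set up exactly the commutative diagram of long exact sequences displayed just before the statement, with the bottom row built from the truncation complex $C = \mathcal{C}_{\overline{K}}\{1 \leq \max(i,j-g+1) \leq m\}\cup\mathcal{C}_{\overline{K}}\{i = m, j = g+m\}$. The key observation is that, via multiplication by $U$, the quotient complex $\mathcal{C}_{\overline{K}}\{i=0,j=g\}$ is identified with $S$, and the bottom-right complex $C$ is identified (for $n$ sufficiently negative) with $\CF^{0,m}(-M_n(K))$, by Theorem~\ref{thm:HF positive surgery formula} together with an inspection of the surgery formula truncated at $U$-power $m$. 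Under these identifications the vertical map $H_*(S) \to H_*(C)$ becomes the map $\HFhat(-M_n(K)) \to \HF^{0,m}(-M_n(K))$ evaluated on (a representative of) $\chat(\xi_{K_n})$, whose vanishing for all $m \leq m_0$ but not $m_0+1$ is exactly the statement $\VO(\chat(\xi_{K_n})) = m_0$.

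The heart of the argument is then the following chain-level bookkeeping, parallel to the proof of (1) in Theorem~\ref{rationalmaintheorem}. The single generator $x \in H_*(S)$ pairs non-trivially with $\chat(\xi_{\overline{K}})$ whenever the latter is non-zero (because $\mathcal{C}_{\overline{K}}\{i=0,j=g\}$ is dual to $\mathcal{C}_K\{i=0,j=-g\}$), so $\VO(\chat(\xi_{K_n}))$ is governed by how $x$ propagates through the bottom row $H_*(D) \to H_*(\mathcal{C}_{\overline{K}}\{0\leq i\leq m\}) \to H_*(C)$. On the other hand $\DVO(\chat(\xi_{\overline{K}}))$ is, by definition, the minimum over all $y \in \HFhat(-M)$ pairing non-trivially with $\chat(\xi_{\overline{K}})$ of the smallest $m$ with $y \mapsto 0$ in $\HF^{0,m}(-M)$; and the candidates $y$ are precisely the elements of $\widehat\pi^{-1}(x)$. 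One inclusion ($\VO(\chat(\xi_{K_n})) \leq \DVO(\chat(\xi_{\overline{K}}))$): if some $y \in \widehat\pi^{-1}(x)$ dies at level $m$ in $\HF^{0,m}(-M)$, commutativity of the diagram pushes the image of $x$ to zero in $H_*(C)$, so $\chat(\xi_{K_n})$ dies by level $m$. The other inclusion ($\VO(\chat(\xi_{K_n})) \geq \DVO(\chat(\xi_{\overline{K}})) - 1$): if the image of $x$ in $H_*(C)$ is zero, then as in the proof of Theorem~\ref{rationalmaintheorem}, exactness lets us correct an arbitrary lift $y_0 \in \widehat\pi^{-1}(x)$ by an element of $\im \widehat\iota_D$ to obtain a lift $y$ that maps to zero one level down, i.e.\ in $\HF^{0,m-1}(-M)$; this costs the ``$-1$''. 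Part (2) is the mirror computation, using $\mathcal{C}_K$, sufficiently negative surgeries, $Q'$ in place of $S$, and Theorem~\ref{thm:HF negative surgery formula} in place of Theorem~\ref{thm:HF positive surgery formula}; it is formally identical after swapping the roles of $K$ and $\overline{K}$ and of $i \geq 0$ with $i \leq 0$.

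The main obstacle I anticipate is verifying cleanly that the truncated surgery formula behaves as claimed --- that is, that for all sufficiently negative $n$ the complex $C$ really does compute $\HF^{0,m}$ of the surgered manifold, uniformly in $m$ (one needs $n$ negative enough in terms of $m$, which is why the statement is ``for all sufficiently negative $n$'' with $m$ fixed), and that the identification is compatible with the vertical $U$-inclusions across the whole diagram. This requires tracking the Alexander/algebraic bigrading carefully through the large-surgery isomorphism, exactly the point flagged in the remark after Theorem~\ref{thm:HF negative surgery formula}; I would handle it by citing \cite[Theorem~4.2]{HP} and \cite{OS:rationalsurgery} and checking that the relevant sub/quotient complexes of $\mathcal{C}_{\overline{K}}$ match the sub/quotient complexes of $\CFplus(-M_n(K))$ defining $\CF^{0,m}$. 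The remaining diagram chases are routine given the template already established in the proof of Theorem~\ref{rationalmaintheorem}, so I would present them compactly rather than in full.
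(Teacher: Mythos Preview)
Your overall strategy is exactly the paper's: run the diagram chase of Theorem~\ref{rationalmaintheorem}(1) with the bottom row truncated at level $m$, and do the mirror argument for (2). However, your bookkeeping is off in a way that matters.

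The complex $C$ is \emph{not} $\CF^{0,m}(-M_n(K))$. After the $U$-shift identifying the bottom-right quotient with a subcomplex of $A^+\simeq\CFplus(-M_n(K))$, one has strictly
\[
\ker\bigl(U\big|_{A^+}\bigr)^m \;\subsetneq\; C \;\subsetneq\; \ker\bigl(U\big|_{A^+}\bigr)^{m+1},
\]
because $C$ contains the single extra corner $\{i=m,\,j=g+m\}$, which sits at $\max(i,j-g+1)=m+1$. Likewise the bottom-middle complex $\mathcal{C}_{\overline{K}}\{0\leq i\leq m\}$ is $\CF^{0,m+1}(-M)$, not $\CF^{0,m}(-M)$. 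This is precisely the content of the remark following the theorem, and it is the actual source of the $\pm 1$: the diagram chase gives the exact equivalence ``$x$ dies in $H_*(C)$ iff $\DVO(\chat(\xi_{\overline{K}}))\leq m+1$'', but the sandwich above only lets you conclude $\VO(\chat(\xi_{K_n}))\leq m$ implies $x$ dies in $H_*(C)$, and $x$ dies in $H_*(C)$ implies $\VO(\chat(\xi_{K_n}))\leq m+1$.

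Consequently, your explanation of where the $-1$ comes from is wrong. The correction step (replacing $y_0$ by $y_0-\widehat\iota_D(d)$) does \emph{not} lose a level: exactly as in Theorem~\ref{rationalmaintheorem}, it produces a $y\in\widehat\pi^{-1}(x)$ whose image in the \emph{same} bottom-middle group vanishes. If your identification of $C$ were correct, the argument would yield the equality $\VO=\DVO-1$, not an inequality. The slack arises only because $C$ fails to coincide with any $\ker U^i$; once you replace your claimed identification by the sandwich above, the remaining chase is indeed routine and matches the paper.
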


\begin{remark}
The inequalities are there because $C$ is not precisely $\ker \left(U|_{A^+}\right)^i$ for any $i$.  In fact, with the identification of $A^+$ with $\mathcal{C}_{\overline{K}}\{\max(i,j-g+1)\geq 1\}$, we have $\ker \left(U|_{A^+}\right)^m \subsetneq C \subsetneq \ker \left(U|_{A^+}\right)^{m+1}$.
\end{remark}

If we could find a fibred knot $K$ supporting $(M, \xi_K)$, such that $\cplusred(\xi_K) \neq 0$ and $\DVO(\xi_K) > 1$, then Theorem~\ref{vanishing order} would imply that for all sufficiently negative integers $n$, $\cplus(\xi_{\overline{K}_n}) = 0$, but $\chat(\xi_{\overline{K}_n}) \neq 0$.

\begin{question}
Do there exist examples of contact manifolds $(M, \xi)$ with $\DVO(\chat(\xi)) > 1$? Is there any geometric meaning to $\DVO(\chat(\xi))$?
\end{question}

\subsection{Genus-1 Open Books}
\label{subsec:genus1}

As yet no obstruction has been found for a non-planar contact manifold $(M, \xi)$ to be supported by a genus-$1$ open book decomposition.  In fact, it is currently unknown whether there exist any contact manifolds whose {\it support genus} ({\it ie.\ }minimal genus of a supporting open book decomposition, defined in \cite{EO:invariants}) is greater than 1.  To explore this, we look at properties of genus-1 open books with one boundary component.

Let $K\subset M$ be a fibred knot of genus 1.  A result of Honda, Kazez, and Matić (\cite{HKM:contactclass}), and independently Baldwin (\cite{Baldwin:genus1}), shows that $\xi_K$ is tight if and only if $\cplus(\xi_K)$ is non-vanishing.  Honda, Kazez, and Matić prove that this is equivalent to the monodromy $\phi_K$ of the open book having {\it fractional Dehn twist coefficient} $\rm{FDTC}(\phi_K)$ greater than 0, or greater-than-or-equal to 0 if $\phi_K$ is a periodic mapping class (see \cite{HKM:contactclass} for details on these terms).

\begin{thm}\label{thm:genus 1}
Let $K \subset M$ be a fibred knot of genus 1, and let $(M_{r}(K),\xi_{K_r})$ be the result of admissible transverse $r$-surgery on $K \subset (M,\xi_K)$, for $r < 0$. The following are equivalent:
\begin{enumerate}
\item $\cplusred(\xi_{\overline{K}}) \neq 0$ and $\DVO(\chat(\xi_{\overline{K}})) = 1$.
\item $\cplus(\xi_{K_r}) = 0$ for some $r < 0$.
\item $\chat(\xi_{K_r}) = 0$ for some $r < 0$.
\item $\cplus(\xi_{K_{-1}}) = 0$.
\item $\chat(\xi_{K_{-1}}) = 0$.
\item $\xi_{K_{-1}}$ is overtwisted.
\item \begin{enumerate}\item $\rm{FDTC}(\phi_{\overline{K}}) > 1$, if $\phi_{\overline{K}}$ is periodic. \item $\rm{FDTC}(\phi_{\overline{K}}) \geq 1$, if $\phi_{\overline{K}}$ is not periodic. \end{enumerate}
\end{enumerate}
\end{thm}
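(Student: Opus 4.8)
The plan is to establish the cycle of implications by first reducing everything to statements about $\cplus$ and $\chat$ via the genus-1 dichotomy, and then invoking Theorems~\ref{rationalmaintheorem} and~\ref{vanishing order} to connect these to invariants of $\xi_{\overline{K}}$. First I would observe that for a genus-1 fibred knot the result of Honda--Kazez--Matić / Baldwin cited just above the theorem tells us $\chat = 0 \iff \cplus = 0$ for \emph{any} genus-1 fibred knot's supported contact structure; however $\xi_{K_r}$ is only rationally fibred of genus $\geq 1$ in general, so I cannot apply this directly to $\xi_{K_r}$. Instead the key structural fact is that $\mathcal{C}_{\overline{K}}$ for a genus-1 fibred knot is very constrained: the Alexander grading takes values in $\{-1,0,1\}$, $H_*(\mathcal{C}_{\overline{K}}\{i=0,j=1\}) \cong \F$ (fibredness, Theorem~\ref{thm:fibred HFK}), and hence in Theorem~\ref{vanishing order} the complex $C$ with $m=1$ actually \emph{does} compute a single $\HF^{0,i}$-type group, collapsing the inequalities. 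So the first real step is to verify that for genus 1 the bounds in Theorem~\ref{vanishing order} are sharp, giving $\VO(\chat(\xi_{K_n})) = \DVO(\chat(\xi_{\overline{K}}))$ for $n \ll 0$, and a dual statement.

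Next I would run the equivalences. The implication (7)$\Leftrightarrow$(1) is Honda--Kazez--Matić's characterization of tightness/contact-class-nonvanishing via FDTC applied to $\phi_{\overline{K}}$ together with $\cplusred(\xi_{\overline{K}}) \neq 0 \iff$ (by Theorem~\ref{rationalmaintheorem}(2) with $\overline{K}$ in place of $K$, or directly from Corollary~\ref{FDTC > 1 tight} and its sharpenings) a FDTC bound; the condition $\DVO(\chat(\xi_{\overline{K}})) = 1$ is the borderline case separating $\cplus = 0$ from $\cplusred \neq 0$ for surgeries, so I'd match it to the boundary inequality in (7). The chain (2)$\Rightarrow$(3): if $\cplus(\xi_{K_r}) = 0$ then a fortiori this descends; more precisely, by the sharp version of Theorem~\ref{vanishing order}, $\cplus(\xi_{K_n}) = 0$ forces $\VO(\chat(\xi_{K_n})) < \infty$, and in the genus-1 situation $\VO = \DVO(\chat(\xi_{\overline{K}})) \leq 1$, which means $\VO(\chat(\xi_{K_n})) \leq 1$, i.e. $\VO = 0$, i.e. $\chat(\xi_{K_n}) = 0$. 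Then Lemma~\ref{lemma:admissible preserves non-vanishing} run contrapositively propagates the vanishing of $\chat$ from $r = -1$ outward and inward, giving (3)$\Rightarrow$(5) and (5)$\Rightarrow$(3), and similarly for $\cplus$ giving (2)$\Leftrightarrow$(4); (5)$\Leftrightarrow$(6) is again the genus-1 Honda--Kazez--Matić statement, but here I need $\xi_{K_{-1}}$ to be supported by a genus-1 open book — which it is, since $-1$-surgery on a genus-1 fibred knot with the page framing yields a genus-1 open book (capping/stabilization considerations, or directly: the monodromy becomes $\phi_K \circ \tau_\partial$, same page). Finally (1)$\Rightarrow$(2) is the forward direction of Theorem~\ref{vanishing order}: $\cplusred(\xi_{\overline{K}}) \neq 0$ with $\DVO = 1$ gives, for $n \ll 0$, $\VO(\chat(\xi_{K_n})) \leq \DVO(\chat(\xi_{\overline{K}})) = 1 < \infty$, so $\cplus(\xi_{K_n}) = 0$.

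The main obstacle I anticipate is the genus-1 collapse of Theorem~\ref{vanishing order} — specifically, confirming that when $g = 1$ and $m = 1$ the complex $C$ genuinely coincides with $\ker(U|_{A^+})^i$ for a single $i$ (the remark following Theorem~\ref{vanishing order} says $\ker(U|_{A^+})^1 \subsetneq C \subsetneq \ker(U|_{A^+})^2$ in general, so I need the genus-1 hypothesis to squeeze out the discrepancy). This requires a careful look at which $(i,j)$ boxes of $\mathcal{C}_{\overline{K}}$ are nonempty: for $g=1$ the box $\mathcal{C}_{\overline{K}}\{i = m, j = g+m\} = \mathcal{C}_{\overline{K}}\{i=1, j=2\}$ is empty (since $j - i = 1 = g$ is the top Alexander grading, but $\mathcal{C}_{\overline{K}}\{i=i_0\}$ is supported in $-g < j - i_0 < g$, i.e. $j - i_0 \in \{0\}$ only when $g=1$... wait, $-1 < j-i_0 < 1$ forces $j = i_0$), so that extra summand in $C$ disappears and $C = \mathcal{C}_{\overline{K}}\{\max(i,j) = 1\}$ becomes exactly $\ker(U|_{A^+})$. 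That's the crux; once it's pinned down, the rest is bookkeeping with the long exact sequences already set up in the proof of Theorem~\ref{rationalmaintheorem} and the duality Lemma~\ref{HF duality}. A secondary technical point is making sure the FDTC comparisons (periodic versus non-periodic cases in~(7)) line up exactly with the $\DVO = 1$ borderline rather than $\DVO = 0$ or $\DVO = \infty$; I'd handle this by tracing through Corollary~\ref{FDTC > 1 tight}'s proof and the Kazez--Roberts FDTC additivity under boundary Dehn twists to see precisely where the strict-versus-nonstrict inequality enters.
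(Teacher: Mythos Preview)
Your proposed ``genus-1 collapse'' of Theorem~\ref{vanishing order} does not hold, and this breaks the central implication $(2)\Rightarrow(3)$ in your scheme. The extra box $\mathcal{C}_{\overline{K}}\{i=1,j=2\}$ is \emph{not} empty: it sits at Alexander grading $j-i=1=g$, exactly the top grading where fibredness forces $H_*\cong\F$ (the strict inequalities ``$\mathrm{bottom}<j-i_0<\mathrm{top}$'' you quote from the paper are a typo for nonstrict ones; otherwise the top and bottom knot Floer groups would vanish, contradicting Theorem~\ref{thm:fibred HFK}). So $C$ genuinely sits strictly between $\ker U$ and $\ker U^2$ even when $g=1$, and you do not get $\VO(\chat(\xi_{K_n}))=\DVO(\chat(\xi_{\overline{K}}))$ on the nose. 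Even granting the collapse, your chain ``$\VO<\infty\Rightarrow\VO\leq 1\Rightarrow\VO=0$'' has no justification for either step. In effect, your $(2)\Rightarrow(3)$ is attempting to prove $\cplus=0\Rightarrow\chat=0$ for $\xi_{K_n}$, which is precisely the open problem discussed in Section~\ref{subsec:cplus versus chat}.

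The paper closes the loop by a different route that you are missing: it proves $(2)\Rightarrow(4)$ contrapositively by observing that $\xi_{K_{-n}}$ is obtained from $\xi_{K_{-1}}$ by inadmissible transverse $\tfrac{n}{n-1}$-surgery on the dual knot $K_{-1}$ (a short framing computation), and then invoking \cite[Theorem~1.6]{Conway}, which says inadmissible transverse $r$-surgery preserves $\cplus\neq 0$ once $r>2g-1$. The genus-1 hypothesis enters exactly here, since $2g-1=1<\tfrac{n}{n-1}$. From $(4)$ one gets $(5)$ and $(6)$ via the Honda--Kazez--Mati\'c/Baldwin dichotomy applied to the genus-1 open book $(\Sigma_{1,1},\phi_K\circ\tau_\partial)$ supporting $\xi_{K_{-1}}$, and $(5)\Rightarrow(3)\Rightarrow(2)$ are trivial. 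Your attempt to link $(7)$ directly with $(1)$ is also off: the clean connection is $(4)\Leftrightarrow(7)$, via that same open book together with $\mathrm{FDTC}(\phi_K\circ\tau_\partial)=\mathrm{FDTC}(\phi_K)+1=-\mathrm{FDTC}(\phi_{\overline{K}})+1$ and the FDTC tightness criterion; the $\DVO=1$ clause in $(1)$ is then recovered from $(3)\Rightarrow(1)$ via Theorem~\ref{vanishing order} (only the inequality $\DVO-1\leq\VO$ is needed there, together with $\cplusred\neq 0\Rightarrow\DVO\geq 1$).
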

\begin{proof}
$(1) \implies (2)$: This is Theorem~\ref{rationalmaintheorem}.

$(4) \iff (5) \iff (6)$: These follow from the discussion preceding the theorem.

$(5) \implies (3) \implies (2)$: These are trivial.

$(2) \implies (4)$: We show the contrapositive. We claim that inadmissible transverse $\frac{n}{n-1}$-surgery on $K_{-1}$, for $n > 1$ an integer, results in the same contact manifold as admissible transverse $(-n)$-surgery on $K$, namely $(M_{-n}(K),\xi_{K_{-n}})$ ({\it cf.\ }\cite[Section~3.3]{Conway}).  Indeed, let $\lambda$ be the $0$-framing on a neighbourhood of $K$, and let $\mu$ be a meridional curve for $K$.  Then the page of the open book for $K_{-1}$ also traces the curve $\lambda$ on the boundary of a neighbourhood of $K_{-1}$, but now the meridional curve is $\mu - \lambda$.  Inadmissible transverse $\frac{n}{n-1}$-surgery on $K_{-1}$, measured in terms of $\lambda$ and $\mu$, is $$n\cdot (\mu - \lambda) + (n-1)\cdot \lambda = n\cdot \mu - \lambda,$$ which is $(-n)$-surgery on $K$.  That the result of inadmissible transverse surgery $K_{-1}$ can be considered as a single transverse surgery on $K$ follows from the definition of transverse surgery.

By \cite[Theorem~1.6]{Conway}, if $\cplus(\xi_{K_{-1}}) \neq 0$, then inadmissible transverse $r$-surgery on $K_{-1}$ preserves this non-vanishing for all $r > 1$. In particular, $\cplus(\xi_{K_{-n}}) \neq 0$ for all integers $n > 0$.  By Lemma~\ref{lemma:admissible preserves non-vanishing}, this implies that $\cplus(\xi_{K_r}) \neq 0$ for all $r < 0$.

$(3) \implies (1)$: The only part of this that does not follow from Theorem~\ref{rationalmaintheorem} is the dual vanishing order of $\chat(\xi_{\overline{K}})$.  However, if $\chat(\xi_{K_r}) = 0$ some $r < 0$, then Lemma~\ref{lemma:admissible preserves non-vanishing} implies that $\chat(\xi_{K_n}) = 0$ for all sufficiently negative integers $n$, \textit{ie.\@} $\VO(\chat(\xi_{K_n})) = 0$.  Thus, by Theorem~\ref{vanishing order}, $\DVO(\chat(\xi_{\overline{K}})) \leq 1$.  However, since $\cplusred(\xi_{\overline{K}}) \neq 0$, we know that $\DVO(\chat(\xi_{\overline{K}})) \geq 1$, and so therefore it is equal to $1$.

$(4) \iff (7)$: Note that an open book decomposition for $(M_{-1}(K),\xi_{K_{-1}})$ is given by $(\Sigma_{1,1},\phi_K\circ\tau_{\bd})$, where $\Sigma_{1,1}$ is genus 1 surface with one boundary component, and $\tau_{\bd}$ is a positive Dehn twist about a curve parallel to $\bd\Sigma_{1,1}$.  Furthermore, $\rm{FDTC}(\phi_K \circ \tau_{\bd}) = \rm{FDTC}(\phi_K) + 1$ and $\rm{FDTC}(\phi_K) = -\rm{FDTC}(\phi_{\overline{K}})$, according to \cite{KR}.  Thus, the equivalence follows from the discussion preceding this theorem.
\end{proof}

Consider now a contact manifold $(M, \xi)$, supported by a genus-$1$ open book $(\Sigma, \phi)$, where $\cplusred(\xi) \neq 0$. We can cap off $(\Sigma, \phi)$ to an open book $(\Sigma_{1,1},\phi')$ with a single binding component that supports $(M',\xi')$, where $\cplusred(\xi') \neq 0$, $\DVO(\chat(\xi')) = 1$, and $\rm{FDTC}(\phi') > 1$, by Theorem~\ref{thm:genus 1}.

\begin{question}\label{genus1question}
By \cite[Theorem~1.2]{Baldwin:cappingoff}, we know that $\DVO(\chat(\xi)) \geq \DVO(\chat(\xi'))$.  Under what conditions can we say that $\DVO(\chat(\xi')) = 1$ implies $\DVO(\chat(\xi)) = 1$?  Under what conditions does $\rm{FDTC}(\phi') > 1$ imply that $\rm{FDTC}(\phi, B) > 1$ at every boundary component $B$ of $\Sigma$?
\end{question}

Even partial answers to Question~\ref{genus1question} could lead to obstructions to $\xi$ being supported by a genus-1 open book.

\bibliography{references}{}
\bibliographystyle{plain}
\end{document}